\definecolor{darkgreen}{rgb}{0.0, 0.2, 0.13}
\definecolor{darkpastelgreen}{rgb}{0.01, 0.75, 0.24}
\DeclareMathOperator*{\argmax}{arg\,max}
\DeclareMathOperator*{\erf}{erf}
\DeclareMathOperator*{\erfc}{erfc}
\DeclareMathOperator*{\acoth}{acoth}
\DeclareMathOperator*{\atan}{atan}
\DeclareMathOperator*{\cov}{\mathbb{C}ov}
\DeclareMathOperator*{\var}{\mathbb{V}ar}
\newtheorem{theorem}{Theorem}
\newtheorem{proposition}{Proposition}
\newtheorem{lemma}{Lemma}
\newtheorem{remark}{Remark}
\newtheorem{corollary}{Corollary}
\newcommand{\ind}[1]{\mbox{\rm\large  1}_{\{#1\}}}
\newcommand{\R}{\mathbb{R}}
\newcommand{\N}{\mathbb{N}}
\newcommand{\Z}{\mathbb{Z}}
\newcommand{\p}{\mathbb{P}}
\newcommand{\e}{\mathbb{E}}
\newcommand{\mathcalF}{\mathcal{E}}
\newcommand{\ep}{\varepsilon}
\newcommand{\eqd}{\overset{\rm d}{=}}
\newcommand{\convLs}{\overset{{\mathcal L}^2}{\to}}
\newcommand{\n}{{(n)}}
\newcommand{\QED}{\hfill $\Box$}
\newcommand{\Hspan}{[\underline{H}, \overline{H}]}
\renewcommand{\bar}{\overline}
\def\env@dmatrix{\hskip -\arraycolsep
  \let\@ifnextchar\new@ifnextchar
  \extrarowheight=2ex
  \array{*\c@MaxMatrixCols{>{\displaystyle}c}}}
\renewcommand{\tilde}{\widetilde}
\date{\today}
\begin{document}

\title[Derivatives of sup-functionals of fractional Brownian motion]{Derivatives of sup-functionals of fractional Brownian motion evaluated at $H=\tfrac{1}{2}$.}

\author[K.\ Bisewski]{Krzysztof Bisewski}
\address{Department of Actuarial Science, University of Lausanne, UNIL-Dorigny, 1015 Lausanne, Switzerland}
\email{Krzysztof.Bisewski@unil.ch}

\author[K. D\c{e}bicki]{Krzysztof D\c{e}bicki}
\address{Mathematical Institute, University of Wroc{\l}aw, pl. Grunwaldzki 2/4, 50-384 Wroc{\l}aw, Poland}
\email{Krzysztof.Debicki@math.uni.wroc.pl}

\author[T.\ Rolski]{Tomasz Rolski}
\address{Mathematical Institute, University of Wroc{\l}aw, pl. Grunwaldzki 2/4, 50-384 Wroc{\l}aw, Poland}
\email{Tomasz.Rolski@math.uni.wroc.pl}

\keywords{fractional Brownian motion; expected workload; Wills functional; Pickands constant; Piterbarg constant}
\subjclass[2020]{60G17, 60G22, 60G70}

\begin{abstract}
We consider a family of sup-functionals of (drifted) fractional Brownian motion with Hurst parameter
$H\in(0,1)$. This family includes, but is not limited to: expected value of the supremum, expected workload, Wills functional, and Piterbarg-Pickands constant. Explicit formulas for the derivatives of these functionals as functions of Hurst parameter evaluated at $H=\tfrac{1}{2}$ are established. In order to derive these formulas, we develop the concept of derivatives of fractional $\alpha$-stable fields introduced by Stoev \& Taqqu (2004) and propose Paley-Wiener-Zygmund representation of fractional Brownian motion.
\end{abstract}

\maketitle


\section{Introduction}\label{s:introduction}
Sup-functionals of the supremum of fractional Brownian motion
play crucial role in many problems arising both in theoretical probability and its applications, as in e.g. statistics, financial mathematics, risk theory, queueing theory, see e.g.
\cite{AdT07,DeM15, Man07, Pit96}.
Unfortunately, despite intensive research on their properties, apart from some particular cases --- reduced mostly to standard Brownian motion --- the exact value of such functionals is not known.

Let $ \{B_H (t): t \in \R_+ \} $ be fractional Brownian motion (fBm) with Hurst parameter $ H \in (0,1) $,
that is a centered Gaussian process with continuous sample paths a.s. and
\begin{equation}\label{eq:fbm_cov}
\cov(B_H(s), B_H(t)) = \tfrac{1}{2}\Big(|s|^{2H} + |t|^{2H} - |s-t|^{2H}\Big)
\end{equation}
for all $s,t\in\R_+$. In this manuscript, we consider the following two families of functionals 
\begin{equation}\label{def:MH_PH}
\mathscr M_H(T,a) := \e \Big(\sup_{t\in[0,T]} B_H(t)-a t\Big),
\quad \mathscr P_H(T,a) :=\e\Big(\exp\big\{\sup_{t\in[0,T]} \sqrt{2}B_H(t)-a t^{2H}\big\}\Big),
\end{equation}
where $a\in\R$ is the intensity of the drift and $T\in\R_+\cup\{\infty\}$ is the time horizon.
These functionals cover a range of interesting quantities in the
extreme value theory of Gaussian processes. In particular: 
\begin{itemize}
\item[(i)] For $a\in\R$ and $T>0$,
the quantity $\mathscr M_H(T,a)$ is the expected value of the workload in the
fluid queue with an fBm input at time $T$ under the assumption that at time $0$ the system starts off empty.
Analogously, if $a>0$ and $T=\infty$, then $\mathscr M_H(\infty,a)$
is the expected stationary workload of a queue with an fBm input, see e.g.
\cite{Man07, DeM15};
\item[(ii)] for $T>0$, the quantity $\mathscr H_H(T) := \mathscr P_H(T,1)$ is known as \emph{Wills functional} (\textit{truncated Pickands constant}),
see e.g. \cite{Vit96, DeH20} and references therein; 
\item[(iii)] for $a>1$, the quantity $\mathscr P_H(\infty,a)$ is known as \emph{Piterbarg constant}; see e.g.
\cite{PiP78, BDHL18} and references therein;
\item[(iv)] the quantity $\mathscr H_H := \lim_{T\to\infty} \frac{1}{T} \mathscr H_H(T)$ is known as \emph{Pickands constant};
see e.g. \cite{Pit96,PiP78}.
\end{itemize}

The values of functionals $\mathscr M_H(T,a)$ and $\mathscr P_H(T,a)$ are notoriously difficult to find in cases other than $H\in\{\tfrac{1}{2},1\}$.
Thus, most of the work is
focused on
finding upper and lower bounds for these quantities (see, e.g. \cite{Sha96,DMR03, DeK08,BDHL18, Bor17, Bor18, BDM21})
or determining their asymptotic behavior in various settings
(as $H$ goes to $0$, $H$ goes to $1$, $T$ grows large, $a$ goes to $0$ etc.);
see, e.g., \cite{harper2017, BDM21}.
We note that in many cases simulation methods do not help in estimation of $\mathscr P_H(T,a)$.
 For example, the second moment of $\mathscr P_H(\infty,a)$ does not exist when $a<2$,
 which makes it very difficult to assess error bounds in this case.
 When $H\to0$, then the approximation error resulted from simulation becomes overwhelming, see \cite{DiY14, DiM15}.

In recent years, Delorme, Wiese and others studied the behavior of the supremum of fractional Brownian motion and its time for $H\approx \tfrac{1}{2}$ using the perturbation theory \cite{DelormeEtAl2017Pickands, DelormeandWiese2016Extreme, DelormeWiese2015Maximum, delorme2016perturbative}. Our work was initially inspired by their result in \cite{DelormeEtAl2017Pickands}, where the following expansion for Pickands constant (see item (iv) above) was derived
\begin{equation}\label{eq:pickands_expansion}
\mathscr H_H = 1-2 (H-\tfrac{1}{2}) \gamma_{\rm E} + O ((H-\tfrac{1}{2}) ^ 2), \quad H\to\tfrac{1}{2}
\end{equation}
where $\gamma_{\rm E}$ is the Euler-Mascheroni constant.

The main goal of this contribution is to develop tools for researching expansions similar to \eqref{eq:pickands_expansion} for functionals introduced in Eq.~\eqref{def:MH_PH}. In particular, we find explicit formulas for the derivatives
of these functionals evaluated at $H=\tfrac{1}{2}$, i.e.
\begin{equation}\label{def:MH_PH_derivatives}
\mathscr M_{1/2}'(T,a) := \frac{\partial}{\partial H} \mathscr M_H(T,a)\Big|_{H=1/2} \quad \text{and} \quad \mathscr P_{1/2}'(T,a) := \frac{\partial}{\partial H}\mathscr  P_H(T,a)\Big|_{H=1/2}.
\end{equation}
For these purposes, we consider a probability space on which fractional Brownian motions with all values of
$H\in(0,1)$ are coupled in a non-trivial way. In particular, we consider the Mandelbrot \& van Ness' (MvN) fractional
Brownian motion introduced in the seminal work \cite {mandelbrot1968fractional};
see Eq.~\eqref{eq:def_X} below for precise definition. Following
\cite{peltier1995multifractional} the realizations of MvN field can be viewed as two-dimensional \emph{random surfaces} $(H,t)\mapsto B_H(t)$, as opposed to one-dimensional trajectories $t\mapsto B_H(t)$ for each fixed $H\in(0,1)$. While these random surfaces are non-differentiable in the time direction (with respect to $t$), they turn out to be smooth functions of the Hurst parameter (with respect to $H$). Therefore, following Stoev and Taqqu \cite{StoevTaqqu04, StoevTaqqu05} we define the derivatives of the MvN field with respect to the Hurst parameter $H$. Intuitively speaking, the concept of $H$-derivative of fBm allows us to rigorously write $\frac{\partial}{\partial H}\e B_H(\tau) = \e  \frac{\partial}{\partial H}B_H(\tau)$, where $\tau$ is some (well-behaved) random time and $\{\frac{\partial}{\partial H}B_H(t), t\in\R_+\}$ is a certain explicitly defined Gaussian process. It will turn out that in our context, the latter expression (i.e. $\e  \frac{\partial}{\partial H}B_H(\tau)$) is tractable (explicitly computable).

We note that Stoev and Taqqu considered a broader class of fractional $\alpha$-stable fields, while for the purposes of this paper we limit ourselves to the Gaussian case, i.e. $\alpha=2$, which corresponds to fractional Brownian motion and, in particular, the MvN field. See also \cite[Eq.~(1.9)]{StoevTaqqu05} with $(a_+,a_-) = (1,0)$ and \cite[Chapter~7.4]{samorodnitsky1994stable} for more information on fractional $\alpha$-stable fields. Focusing on the Gaussian case, we strengthen some of the results derived by the authors. In particular, we show sample path continuity of the derivative fields (Proposition~\ref{prop:PWZ_continuous}) and strengthen \cite[Lemma 4.1]{StoevTaqqu04} to almost sure convergence for all $(H,t)\in(0,1)\times\R_+$ (Proposition~\ref{prop:nth_derivative_limit_PWZ}). These propositions are then used in the proofs of main results.

Finally, we propose a Paley-Wiener-Zygmund (PWZ) representation of MvN field and its derivatives. We show that PWZ field is a continuous modification of MvN field (Proposition~\ref{prop:continuous_modification}) and the difference quotients of $n$-th derivative converge everywhere to $(n+1)$-st derivative almost surely (Proposition~\ref{prop:nth_derivative_limit_PWZ}). The PWZ representation is defined in terms of Riemann integrals, as opposed to stochastic integrals in the original Mandelbrot \& van Ness' definition. In the context of this manuscript this representation is more tractable and allows us to express the main quantities of interest, cf.~\eqref{def:MH_PH_derivatives}, as integrals of elementary functions, which we then calculate explicitly in special cases, see Section~\ref{s:functionals_supremum}.

The manuscript is organized as follows.
In Section~\ref{s:preliminaries} we define fractional Brownian motion through its MvN and PWZ representations. We also recall the facts related to the joint distribution of the supremum of drifted Brownian motion and its argmax. In particular, we introduce the conditional distribution of Brownian motion, conditioned on the value of the supremum and its argmax, which follows the law of the generalized 3-dimensional Bessel bridge.
In Section~\ref{sec:main}  we state main results of this paper. In Theorem 1 we give a formula for $\mathscr M'_{1/2}(T,a)$ and in Theorem 2 for $\mathscr P'_{1/2}(T,a)$ in terms of integrals of explicit elementary functions. More explicit formulae for
$\mathscr M'_{1/2}(T,0)$, $\mathscr M'_{1/2}(\infty,a)$ and $\mathscr P'_{1/2}(\infty,a)$ are given in  Corollary~\ref{cor:explicit_expressions}. Additionally, in Corollary~\ref{pit.in} we show that Piterbarg constants $\mathscr P_H(\infty,a)$ are monotone as functions of Hurst parameter. While this result is not directly related to our topic, it is a direct consequence of Proposition~\ref{pit.ine} and it might be of independent interest.
In Section~\ref{s:MvN} we define and examine the $H$-derivatives of fractional Brownian motion both with the use of MvN and PWZ representations.
In Section~\ref{s:proofs_main_theorems} we give the proofs of main theorems. Since they require quite a lot of different results, throughout this section we introduce various preliminary results.  Some of them, for the argmax of $B_H(t)-at$ and $B_H(t)-at^{2H}$ are presented in Proposition~\ref{prop:tau_uniform_tightness} and Proposition~\ref{prop:tau_continuity} respectively. More technical results are deferred to appendices.
In Appendix~\ref{appendixA} we show the equivalence between $\mathcal L^2$ and Paley-Wiener-Zygmund stochastic integrals. In Appendix~\ref{appendix:proofs} we present proofs of results from Section~\ref{s:preliminaries} and Section~\ref{s:MvN}. In Appendix~\ref{appendix:auxiliary_results} we state results needed in the proof of Theorem~\ref{thm:supremum_derivative_limit} and Theorem~\ref{thm:supremum_derivative_pickands_limit}, which can be also of independent interest such as monotonicity of Piterbarg constants and bounds on moments of the supremum of a Gaussian process at a random time. Finally, in Appendix~\ref{appendix:calculations} we write down all the calculations needed to prove Corollary~\ref{cor:explicit_expressions}.

\section{Preliminaries}\label{s:preliminaries}
Let $\{B(t) : t\in\R\}$ be a standard, two-sided Wiener process and let $\R_+ := [0,\infty)$.
In their seminal paper, \cite{mandelbrot1968fractional}, introduced a family of processes $\{X_H(t), t\in\R_+\}$ for $H\in(0,1)$, where
\begin{equation}\label{eq:def_X}
X_H(t) = \int_{-\infty}^0 \big[(t-s)^{H-\tfrac{1}{2}} - (-s)^{H-\tfrac{1}{2}}\big]{\rm d}B(s) + \int_0^t (t-s)^{H-\tfrac{1}{2}}{\rm d}B(s).
\end{equation}
For each $H\in(0,1)$, $\{X_H(t) : t\in\R_+\}$ is a centered Gaussian process with
\begin{align*}
\cov(X_H(s), X_H(t)) = V(H) \cdot \cov(B_H(s), B_H(t)),
\end{align*}
where, $B_H(t)$ is a fractional Brownian motion introduced in \eqref{eq:fbm_cov}, and
\begin{align*}
V(H) := \int_0^\infty \left((1+s)^{H-\tfrac{1}{2}}-s^{H-\tfrac{1}{2}}\right)^2{\rm d}s + \int_0^1 s^{2H-1}{\rm d}s = \frac{\Gamma(\tfrac{1}{2}+H)\Gamma(2-2H)}{2H\Gamma(\tfrac{3}{2}-H)},
\end{align*}
see e.g. \cite[Appendix A]{mishura2008stochastic},
where the explicit formula for $V(H)$ is derived.
This shows that, up to the scaling factor, for each $H\in(0,1)$, process $\{X_H(t):t\in\R_+\}$ is a fractional Brownian motion. Therefore, we call $\{X_H(t):(H,t)\in(0,1)\times\R_+\}$ the \textit{Mandelbrot \& van Ness} (MvN) fractional Brownian field. At the same time, there exists another representation $\{\tilde X_H(t) : (H,t)\in(0,1)\times\R_+\}$ of MvN field,
\begin{equation}\label{def:PWZ_n0}
\begin{split}
\tilde X_H(t) & := (H-\tfrac{1}{2})\int_{-\infty}^0 \Big[(t-s)^{H-\tfrac{3}{2}}-(-s)^{H-\tfrac{3}{2}}\Big]B(s){\rm d}s + t^{H-\tfrac{1}{2}}B(t)\\
&\quad - (H-\tfrac{1}{2})\int_0^t (t-s)^{H-\tfrac{3}{2}}\big(B(t)-B(s)\big){\rm d}s,
\end{split}
\end{equation}
which we call the \textit{Paley-Wiener-Zygmund} (PWZ) representation. In Section~\ref{s:MvN} it is shown that field $\{\tilde X_H(t) : (H,t)\in(0,1)\times\R_+\}$ is a modification of MvN field, whose sample paths $(H,t)\mapsto \tilde X_H(t)$ are continuous almost surely; see Proposition~\ref{prop:continuous_modification}.

From now on, the fractional Brownian motion is defined through the process
$X_H(\cdot)$, i.e. with $D(H) := (V(H))^{-1/2}$, we put
\begin{equation}\label{eq:fbm}
B_H(t) := D(H) \cdot X_H(t).
\end{equation}
For any $a\in\R$ we define
\begin{equation*}
Y_H(t; a) := B_H(t) - a t, \quad \text{and} \quad Y^*_H(t; a) := B_H(t) - at^{2H}.
\end{equation*}
Notice that $Y_{1/2}^* = Y_{1/2}$ for all $a\in\R$. Furthermore, we define suprema and their locations of these processes
\begin{align*}
\overline Y_H(T, a) & := \sup_{t\in[0,T]} Y_H(t;a), \quad \tau_H(T,a) := \argmax\{t\in[0,T] : Y_H(t; a)\}, \\
\overline Y^*_H(T, a) & := \sup_{t\in[0,T]} Y^*_H(t;a), \quad \tau^*_H(T,a) := \argmax\{t\in[0,T] : Y^*_H(t; a)\}.
\end{align*}
It is known that when $a\in\R$ and $T<\infty$, or $a>0$ and $T=\infty$ then $\overline Y$, $\overline Y^*$ are almost surely finite and $\tau_H$ and $\tau^*_H$ are well-defined (unique) and almost surely finite; see \cite{Ferger1999uniqueness} citing \cite{Lifshits1982absolute}.

Next, we recall the formulae for the joint density of the supremum of (drifted) Brownian motion over $[0,T]$ and its argmax. In the following, for $z\in\R$ we define error function and complementary error function respectively:
\begin{align*}
\erf(z) := \frac{2}{\sqrt{\pi}} \int_0^z e^{-t^2}{\rm d}t, \quad \erfc(z) := 1-\erf(z).
\end{align*}

For brevity of exposition, in this section we write $Y(t) := Y_{1/2}(t;a)$, $\overline Y(T) = \overline Y_{1/2}(T,a)$, and $\tau(T) := \tau_{1/2}(T,a)$. When $a\in\R$ and $T>0$, according to  \cite{shepp1979joint} we have:
\begin{align*}
\frac{\p(\tau(T)\in{\rm d}t, \bar Y(T)\in{\rm d}y, Y(T)\in{\rm d}x)}{{\rm d}t\,{\rm d}y\,{\rm d}x} = \frac{1}{\pi} \frac{y(y-x)}{t^{3/2}(T-t)^{3/2}}\exp\left\{-\frac{y^2}{2t} - \frac{(y-x)^2}{2(T-t)}\right\}e^{-ax - a^2T/2}.
\end{align*}
After the integration of the above density with respect to $x$ over the domain $x<y$ we recover the joint density of the pair $(\tau(T), \bar Y(T))$
\begin{align}\nonumber
p(t,y; T,a) & := \frac{\p(\tau(T)\in{\rm d}t, \bar Y(T)\in{\rm d}y)}{{\rm d}t\,{\rm d}y}\\
\label{eq:density:T} & = \frac{ye^{-\tfrac{(y+ta)^2}{2t}}}{\sqrt{\pi}t^{3/2}} \cdot\left(\frac{e^{-a^2(T-t)/2}}{\sqrt{\pi(T-t)}} + \frac{a}{\sqrt{2}}\cdot \erfc\big(-a\sqrt{\tfrac{T-t}{2}}\big)\right)
\end{align}
for $(t,y)\in(0,T)\times\R_+$; see also \cite[2.1.13.4]{borodin2002handbook}. When $a>0$, then the pair $(\tau(\infty), \bar Y(\infty))$ is well-defined, with density
\begin{equation}\label{eq:density:infty}
p(t,y;\infty,a) = \frac{\sqrt{2}\,aye^{-\tfrac{(y+ta)^2}{2t}}}{\sqrt{\pi}t^{3/2}}
\end{equation}
for $(t,y)\in\R^2_+$; see e.g. \cite[2.1.13.4(1)]{borodin2002handbook}.

When $a\in\R$ and $T>0$ or $a>0$ and $T=\infty$ then, conditionally on $\bar Y(T) = y, \tau(T) = t$, the process $\{Y(t)-Y(t-s)\}_{s\in[0,t]}$ has the law of the \textit{3-dimensional Bessel bridge from $(0,0)$ to $(t,y)$}, see e.g. \cite[Prop.~2]{asmussen1995discretization}. The law of this process does not dependent on the value of the drift $a$ nor on the time horizon $T$. Moreover, the density of the marginal distribution of this process is known. In the following, for $t,y>0$ and $s\in(0,t)$ we define
\begin{equation*}
g(x,s;t,y) := \frac{\p(Y(t)-Y(t-s)\in{\rm d}x \mid \tau(T)=t, \bar Y(T)=y)}{{\rm d}x}
\end{equation*}
Then, according to e.g. \cite[Theorem 1(ii)]{imhof1984density} we have
\begin{equation}\label{eq:W_dens}
g(x,s;t,y)  = \frac{\frac{x}{s^{3/2}} \exp\{-\frac{x^2}{2s}\}}{\frac{y}{t^{3/2}}\exp\{-\frac{y^2}{2t}\}} \cdot \frac{1}{\sqrt{2\pi(t-s)}}\left[e^{-\frac{(x-y)^2}{2(t-s)}} - e^{-\frac{(x+y)^2}{2(t-s)}} \right]\ind{x < y}.
\end{equation}
The following functional of the 3-dimensional Bessel bridge from $(0,0)$ to $(t,y)$ will be important later on
\begin{equation}\label{def:I}
I(t,y) := \e \left(\int_0^t \frac{Y(t)-Y(t-s)}{s}{\rm d}s \mid \tau(T) = t, \overline Y(T) = y\right).
\end{equation}
Using the fact that we have explicit formula for the density $g(\cdot)$ in Eq.~\eqref{eq:W_dens}, we can express $I(t,y)$ as a double integral, see the proposition below, whose prove is given in Appendix~\ref{appendix:proofs}.
\begin{proposition}\label{prop:I(t,y)}
For any $t,y>0$ we have
\begin{equation*}
I(t,y) := \sqrt{\frac{2}{\pi}} \cdot ty^{-1} \int_0^\infty\int_0^\infty \frac{x^2}{q(1+q^2)^2}\left(e^{-(x-yq/\sqrt{t})^2/2} - e^{-(x+yq/\sqrt{t})^2/2}\right){\rm d}x{\rm d}q.
\end{equation*}
\end{proposition}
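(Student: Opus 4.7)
The plan is to insert the explicit Bessel bridge density \eqref{eq:W_dens} into the definition \eqref{def:I} of $I(t,y)$ (applying Fubini, justified by non-negativity of the integrand), and then transform the resulting double integral via a single joint change of variables in $(s,x)$ that simultaneously normalizes the Gaussian exponents and collapses the prefactor into the target form.

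I would first simplify $g$ using the completion-of-squares identity
\begin{equation*}
-\frac{x^2}{2s} - \frac{(x \mp y)^2}{2(t-s)} \;=\; -\frac{y^2}{2t} \;-\; \frac{t(x \mp sy/t)^2}{2s(t-s)},
\end{equation*}
which cancels the $e^{y^2/(2t)}$ factor and recasts $g$ as a scaled difference of Gaussians centered at $\pm sy/t$ with common variance $s(t-s)/t$. This reduces the problem to evaluating
\begin{equation*}
I(t,y) \;=\; \int_0^t\!\int_0^\infty \frac{x^2\,t^{3/2}}{y s^{5/2}\sqrt{2\pi(t-s)}}\Big[e^{-t(x-sy/t)^2/(2s(t-s))} - e^{-t(x+sy/t)^2/(2s(t-s))}\Big]\,dx\,ds.
\end{equation*}

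The heart of the proof is then the substitution
\begin{equation*}
s \;=\; \frac{tq^2}{1+q^2}, \qquad x \;=\; \frac{\sqrt{t}\,q\,u}{1+q^2}, \qquad (q,u)\in(0,\infty)\times(0,\infty),
\end{equation*}
whose Jacobian is $2t^{3/2}q^2/(1+q^2)^3$. Using $t-s = t/(1+q^2)$ and $s(t-s) = t^2q^2/(1+q^2)^2$, direct algebra reduces the exponents to precisely $(u \mp qy/\sqrt{t})^2/2$, while bookkeeping the powers of $t$, $q$ and $1+q^2$ in the prefactor, Jacobian, and $\sqrt{t-s}$ term combines to exactly $\sqrt{2/\pi}\cdot tu^2/(yq(1+q^2)^2)$, yielding the claimed formula.

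The only real obstacle is identifying the correct orientation of the substitution: since the target features $yq/\sqrt{t}$ (rather than $y/(q\sqrt{t})$) inside the Gaussians, one must take $s = tq^2/(1+q^2)$ rather than $s = t/(1+q^2)$, and the scaling for $x$ must be $\sqrt{t}q/(1+q^2)$ (which is inverse to the natural standardization by the Gaussian standard deviation $\sqrt{s(t-s)/t}$) so that the mean of the shifted Gaussian lands at $\pm qy/\sqrt{t}$ rather than $\pm y/(q\sqrt{t})$. Once the substitution is chosen, the remaining algebra is elementary power counting.
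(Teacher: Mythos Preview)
Your proposal is correct and follows essentially the same route as the paper: complete the square in the Bessel-bridge density, then change variables to $(q,u)$ with $q=\sqrt{s/(t-s)}$ and $u=x/\sigma_0$ where $\sigma_0=\sqrt{s(t-s)/t}$. The paper performs this as three sequential substitutions ($x\mapsto\sigma_0 x$, then $s\mapsto tw$, then $w\mapsto q^2/(1+q^2)$) while you compose them into a single joint substitution, but the transformations coincide; note incidentally that your scaling $x=\tfrac{\sqrt{t}q}{1+q^2}\,u$ \emph{is} the standardization by $\sigma_0$, not its inverse, so your parenthetical remark is slightly off though the algebra is right.
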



\section{Main results}\label{sec:main}
\label{s:functionals_supremum}


Recall the definitions of functionals $\mathscr M_H$ and $\mathscr P_H$ introduced in the Introduction in Eq.~\eqref{def:MH_PH}. We can rewrite them in terms of the expectation of random variables $\overline Y$ and $\overline Y^*$, i.e
\begin{equation*}
\mathscr M_H(T,a) = \e \Big(\overline Y_H(T,a)\Big), \quad \mathscr P_H(T,a) = \e \Big(\exp\{\sqrt{2}\cdot\overline Y^*_H(T,\tfrac{a}{\sqrt{2}})\}\Big).
\end{equation*}
We shall derive formulas for the derivative of functions $H\mapsto \mathscr M_H(T,a)$ and $H\mapsto \mathscr P_H(T,a)$ evaluated at $H=\tfrac{1}{2}$. Following Eq.~\eqref{def:MH_PH_derivatives}, in what follows let
\begin{align*}
\mathscr M'_{1/2}(T,a) := \lim_{H\to1/2} \frac{\mathscr M_H(T,a) - \mathscr M_{1/2}(T,a)}{H-\tfrac{1}{2}}, \quad \mathscr P'_{1/2}(T,a) := \lim_{H\to1/2} \frac{\mathscr P_H(T,a) - \mathscr P_{1/2}(T,a)}{H-\tfrac{1}{2}}.
\end{align*}

\begin{theorem}\label{thm:supremum_derivative_limit}
If $a\in\R$ and $T>0$ or $a>0$ and $T=\infty$, then
\begin{equation}\label{eq:supremum_derivative_limit_integral}
\mathscr M'_{1/2}(T,a) = \int_0^T\int_0^\infty\Big(y(1+\log(t)) + at\log(t) - I(t,y){\rm d}s\Big)p(t,y;T,a) {\rm d}y{\rm d}t.
\end{equation}
\end{theorem}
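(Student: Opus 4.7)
The plan is to identify $\mathscr M'_{1/2}(T,a)$ with $\e\bigl[\partial_H B_H(\tau_{1/2})|_{H=1/2}\bigr]$ via an envelope argument, compute the pointwise $H$-derivative of $B_H$ through the PWZ representation, and then evaluate the resulting expectation by conditioning on the joint law of $(\tau_{1/2},\bar Y_{1/2})$.

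\emph{Envelope identity.} Writing $\bar Y_H(T,a) = B_H(\tau_H) - a\tau_H$ with $\tau_H := \tau_H(T,a)$ a.s.\ uniquely defined, and using that the drift $-at$ does not depend on $H$, the two trivial inequalities $\bar Y_H \ge B_H(\tau_{1/2}) - a\tau_{1/2}$ and $\bar Y_{1/2} \ge B_{1/2}(\tau_H) - a\tau_H$ yield the sandwich
\[
B_H(\tau_{1/2}) - B_{1/2}(\tau_{1/2}) \;\le\; \bar Y_H(T,a) - \bar Y_{1/2}(T,a) \;\le\; B_H(\tau_H) - B_{1/2}(\tau_H).
\]
Dividing by $H-1/2$ and combining a.s.\ $H$-smoothness of the MvN field (Proposition~\ref{prop:nth_derivative_limit_PWZ}) with the a.s.\ continuity $\tau_H \to \tau_{1/2}$ as $H \to 1/2$ (Proposition~\ref{prop:tau_uniform_tightness} together with a standard argmax continuous-mapping argument), the squeeze theorem gives $(\bar Y_H - \bar Y_{1/2})/(H-1/2) \to \partial_H B_H(\tau_{1/2})|_{H=1/2}$ almost surely.

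\emph{Pointwise derivative and conditional expectation.} From $B_H(t) = D(H) X_H(t)$ with $D(H) = V(H)^{-1/2}$, a short digamma calculation applied to the explicit formula for $V$ gives $V(1/2) = 1$, $V'(1/2) = -2$, hence $D(1/2) = D'(1/2) = 1$; and since $X_{1/2}(t) = B(t)$, termwise differentiation of the PWZ representation \eqref{def:PWZ_n0} at $H=1/2$ (justified rigorously by Proposition~\ref{prop:nth_derivative_limit_PWZ}) produces
\[
\left.\frac{\partial}{\partial H}B_H(t)\right|_{H=1/2} = B(t)(1 + \log t) + \int_{-\infty}^{0}\!\bigl[(t-s)^{-1} - (-s)^{-1}\bigr]B(s)\,{\rm d}s - \int_0^t \frac{B(t) - B(t-s)}{s}\,{\rm d}s.
\]
Condition on $\{\tau_{1/2} = t,\,\bar Y_{1/2} = y\}$. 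The past integral has conditional expectation zero, by Fubini and independence of $B|_{(-\infty,0)}$ from $B|_{[0,T]}$ and hence from $(\tau_{1/2},\bar Y_{1/2})$. On this event $B(t) = y + at$ is determined, and the time-reversed process $\{B(t) - B(t-s)\}_{s\in[0,t]}$ is a $3$-dimensional Bessel bridge from $(0,0)$ to $(t,y)$ (Section~\ref{s:preliminaries}); using $B(t) - B(t-s) = \bigl(Y(t) - Y(t-s)\bigr) + as$ together with \eqref{def:I}, the last conditional expectation equals $I(t,y) + at$. Collecting terms gives a conditional mean of $(y+at)(1+\log t) - I(t,y) - at = y(1+\log t) + at\log t - I(t,y)$, and integrating against the joint density $p(t,y;T,a)$ from \eqref{eq:density:T} (or \eqref{eq:density:infty} when $T=\infty$) yields \eqref{eq:supremum_derivative_limit_integral}.

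\emph{Main obstacle.} The delicate part is passing from a.s.\ convergence of the difference quotients to convergence of their expectations, i.e.\ interchanging $\lim_{H\to 1/2}$ with $\e$. I would dominate the sandwiched difference quotient by $\sup_{H \in (1/2-\delta,\,1/2+\delta)}|B_H(\tau_H) - B_{1/2}(\tau_H)|/|H-1/2|$ and apply the uniform moment bounds on suprema of Gaussian processes evaluated at random times collected in Appendix~\ref{appendix:auxiliary_results}; the case $T = \infty$, where the argmax $\tau_H$ can be heavy-tailed, will require the most care and is the reason those appendix estimates are needed in full generality.
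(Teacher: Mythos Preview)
Your proof is correct and follows essentially the paper's strategy: reduce $\mathscr M'_{1/2}(T,a)$ to $\e\bigl[X(\tau_{1/2})+X^{(1)}(\tau_{1/2})\bigr]$, then evaluate by conditioning on $(\tau_{1/2},\bar Y_{1/2})$, using independence of $B|_{(-\infty,0)}$ and the Bessel-bridge description; your conditional-mean computation $(y+at)(1+\log t)-I(t,y)-at=y(1+\log t)+at\log t-I(t,y)$ matches the paper exactly. The one genuine difference is in how you reach the first identity. The paper writes $\bar Y_H-\bar Y_{1/2}=[Y_H(\tau_H)-Y_H(\tau_{1/2})]+[Y_H(\tau_{1/2})-Y_{1/2}(\tau_{1/2})]$ and shows directly that the first (nonnegative) bracket, divided by $H-\tfrac12$, has expectation tending to zero by expanding $X_H$ to second order and bounding three pieces $J_1,J_2,J_3$ separately; your two-sided envelope sandwich bypasses that expansion and delivers the almost-sure limit by squeezing, which is a bit cleaner. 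In either packaging the passage to expectations uses the same machinery---Proposition~\ref{prop:tau_uniform_tightness} and Corollary~\ref{coro:J3} to bound $\e\bigl|R_H(\tau_H;1)/(H-\tfrac12)\bigr|^2$ uniformly in $H$, then de~la~Vall\'ee--Poussin---so your ``dominate by the $\sup_H$'' phrasing should be read as uniform-in-$H$ higher-moment control rather than a single $H$-free dominant; your pointer to Appendix~\ref{appendix:auxiliary_results} is precisely the right toolkit.
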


\begin{theorem}\label{thm:supremum_derivative_pickands_limit}
If $a\in\R$ and $T>0$ or $a>1$ and $T=\infty$, then
\begin{equation}\label{eq:supremum_derivative_pickands_limit_integral}
\mathscr P'_{1/2}(T,a) = \int_0^T\int_0^\infty\sqrt{2}\Big(y(1+\log(t)) - \tfrac{a}{\sqrt{2}}\,t\log(t) - I(t,y)\Big)e^{\sqrt{2}y}p(t,y;T,\tfrac{a}{\sqrt{2}}) {\rm d}y{\rm d}t.
\end{equation}
\end{theorem}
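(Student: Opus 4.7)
The plan is to mirror the blueprint of Theorem~\ref{thm:supremum_derivative_limit}, adjusted for the exponential prefactor $e^{\sqrt{2}\bar Y^*_H}$ and for the fact that the drift $at^{2H}$ itself depends on $H$. I set $c:=a/\sqrt{2}$, so that $\mathscr P_H(T,a)=\e\bigl[\exp\bigl(\sqrt{2}\,\bar Y^*_H(T,c)\bigr)\bigr]$, work throughout on the MvN coupling from~\eqref{eq:def_X}, and write $\tau^*_H=\tau^*_H(T,c)$. The goal is to analyse the difference quotient
\begin{equation*}
Q_H := \frac{\exp(\sqrt{2}\,\bar Y^*_H(T,c))-\exp(\sqrt{2}\,\bar Y^*_{1/2}(T,c))}{H-\tfrac{1}{2}}
\end{equation*}
first pathwise and then in $L^1$, so that $\mathscr P'_{1/2}(T,a)=\lim_{H\to 1/2}\e[Q_H]=\e[\lim_{H\to 1/2}Q_H]$.

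For the pathwise limit I would use the sandwich obtained by inserting the suboptimal argmaxes into each supremum,
\begin{equation*}
B_H(\tau^*_{1/2})-c(\tau^*_{1/2})^{2H}-\bar Y^*_{1/2}(T,c) \leq \bar Y^*_H(T,c)-\bar Y^*_{1/2}(T,c) \leq B_H(\tau^*_H)-c(\tau^*_H)^{2H}-B(\tau^*_H)+c\tau^*_H,
\end{equation*}
together with the identity $\bar Y^*_{1/2}(T,c)=B(\tau^*_{1/2})-c\tau^*_{1/2}$. Applying the mean value theorem to $x\mapsto e^{\sqrt{2}x}$, dividing by $H-\tfrac{1}{2}$, and invoking the a.s.\ continuity of the MvN surface (Proposition~\ref{prop:PWZ_continuous}) together with the continuity of $H\mapsto\tau^*_H$ at $H=\tfrac{1}{2}$ (Proposition~\ref{prop:tau_continuity}), both bounds share the same almost-sure limit; writing $\tau:=\tau_{1/2}(T,c)$,
\begin{equation*}
Q_H\ \longrightarrow\ e^{\sqrt{2}\,\bar Y^*_{1/2}(T,c)}\cdot\sqrt{2}\Bigl[\partial_H B_H(\tau)\big|_{H=1/2}\ -\ 2c\,\tau\log\tau\Bigr]\quad\text{a.s.}
\end{equation*}

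The main obstacle is to promote this to convergence in $L^1$. I would construct an integrable dominating function uniformly in $H$ on a small neighbourhood of $\tfrac{1}{2}$, relying on the moment estimates from Appendix~\ref{appendix:auxiliary_results} to control $\e[\exp(\sqrt{2}(1+\epsilon)\bar Y^*_H(T,c))]$ multiplied by the polynomial-type factor coming from $\partial_H B_H$. When $T<\infty$ this is a Fernique-type bound made uniform by the MvN coupling; when $T=\infty$, the hypothesis $a>1$ is exactly what keeps the Piterbarg-style exponential moment finite on a whole neighbourhood of $H=\tfrac{1}{2}$, and Proposition~\ref{pit.ine} combined with the monotonicity of Piterbarg constants from Corollary~\ref{pit.in} supplies the required uniformity.

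Once $L^1$ convergence is secured, the final expression follows by explicit computation. The PWZ representation~\eqref{def:PWZ_n0} gives $\partial_H B_H(t)|_{H=1/2}=B(t)(1+\log t)+\Psi(t)$, where $\Psi(t)$ is the pathwise Riemann integral obtained by differentiating the right-hand side of~\eqref{def:PWZ_n0} term-by-term and setting $H=\tfrac{1}{2}$. Since $B(\tau)=\bar Y^*_{1/2}(T,c)+c\tau$, the prefactor $\exp\{\sqrt{2}B(\tau)-a\tau\}$ collapses to $e^{\sqrt{2}y}$ under conditioning on $(\tau,\bar Y^*_{1/2})=(t,y)$. The negative-time piece of $\Psi$ is conditionally mean-zero by independence of $\{B(s):s<0\}$ from $\sigma(B(u):u\ge 0)$; the positive-time piece, rewritten via $s\mapsto\tau-s$ as $\int_0^\tau R(s)/s\,ds$, is the Bessel-bridge functional from~\eqref{def:I} with conditional mean $I(\tau,y)$ (Proposition~\ref{prop:I(t,y)}). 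Assembling these pieces and integrating against $p(t,y;T,c)$ from~\eqref{eq:density:T} (or~\eqref{eq:density:infty} in the $T=\infty$ regime) yields~\eqref{eq:supremum_derivative_pickands_limit_integral}.
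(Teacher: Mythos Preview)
Your plan is essentially the same as the paper's: establish pointwise convergence of the difference quotient on the MvN coupling, upgrade to $L^1$ by uniform integrability using Proposition~\ref{pit.ine} and the $R_H$-remainder bounds, then evaluate via the PWZ formula for $X^{(1)}$ and the Bessel-bridge functional $I(t,y)$. The only organizational difference is that the paper splits
\[
\frac{e^{Z_H(\tau^*_H)}-e^{Z(\tau^*)}}{H-\tfrac12}
=\frac{e^{Z_H(\tau^*_H)}-e^{Z_H(\tau^*)}}{H-\tfrac12}
+\frac{e^{Z_H(\tau^*)}-e^{Z(\tau^*)}}{H-\tfrac12}
\]
and shows the first term has mean tending to $0$ while the second converges to the stated limit, whereas you squeeze the full quotient between two bounds coming from the suboptimal argmaxes and pass to the limit directly. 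Both routes rely on exactly the same ingredients (Propositions~\ref{prop:PWZ_continuous}, \ref{prop:nth_derivative_limit_PWZ}, \ref{prop:tau_continuity}, \ref{pit.ine} and Corollary~\ref{coro:J3}); the paper's split has the minor advantage that the first summand is nonnegative, which streamlines the bounding, while your sandwich is slightly quicker for the pointwise limit.

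The one place where your write-up is thin is the $L^1$ step. Speaking of a ``dominating function'' is not quite right: there is no single integrable majorant here. What works is uniform integrability via de la Vall\'ee--Poussin, and for that you must bound a $(1+\varepsilon)$-moment of $|Q_H|$ uniformly in $H$. After the mean-value bound $|Q_H|\le \sqrt{2}\bigl|\tfrac{\bar Y^*_H-\bar Y^*_{1/2}}{H-1/2}\bigr|\,e^{\sqrt{2}\max(\bar Y^*_H,\bar Y^*_{1/2})}$ you need a H\"older split: the exponential factor is controlled by Proposition~\ref{pit.ine} provided you choose $p>1$ and $\varepsilon>0$ with $(1+\varepsilon)p<a$ (this is where $a>1$ enters when $T=\infty$), and the polynomial factor requires the moment bounds of Corollary~\ref{coro:J3}(ii) applied to $R_H(\tau^*_H;1)$ and $R_H(\tau^*;1)$, together with a separate estimate for the drift piece $\tfrac{t-t^{2H}}{H-1/2}$. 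These are precisely the computations the paper carries out; once you make them explicit your argument is complete.
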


In order to gain a better intuitive understanding of the main results, we provide an outline of the proof of Theorem~\ref{thm:supremum_derivative_limit} below;
the proof of Theorem~\ref{thm:supremum_derivative_pickands_limit} will be similar. Full proofs of these theorems are  given in Section~\ref{s:proofs_main_theorems}.

{\it Outline of the proof of Theorem~\ref{thm:supremum_derivative_limit}.} The main part of the proof is to show that
\begin{align*}
\frac{\partial}{\partial H} \Big[\e\, Y_H(\tau_{H}(T,a),a)\Big]\Big\vert_{H=1/2} = \e \left[\frac{\partial}{\partial H}Y_H(\tau_{1/2}(T,a),a)\Big\vert_{H=1/2}\right],
\end{align*}
where the expression on the left is, by definition, equal to $\mathscr M'_{1/2}(T,a)$. To explain in words, we may swap the order of taking the expected value and differentiation in the definition of $\mathscr M'_{1/2}(T,a)$ and swap $\tau_H(T,a)$ with $\tau_{1/2}(T,a)$ above. The derivative $\frac{\partial}{\partial H}Y_H(t,a)$ is understood point-wise, for every fixed $t\in\R_+$. In proofs we will need an $H$-calculus, which is formally introduced and worked out in Section~\ref{s:MvN}. Here we need only the first derivative
$$
X_H^{(1)}(t) = \int_{-\infty}^0 \big[\log(t-s)(t-s)^{H-\tfrac{1}{2}} - \log(-s)(-s)^{H-\tfrac{1}{2}}\big]{\rm d}B(s) + \int_0^t \log(t-s)(t-s)^{H-\tfrac{1}{2}}{\rm d}B(s).$$
In $H$-calculus, the  Leibniz formula is valid and the $H$-derivative at $H=1/2$ of the fBm  $\frac{\partial}{\partial H}B_H(t)$ is $X(t)+X^{(1)}(t)$. This is derived later in Section~\ref{s:MvN}, see \eqref{eq:Bn_linear_combination}.
As soon as the equation above is established, we find that
\begin{equation*}
\frac{\partial}{\partial H}Y_H(\tau_{1/2}(T,a),a)\Big\vert_{H=1/2} = X(\tau) + X^{(1)}(\tau),
\end{equation*}
where $\tau := \tau_{1/2}(T,a)$. Finally, $\mathscr M'_{1/2}(T,a)$ is equal to the expectation of the expression above
and it can be expressed as the definite integral in Eq.~\eqref{eq:supremum_derivative_limit_integral}
using PWZ representations of $X$ and $X^{(1)}$ and the fact that distribution of
Brownian motion conditioned on its supremum and time of supremum is known; see
Section~\ref{s:preliminaries} for more information. \QED\\

We note that the derivatives $\mathscr M_{1/2}'(T,a)$ and $\mathscr P_{1/2}'(T,a)$ in \eqref{eq:supremum_derivative_limit_integral}
and \eqref{eq:supremum_derivative_pickands_limit_integral} are expressed  as definite integrals.
Thus, they can be computed numerically for any drift $a$ and time horizon $T$,
which satisfy the assumptions of Theorem~\ref{thm:supremum_derivative_limit} and
Theorem~\ref{thm:supremum_derivative_pickands_limit} respectively.
In addition, we were able to calculate these derivatives explicitly in special cases; see the corollary below.
\begin{corollary}\label{cor:explicit_expressions} It holds that
\begin{itemize}
\item[(i)] if $T>0$, then $\displaystyle \mathscr M'_{1/2}(T,0) = \sqrt{\frac{2T}{\pi}} \cdot (\log(T)-2)$,
\item[(ii)] if $a>0$, then $\displaystyle \mathscr M'_{1/2}(\infty,a) = -\frac{1}{a}\left(\gamma_{{\rm E}} +\log(2a^2)\right)$,
\item[(iii)] if $a > 1$, then  $\displaystyle \mathscr P'_{1/2}(\infty,a) = -\frac{2a}{a-1} \Big( 1 + (a-2)\log\big(\tfrac{1-a}{a})\Big)$.
\end{itemize}
\end{corollary}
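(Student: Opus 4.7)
The plan is to apply Theorem~\ref{thm:supremum_derivative_limit} and Theorem~\ref{thm:supremum_derivative_pickands_limit} in the three specific settings, substitute the explicit densities from \eqref{eq:density:T}--\eqref{eq:density:infty}, expand the $I(t,y)$-contribution using Proposition~\ref{prop:I(t,y)}, and finally evaluate the resulting integrals via standard Gaussian moment identities and derivatives of the Gamma function. Each case reduces, after integrating out $y$ (and the auxiliary variables $x,q$ coming from $I(t,y)$), to a one-dimensional integral in $t$ of elementary type.

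For part (i), I would set $a=0$ in \eqref{eq:supremum_derivative_limit_integral}, so that $p(t,y;T,0)$ factors as $\tfrac{y}{\sqrt{\pi}\,t^{3/2}}e^{-y^2/(2t)}\cdot\tfrac{1}{\sqrt{\pi(T-t)}}$. The piece $y(1+\log t)$ then yields a Gaussian second moment in $y$ and a remaining $t$-integral that, after an arc-sine substitution $t=T\sin^2\theta$ (equivalently a Beta-integral) and one integration by parts against $t^{-1/2}$, produces the $\sqrt{2T/\pi}\,\log T$ term. The $I(t,y)$-piece is expanded via Proposition~\ref{prop:I(t,y)}, Fubini interchanges the order of integration, and the inner Gaussian integrals in $x$ and $y$ collapse; the remaining $t$-integral contributes the constant $-2\sqrt{2T/\pi}$ that completes the formula.

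For parts (ii) and (iii), the time integration is over $(0,\infty)$ and the densities \eqref{eq:density:infty} (respectively with drift $a/\sqrt 2$) carry an exponential $e^{-a^2 t/2}$ (resp.\ $e^{-a^2 t/4}$). Performing the $y$-integration first turns each of the terms $y(1+\log t)$, $at\log t$, and the expanded $I(t,y)$ into integrals of the form
\begin{equation*}
\int_0^\infty t^{s-1}e^{-\lambda t}\,{\rm d}t=\Gamma(s)\lambda^{-s},
\end{equation*}
whose derivative at integer $s$ produces both $\log\lambda$ and $\gamma_{\rm E}$ through $\Gamma'(1)=-\gamma_{\rm E}$; this is exactly the source of the $-\tfrac{1}{a}(\gamma_{\rm E}+\log(2a^2))$ in (ii). For (iii) the extra factor $e^{\sqrt{2}y}$ in \eqref{eq:supremum_derivative_pickands_limit_integral} shifts the mean of the Gaussian in $y$; completing the square changes the effective $t$-rate from $a^2/4$ to a rate proportional to $(a-1)^2/a$, and regrouping the resulting $\Gamma$-function derivatives gives the combination $1+(a-2)\log((1-a)/a)$ together with the prefactor $-2a/(a-1)$.

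The main obstacle is the $I(t,y)$ contribution: substituting Proposition~\ref{prop:I(t,y)} introduces two additional integration variables, producing genuinely quadruple (triple in the infinite-horizon cases) integrals, and Fubini must be justified carefully because several of the inner integrands are signed. Once the order is exchanged, the innermost Gaussian integrals in $x$ produce a difference of $\erf$ (or $\erfc$) terms that must be simplified before the $t$-integral becomes a $\Gamma$-integral of the type above. Since these manipulations are lengthy but mechanical, I would present in the main text only the reduction to the one-dimensional $t$-integral and its evaluation via the $\Gamma$-function identities, relegating the bookkeeping to Appendix~\ref{appendix:calculations}.
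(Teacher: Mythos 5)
Your overall strategy is the paper's: substitute the explicit densities \eqref{eq:density:T}--\eqref{eq:density:infty} into Theorems~\ref{thm:supremum_derivative_limit}--\ref{thm:supremum_derivative_pickands_limit}, expand the $I(t,y)$-term via Proposition~\ref{prop:I(t,y)}, interchange the order of integration, and grind through Gaussian and Gamma-type integrals (the paper does exactly this in Appendix~\ref{appendix:calculations}). However, the specific reduction you propose for parts (ii) and (iii) would not go through as stated. With the density \eqref{eq:density:infty}, integrating out $y$ first does \emph{not} produce integrals of the form $\int_0^\infty t^{s-1}e^{-\lambda t}\,{\rm d}t$ with a constant rate $\lambda$: writing $\tfrac{(y+ta)^2}{2t}=\tfrac{y^2}{2t}+ay+\tfrac{a^2t}{2}$ and computing $\int_0^\infty y^2 e^{-(y+ta)^2/(2t)}\,{\rm d}y$ leaves a factor $\erfc\big(a\sqrt{t/2}\,\big)$ (plus a companion term), so the subsequent $t$-integral is not a Gamma integral and $\gamma_{\rm E}$ does not drop out of $\Gamma'(1)$ in the way you describe. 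The device the paper uses instead is the substitution $y=tz$ (and $x=\sqrt{t}\,w$ inside the $I(t,y)$-term), which turns the exponent into $-t(z+a)^2/2$, i.e.\ \emph{linear in $t$ with a $z$-dependent rate}; one then integrates over $t$ first, obtaining $\int_0^\infty t^{3/2}e^{-t(z+a)^2/2}\,{\rm d}t$ and its $\log t$-weighted companion (formulas 3.381-4 and 4.352-1 in \cite{integral_table}), which is indeed where $\gamma_{\rm E}$ and the logarithms enter, and finally over $z$ (rational and rational-times-log integrands). In (iii) the factor $e^{\sqrt 2 y}$ modifies the rate to $z_1^2/2$ with $z_1^2=(z+\tfrac{a-2}{\sqrt2})^2+2(a-1)$, not to a rate proportional to $(a-1)^2/a$.

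Two smaller points. In part (i) your attribution of the constants is off: with the paper's split $\mathscr M'_{1/2}(T,0)=\mathcal I_1-\mathcal I_2$, one gets $\mathcal I_1=\sqrt{2T/\pi}\,(\log T+2\log 2-1)$ and $\mathcal I_2=\sqrt{2T/\pi}\,(1+2\log 2)$, so the $I(t,y)$-piece does not contribute exactly $-2\sqrt{2T/\pi}$; the $2\log 2$ terms must cancel between the two pieces. Finally, for a corollary whose entire content is the evaluation of these integrals, deferring ``the bookkeeping'' is not an option: the arctan/erf identities needed after the $x$-integration (e.g.\ $\int_0^\infty \erf(zq/\sqrt2)e^{-z^2/2}{\rm d}z=\sqrt{2/\pi}\,\arctan(q)$ and the partial-fraction identity \eqref{eq:rational_function_identity}) are precisely where the answer is produced, and they need to be exhibited.
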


The calculations leading to Corollary~\ref{cor:explicit_expressions} are deferred to Appendix~\ref{appendix:calculations}. From Corollary~\ref{cor:explicit_expressions}(iii), it straightforwardly follows that for any $a > 1$, the function $H \rightarrow \mathscr P_{H}(\infty,a)$ is decreasing in the neighborhood of $H=\frac{1}{2}$
(because $\displaystyle \mathscr P'_{1/2}(\infty,a)<0$).
The following corollary extends this observation to the whole domain $H\in(0,1)$; its proof follows straightforwardly from Proposition~\ref{pit.ine} given in Appendix~\ref{appendix:auxiliary_results}.
\begin{corollary}\label{pit.in}
Suppose that $0<H_1<H_2< 1$.
\begin{itemize}
\item[(i)] If $a\in \R$ and $T>0$, then
\[
\mathscr P_{H_2}(T,a)\le \mathscr P_{H_1}\left(T^{H_2/H_1},a \right).
\]
\item[(ii)] If $a>1$, then
\[
\mathscr P_{H_2}(\infty,a)\le \mathscr P_{H_1}(\infty,a).
\]
\end{itemize}
\end{corollary}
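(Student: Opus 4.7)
The plan is to invoke Proposition~\ref{pit.ine}, which packages a Slepian-type comparison between fBms of different Hurst indices after an appropriate time-change, and then deduce part (ii) from part (i) by letting $T\to\infty$. The strategy has three clean steps.

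\emph{Step 1 (algebraic inequality).} I would first isolate the driving inequality: for $0<H_1<H_2<1$ and $s,t\ge 0$,
\[
|s^{H_2/H_1}-t^{H_2/H_1}|^{2H_1}\ge |s-t|^{2H_2}.
\]
Setting $\alpha:=H_2/H_1>1$, this reduces to $|s^\alpha-t^\alpha|\ge|s-t|^\alpha$ for $s,t\ge 0$, which in turn follows from super-additivity of $x\mapsto x^\alpha$ on $[0,\infty)$ (differentiate to see the relevant primitive is non-decreasing).

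\emph{Step 2 (part (i) via Slepian).} On $t\in[0,T]$ I would introduce the two Gaussian processes
\[
Z_1(t):=\sqrt 2\, B_{H_2}(t)-a t^{2H_2},\qquad Z_2(t):=\sqrt 2\, B_{H_1}(t^{H_2/H_1})-a t^{2H_2}.
\]
They share the mean function $-at^{2H_2}$ and the variance function $2t^{2H_2}$; Step~1 yields
\[
\e(Z_2(s)-Z_2(t))^2=2|s^{H_2/H_1}-t^{H_2/H_1}|^{2H_1}\ge 2|s-t|^{2H_2}=\e(Z_1(s)-Z_1(t))^2,
\]
so the covariances of $Z_2$ are pointwise dominated by those of $Z_1$. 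Slepian's normal comparison lemma then gives that $\sup_{t\in[0,T]} Z_2(t)$ stochastically dominates $\sup_{t\in[0,T]} Z_1(t)$, and monotonicity of the exponential yields
\[
\mathscr P_{H_2}(T,a)=\e\exp\bigl(\sup\nolimits_{t\in[0,T]} Z_1(t)\bigr)\;\le\;\e\exp\bigl(\sup\nolimits_{t\in[0,T]} Z_2(t)\bigr).
\]
The bijective change of variable $u=t^{H_2/H_1}$ from $[0,T]$ onto $[0,T^{H_2/H_1}]$ identifies the right-hand side with $\mathscr P_{H_1}(T^{H_2/H_1},a)$, giving part~(i).

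\emph{Step 3 (part (ii) via $T\to\infty$).} I would then pass to the limit $T\to\infty$ in the inequality of (i). The map $T\mapsto\mathscr P_H(T,a)$ is monotone non-decreasing (the supremum is over a growing interval), so the limits on both sides exist; under $a>1$ they equal the finite Piterbarg constants $\mathscr P_{H_i}(\infty,a)$. Since $T^{H_2/H_1}\to\infty$ as well, the inequality passes to the limit and part~(ii) follows.

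The main obstacle, and presumably the content that Proposition~\ref{pit.ine} handles, is extending the finite-dimensional Slepian inequality to suprema on a continuum with a non-trivial deterministic drift subtracted. This is standard: apply Slepian to a countable dense subset of $[0,T]$, with per-coordinate thresholds $x+a t_j^{2H_2}$ (which converts the drifted event $\{Z_i(t_j)\le x\}$ into a rectangle event for the centered Gaussian marginals), and then invoke almost-sure continuity of the fBm sample paths to pass from the dense subset to the continuum.
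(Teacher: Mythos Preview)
Your proposal is correct and follows essentially the same route as the paper: Corollary~\ref{pit.in} is the case $\alpha=1$ of Proposition~\ref{pit.ine}, whose proof uses exactly your covariance inequality (stated there via convexity of $x\mapsto x^{H_2/H_1}$) together with Slepian's lemma to obtain stochastic domination of the suprema, from which the exponential-moment inequality follows. The only cosmetic difference is that for part~(ii) you pass to the limit $T\to\infty$ in part~(i), whereas the paper reruns the Slepian argument directly on $[0,\infty)$ and invokes the H\"usler--Piterbarg tail asymptotics to certify finiteness of $\mathscr P_{H_1}(\infty,a)$ for $a>1$; both arguments are valid and equivalent in effect.
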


\begin{remark}\label{rem:conjecture}
Using Mathematica software \cite{Mathematica} and applying certain simplifications, we are able to calculate the following limit
\begin{equation}\label{eq:conjecture_ours}
\lim_{T\to\infty} \lim_{H\to1/2} \frac{\mathscr H_H(T)/T - \mathscr H_{1/2}(T)/T}{H-\tfrac{1}{2}} = \lim_{T\to\infty} \frac{\mathscr P_{1/2}'(T,1)}{T} = -2\gamma_{{\rm E}}.
\end{equation}
It is noted that the result above is very much related to the result established in \cite{DelormeEtAl2017Pickands} on the derivative of Pickands constant at $H=\tfrac{1}{2}$, that is
\begin{equation}\label{eq:conjecture_theirs}
\lim_{H\to1/2} \lim_{T\to\infty}  \frac{\mathscr H_H(T)/T - \mathscr H_{1/2}(T)/T}{H-\tfrac{1}{2}} = \lim_{H\to1/2} \frac{\mathscr H_H - \mathscr H_{1/2}}{H-\tfrac{1}{2}} = -2\gamma_{{\rm E}},
 \end{equation}
see also Eq.~\eqref{eq:pickands_expansion}. The difference between Eq.~\eqref{eq:conjecture_ours} and Eq.~\eqref{eq:conjecture_theirs} is the order of the limit operations.
\end{remark}


\section{Mandelbrot \& van Ness' fractional Brownian field}\label{s:MvN}

In this section we provide some properties of Mandelbrot \& van Ness' field, which play important role in the proofs of results given in Section~\ref{sec:main}.

In contrast to the definition of $B_H(t)$, which we have given at the beginning of Section~\ref{s:introduction},
the definition in \eqref{eq:fbm} provides an additional \textit{coupling}
between fBms with different values of $H$. In fact, we will view the process $\{X_H(t), (H,t)\in(0,1)\times\R_+\}$
as a centered Gaussian field and refer to it as \textit{Mandelbrot \& van Ness' field} (MvN).
Realizations of the MvN field are trajectories (surfaces) $(H,t)\mapsto X_H(t)$;
using the standard rules of ${\mathcal L}^2$ theory of stochastic integrals, we find that
\begin{equation}\label{eq:covariance_mvn}
\begin{split}
\cov(X_{H}(t),X_{H'}(t')) & = \int_{-\infty}^0 \big[(t-s)^{H-\tfrac{1}{2}} - (-s)^{H-\tfrac{1}{2}}\big]\big[(t'-s)^{H'-\tfrac{1}{2}} - (-s)^{H'-\tfrac{1}{2}}\big]{\rm d}s\\
&\quad + \int_0^{t\wedge t'} (t-s)^{H-\tfrac{1}{2}}(t'-s)^{H'-\tfrac{1}{2}}{\rm d}s
\end{split}
\end{equation}
for any $H,H'\in(0,1)$ and $t,t'\in\R_+$. Explicit value of covariance function above was found in \cite[Theorem~4.1]{StoevTaqqu06}. While it is a well-known fact that fBm is self-similar, the same holds true for the MvN field, i.e. for any $c>0$ we have
\begin{equation}\label{eq:self_similarity}
\{B_H(ct): (H,t)\in(0,1)\times\R_+\} \eqd \{c^H B_H(t): (H,t)\in(0,1)\times\R_+\},
\end{equation}
where `$\eqd$' stands for the equality of finite-dimensional distributions, see e.g. \cite[Theorem~2.1(c)]{StoevTaqqu04}; this can also be seen by a direct calculation using \eqref{eq:covariance_mvn}.
It has been shown that there exists a continuous modification of the MvN field, see \cite[Theorem~4]{peltier1995multifractional}.

We remark that for the purposes of this paper the domain of MvN field is $(H,t)\in(0,1)\times\R_+$, however, we note that it can be extended to $(H,t)\in(0,1)\times\R$.

Next, for any $n\in\Z_+$, where $\Z_+$ is the set of non-negative integers, we define the \textit{$n$th derivative of} MvN  field (with respect to the Hurst parameter), to be the stochastic process $\{X^\n_H(t) : (H,t)\in(0,1)\times\R_+\}$, where
\begin{equation}\label{eq:MvN_derivative_field}
X^\n_H(t) = \int_{-\infty}^0 f^\n_H(t,s){\rm d}B(s) + \int_0^t g^\n_H(t,s){\rm d}B(s),
\end{equation}
for any $(H,t)\in(0,1)\times\R_+$, with $f_H(t,s) := (t-s)^{H-\tfrac{1}{2}} - (-s)^{H-\tfrac{1}{2}}$, $g_H(t,s) := (t-s)^{H-\tfrac{1}{2}}$, and
\begin{equation}\label{def:f_H^\n}
\begin{split}
f_H^\n(t,s) & := \frac{\partial^n}{\partial H^n} f_H(t,s) = \log^n(t-s)(t-s)^{H-\tfrac{1}{2}} - \log^n(-s)(-s)^{H-\tfrac{1}{2}},\\
g_H^\n(t,s) & := \frac{\partial^n}{\partial H^n} g_H(t,s) = \log^n(t-s)(t-s)^{H-\tfrac{1}{2}}.
\end{split}
\end{equation}
We follow the convention that $\frac{\partial^0}{\partial x^0} h(x) := h(x)$ for any function $h(x)$. In particular, for $n=0$, the definition \eqref{eq:MvN_derivative_field} is equivalent to the original MvN field \eqref{eq:def_X}, i.e. $X_H^{(0)} = X_H$. Since the case $H=\tfrac{1}{2}$ will be used particularly often throughout this manuscript, we write $X^\n(\cdot) := X_{1/2}^\n(\cdot)$ for brevity. Definition \eqref{eq:MvN_derivative_field} can be found in \cite{StoevTaqqu05}.

Let us emphasize that all derivatives of the MvN field live on the same probability space, and in fact they are jointly Gaussian.
The covariance between random variables $X^{(n)}_H(t)$, and $X^{(n')}_{H'}(t')$ can be found in analogous way to \eqref{eq:covariance_mvn}.

Similarly to the previous section, the derivatives of MvN field also have their PWZ representation. For $n\in\Z_+$ we define a \textit{Paley-Wiener-Zygmund} (PWZ) representation of the MvN field and its derivatives $\{\tilde X_H^\n(t) : (H,t)\in(0,1)\times\R_+\}$, where
\begin{equation}\label{def:PWZ}
\begin{split}
\tilde X^\n_H(t) & := -\int_{-\infty}^0 \Big[\frac{\partial}{\partial s}f^\n_H(t,s)\Big]B(s){\rm d}s + g_H^\n(t,0)B(t)\\
&\quad + \int_0^t \Big[\frac{\partial}{\partial s} g_H^\n(t,s)\Big]\big(B(t)-B(s)\big){\rm d}s,
\end{split}
\end{equation}
for any $(H,t)\in(0,1)\times\R_+$, where
\begin{equation}\label{def:ders_f_H^\n}
\begin{split}
& \frac{\partial}{\partial s}f^\n_H(t,s) = \frac{\partial}{\partial s}g^\n_H(t,s) - \frac{\partial}{\partial s}g^\n_H(0,s)\\
& \frac{\partial}{\partial s}g^\n_H(t,s) = -n\log^{n-1}(t-s)(t-s)^{H-\tfrac{3}{2}} - (H-\tfrac{1}{2})\log^{n}(t-s)(t-s)^{H-\tfrac{3}{2}}.
\end{split}
\end{equation}
Again, we recognize that when $n=0$, the definition \eqref{def:PWZ} is equivalent to \eqref{def:PWZ_n0},
i.e. $\tilde X^{(0)}_H = \tilde X_H$.

The following proposition justifies calling the PWZ, the equivalent \emph{representation} of MvN field.

\begin{proposition}\label{prop:continuous_modification}
For all $n\in\Z_+$, the field $\{\tilde X^\n_H(t): (H,t)\in(0,1)\times\R_+\}$ is a continuous modification of $\{X^\n_H(t): (H,t)\in(0,1)\times\R_+\}$.
\end{proposition}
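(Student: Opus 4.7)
The proposition asks for two things: that $\tilde X^\n_H(t)$ and $X^\n_H(t)$ agree almost surely for each fixed $(H,t)\in(0,1)\times\R_+$, and that $(H,t)\mapsto \tilde X^\n_H(t)$ admits a version that is sample-path continuous on this domain. The plan is to handle these two issues separately, starting with the finite-dimensional identification and then doing a pathwise analysis of the Riemann representation.

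For the almost sure identity, I would apply Appendix~\ref{appendixA}, which asserts the equivalence between $\mathcal L^2$ and Paley-Wiener-Zygmund stochastic integrals for sufficiently regular deterministic integrands. Formally, \eqref{def:PWZ} is obtained from \eqref{eq:MvN_derivative_field} by integration by parts $\int h(s)\,{\rm d}B(s) = h(t)B(t)-h(0)B(0) - \int h'(s)B(s)\,{\rm d}s$ applied to $h=g_H^\n(t,\cdot)$ on $(0,t)$ and $h=f_H^\n(t,\cdot)$ on $(-\infty,0)$. The required verifications are that $f_H^\n(t,\cdot)$ and $g_H^\n(t,\cdot)$ are $\mathcal C^1$ on the interiors of integration and that the boundary terms at $s=-\infty$ vanish. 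The second point follows from the Taylor expansion
\begin{equation*}
(t-s)^{H-\tfrac12}-(-s)^{H-\tfrac12} = (H-\tfrac12)\,t\,(-s)^{H-\tfrac32}+O\!\left(t^2(-s)^{H-\tfrac52}\right),\quad s\to-\infty,
\end{equation*}
and analogous expansions after differentiating in $H$, which yield $f_H^\n(t,s)=O(|s|^{H-3/2}\log^n|s|)$ and $\tfrac{\partial}{\partial s}f_H^\n(t,s)=O(|s|^{H-5/2}\log^n|s|)$ at $-\infty$.

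For pathwise continuity, I would work on the full-probability event on which $B$ is continuous, satisfies the global law of the iterated logarithm $|B(s)|\le K_\omega\sqrt{|s|\log\log(|s|+|s|^{-1}+3)}$ for every $s\in\R$, and the local Hölder bound $|B(t)-B(s)|\le L_\omega|t-s|^{1/2-\varepsilon}$ uniformly on compacts. Decomposing $\tilde X^\n_H(t)$ into the three summands of \eqref{def:PWZ}, the boundary term $\log^n(t)\,t^{H-1/2}B(t)$ is continuous on $(0,1)\times(0,\infty)$ and extends continuously to $t=0$ because the LIL forces it to be $O(|\log t|^n t^H\sqrt{\log\log(1/t)})\to 0$ as $t\to 0^+$, uniformly in $H$ on compact subintervals of $(0,1)$. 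For the Riemann integral on $(0,t)$, the formulas in \eqref{def:ders_f_H^\n} give $|\tfrac{\partial}{\partial s}g_H^\n(t,s)|\le C(n+|H-\tfrac12|)|\log(t-s)|^n(t-s)^{H-3/2}$, and combining with the Hölder bound yields the pathwise majorant $C'_\omega|\log(t-s)|^n(t-s)^{H-1-\varepsilon}$, which is integrable on $(0,t)$ for $\varepsilon<H$ and uniformly dominated on compact neighborhoods of any $(H_0,t_0)$. For the Riemann integral on $(-\infty,0)$, the estimate $\tfrac{\partial}{\partial s}f_H^\n(t,s)=O(|s|^{H-5/2}\log^n|s|)$ at $-\infty$ combined with the LIL gives an integrable tail, while near $s=0^-$ the dominant contribution is $|s|^{H-3/2}|\log|s||^n\cdot K_\omega\sqrt{|s|\log\log(1/|s|)}$, integrable for $H>0$. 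In both integrals, dominated convergence then transfers pointwise continuity of the integrand (away from $s\in\{0,t\}$) into continuity of the integral as a function of $(H,t)$.

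The main obstacle is engineering dominating functions that are simultaneously valid over a full neighborhood of an arbitrary target $(H_0,t_0)\in(0,1)\times\R_+$, since the three singular regions $s\to-\infty$, $s\to 0$, and $s\to t^-$ each require a different cancellation: Taylor expansion in $t/(-s)$ at $-\infty$, power integrability at $0$, and the Hölder-cancellation of $B(t)-B(s)$ at $t$. Once these uniform pathwise bounds are established on compacts, continuity of each summand follows by dominated convergence, and the identification $\tilde X_H^\n(t)=X_H^\n(t)$ a.s.\ from the first step promotes $\tilde X^\n_H$ to a genuine continuous modification of $X^\n_H$.
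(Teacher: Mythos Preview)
Your proposal is correct and follows essentially the same route as the paper. The paper factors the argument through two results: Lemma~\ref{lem:PWZ_lemma} in Appendix~\ref{appendixA} establishes the pointwise a.s.\ identity $\tilde X^\n_H(t)=X^\n_H(t)$ via the same integration-by-parts mechanism you describe (with conditions (A) and (B) encoding exactly your Taylor-expansion and power-law bounds), while Proposition~\ref{prop:PWZ_continuous} carries out the pathwise continuity argument using the law of the iterated logarithm at $-\infty$, H\"older continuity on compacts, and dominated convergence with majorants built from Lemma~\ref{lem:log_behavior}---precisely the scheme you outline for the three singular regions $s\to-\infty$, $s\to0^-$, and $s\to t^-$.
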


The result in Proposition~\ref{prop:continuous_modification} is a direct consequence of the considerations in Appendix~\ref{appendixA} (Lemma~\ref{lem:PWZ_lemma} in particular) and the proposition below.

\begin{proposition}\label{prop:PWZ_continuous}
For all $n\in\Z_+$,
\begin{align*}
\p\left((H,t) \mapsto \tilde X^\n_H(t) \text{ is a continuous mapping for all } (H,t)\in(0,1)\times\R_+ \right) = 1.
\end{align*}
\end{proposition}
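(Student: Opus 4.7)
The plan is to work pathwise on an almost sure event $\Omega_0$ on which $B$ is continuous on $\R$, is locally $(\tfrac{1}{2}-\delta)$-Hölder for every $\delta>0$, and satisfies the two-sided law-of-the-iterated-logarithm bound $|B(s)|\le C_\omega(1+\sqrt{|s|\log(e+|s|)})$ for all $s\in\R$. On such an event, I would show that each of the three terms in the decomposition \eqref{def:PWZ} is jointly continuous in $(H,t)$ on $(0,1)\times\R_+$, by invoking dominated convergence with bounds that are uniform on each compact subset $K\subset(0,1)\times\R_+$.

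The boundary term $g^\n_H(t,0)B(t)=t^{H-\tfrac{1}{2}}\log^n(t)\,B(t)$ is jointly continuous on $(0,1)\times(0,\infty)$ by composition; combining the factor $t^{H-\tfrac{1}{2}}|\log t|^n$ with the Hölder estimate $|B(t)|\le C_\omega t^{\tfrac{1}{2}-\delta}$ yields $|g^\n_H(t,0)B(t)|\le C_\omega t^{H-\delta}|\log t|^n\to 0$ as $t\to 0$, uniformly for $H$ in any compact subinterval of $(0,1)$ once $\delta$ is chosen smaller than its left endpoint.

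For the integral over $(-\infty,0)$, I would combine \eqref{def:ders_f_H^\n} with a first-order Taylor expansion in $t$ to produce the uniform tail estimate
\[
\bigl|\partial_s g^\n_H(t,s)-\partial_s g^\n_H(0,s)\bigr|\le C_K\, t\,|s|^{H-\tfrac{5}{2}}\log^{n+1}(e+|s|),\qquad s\le -1,\ (H,t)\in K,
\]
together with the near-zero bound $|\partial_s g^\n_H(t,s)-\partial_s g^\n_H(0,s)|\le C_K|s|^{H-\tfrac{3}{2}}\log^n(2/|s|)$ on $[-1,0)$. Paired respectively with the LIL growth at infinity and the Hölder decay $|B(s)|\le C_\omega|s|^{\tfrac{1}{2}-\delta}$ near $0$ (taking $\delta<\inf\{H:(H,t)\in K\}$), the integrand is dominated on $K$ by an $L^1(\R_-)$ function. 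For the integral over $(0,t)$, the substitution $u=t-s$ and the Hölder bound $|B(t)-B(t-u)|\le C_\omega u^{\tfrac{1}{2}-\delta}$ give an integrand bounded by $C_\omega u^{H-1-\delta}\bigl(\log^{n-1}(1/u)+\log^n(1/u)\bigr)$, which lies in $L^1((0,t))$ and is dominated uniformly on $K$. Since in both cases the integrand is continuous in $(H,t)$ pointwise in the integration variable, dominated convergence delivers continuity of the two integrals; the fact that each integral vanishes as $t\to 0$ under the same bounds extends continuity to the boundary $t=0$.

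The main obstacle is assembling these dominating functions so that they control simultaneously the three singular regimes: (a) the long-range tail at $s\to-\infty$, where only the Taylor cancellation in $t$ furnishes the extra power $|s|^{-1}$ needed for integrability against $B(s)$; (b) the $s=t$ singularity of order $H-\tfrac{3}{2}$, which becomes integrable only after being paired with the Hölder continuity of $B$; and (c) the $s=0$ singularity, absorbed by the vanishing of Brownian motion there. Once these bounds are in place, the three terms are continuous in $(H,t)$ almost surely, and the proposition follows by summation.
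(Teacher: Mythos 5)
Your proposal is correct and follows essentially the same route as the paper's proof: a pathwise argument on the almost-sure event where $B$ satisfies the law of the iterated logarithm at infinity and local H\"older continuity, a term-by-term treatment of the decomposition \eqref{def:PWZ}, a mean-value/Taylor cancellation in $t$ to gain the extra factor $|s|^{-1}$ in the tail $s\to-\infty$, and dominated convergence with compactly uniform majorants. The only differences are cosmetic: the paper absorbs the logarithmic factors into small powers via Lemma~\ref{lem:log_behavior} and handles the $t$-dependent domain of the integral over $(0,t)$ by an $\eta$-splitting near $s=t$ rather than by your substitution $u=t-s$.
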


We note that PWZ representation of fractional Brownian motion is defined in terms of Riemann integrals,
as opposed to the MvN representation, which is defined through ${\mathcal L}^2$ stochastic integrals.

Weaker versions of Proposition~\ref{prop:PWZ_continuous} and~\ref{prop:nth_derivative_limit_PWZ}
were derived in \cite{StoevTaqqu05}.
  The rest of this section is
devoted to showing various fundamental properties of the PWZ field; due to Proposition~\ref{prop:continuous_modification},
these results carry over to the MvN field and its derivatives.
All the proofs are deferred to Appendix~\ref{appendix:proofs}.

In a special case $n=0$, Proposition~\ref{prop:PWZ_continuous} was proven by \cite {peltier1995multifractional}.
The following result justifies calling processes $X^\n_H(t)$ for $n>0$ the \textit{derivatives with respect to the Hurst parameter}.

\begin{proposition}\label{prop:nth_derivative_limit_PWZ}
For all $n\in\Z_+$,
\begin{align*}
\p\left(\lim_{\Delta\to0} \frac{\tilde X^\n_{H+\Delta}(t)-\tilde X^\n_{H}(t)}{\Delta} = \tilde X^{(n+1)}_{H}(t) \text{ for all } (H,t)\in(0,1)\times\R_+\right) = 1.
\end{align*}
\end{proposition}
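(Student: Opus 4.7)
The plan is to differentiate the PWZ representation \eqref{def:PWZ} term-by-term in $H$ and to justify the interchange with the Riemann integrals by pathwise dominated convergence, then to upgrade the pointwise almost-sure result to a simultaneous statement in $(H,t)$ using local uniformity together with Proposition~\ref{prop:PWZ_continuous}. The formal computation is immediate: since $\tfrac{\partial}{\partial H}[\log^n(t-s)(t-s)^{H-1/2}]=\log^{n+1}(t-s)(t-s)^{H-1/2}$, Schwarz's theorem yields $\tfrac{\partial}{\partial H}\tfrac{\partial}{\partial s}f^\n_H(t,s)=\tfrac{\partial}{\partial s}f^{(n+1)}_H(t,s)$, and analogously for $g^\n_H$. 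Applying $\partial/\partial H$ formally to each of the three terms in \eqref{def:PWZ} thus produces exactly $\tilde X^{(n+1)}_H(t)$; the work is in justifying the interchange.

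For the pointwise claim at fixed $(H_0,t_0)$, the mean value theorem gives, for $|\Delta|\le\delta$ small,
\[
\bigg|\frac{1}{\Delta}\Big(\tfrac{\partial}{\partial s}f^\n_{H_0+\Delta}(t_0,s)-\tfrac{\partial}{\partial s}f^\n_{H_0}(t_0,s)\Big)\bigg| \le \sup_{H'\in[H_0-\delta,H_0+\delta]}\Big|\tfrac{\partial}{\partial s}f^{(n+1)}_{H'}(t_0,s)\Big|,
\]
and similarly for the $g^\n_H$ terms. Next I would verify that, on a full-probability event, the right-hand side multiplied by $|B(s)|$ is integrable on $(-\infty,0)$: near $s=0$ the Hölder-type estimate $|B(s)|=o((-s)^{1/2-\ep})$ a.s.\ dominates the singularity $|\log(-s)|^{n+1}(-s)^{H'-3/2}$; as $s\to-\infty$ the cancellation $g^\n_H(t,s)-g^\n_H(0,s)$ exhibited in \eqref{def:ders_f_H^\n} reduces the order of $\tfrac{\partial}{\partial s}f^{(n+1)}_{H'}(t_0,\cdot)$ to $\log^{n+1}(|s|)|s|^{H'-5/2}$, which is integrable against the a.s.\ growth $|B(s)|=O(|s|^{1/2+\ep})$ for $H'<1-\ep$. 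The inner integral on $(0,t_0)$ is analogous, with the singularity $|\log(t_0-s)|^{n+1}(t_0-s)^{H'-3/2}$ at $s=t_0$ absorbed by the Hölder increment $B(t_0)-B(s)=O((t_0-s)^{1/2-\ep})$ a.s. Dominated convergence then yields $\Delta^{-1}(\tilde X^\n_{H_0+\Delta}(t_0)-\tilde X^\n_{H_0}(t_0))\to\tilde X^{(n+1)}_{H_0}(t_0)$ almost surely.

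To upgrade to a simultaneous statement in $(H,t)$, I observe that the estimates above are locally uniform in $(H_0,t_0)$: for each compact $K\subset(0,1)\times\R_+$, a single $\delta=\delta(K)>0$ works and the dominating function may be chosen uniformly over $K$. Thus, on a full-probability event (depending only on $K$), the difference quotient converges uniformly on $K$ to $\tilde X^{(n+1)}_H(t)$. Exhausting $(0,1)\times\R_+$ by a countable sequence of compacts, intersecting the corresponding null sets, and using a.s.\ continuity of $(H,t)\mapsto\tilde X^{(n+1)}_H(t)$ from Proposition~\ref{prop:PWZ_continuous}, I obtain a single event of probability one on which the limit identity holds at every $(H,t)\in(0,1)\times\R_+$.

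The main obstacle will be producing the locally-uniform integrable dominating function. The weights $(t-s)^{H-3/2}$ and $(-s)^{H-3/2}$ are not Lebesgue integrable at $s=t$ and $s=0$ respectively when $H<1/2$, and the logarithmic factors $\log^{n+1}$ only sharpen these singularities. They must be absorbed by the Brownian cancellations already built into the PWZ representation — the increment $B(t)-B(s)$ on $(0,t)$ and the difference $g^\n_H(t,s)-g^\n_H(0,s)$ at $-\infty$ — and these cancellations must be shown to be effective \emph{uniformly} for $H$ in a small interval around $H_0$. Carrying out these Hölder-type estimates carefully, strengthening the pointwise argument of \cite{StoevTaqqu05} to local uniformity in $(H,t)$, is the technical crux.
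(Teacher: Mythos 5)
Your proposal is correct and follows essentially the same route as the paper: the paper also applies the mean value theorem in $H$ to rewrite the difference quotient of $\partial_s f^\n_H$ and $\partial_s g^\n_H$ as the $(n+1)$-st derivative at an intermediate point $\tilde H_s$, and then reuses verbatim the integrable-majorant construction from the proof of Proposition~\ref{prop:PWZ_continuous} (H\"older continuity of $B$ at the singularities $s=0$ and $s=t$, the law of the iterated logarithm together with the mean-value cancellation in $f^\n_H$ at $s=-\infty$), all on a single fixed trajectory satisfying these path properties, which is exactly your compact-exhaustion/full-measure-event upgrade in disguise. No gap.
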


Notice that thanks to Proposition~\ref{prop:nth_derivative_limit_PWZ}, it makes sense to write $\frac{\partial^n}{\partial H^n} X_H(t) = X_H^\n(t)$.
In the following, for any $k\in\N$ let: $a_1,\ldots,a_k\in\R$, $n_1,\ldots,n_k\in\Z_+$, and $H_1,\ldots,H_k\in(0,1)$ and define
\begin{equation}\label{def:xi_linear_combination}
\eta(t) := \sum_{i=1}^k a_i \cdot X^{(n_i)}_{H_i}(t).
\end{equation}

\begin{proposition}\label{prop:stationarity_of_linear_combination}
Gaussian process $\{\eta(t) : t\in\R_+\}$ defined in \eqref{def:xi_linear_combination} is centered and it has stationary increments.
\end{proposition}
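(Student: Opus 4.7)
The plan is to directly unpack the Wiener integral representation \eqref{eq:MvN_derivative_field} and exploit the shift-invariance in distribution of the two-sided Brownian motion $B$. Centering is immediate: each $X^{(n_i)}_{H_i}(t)$ is a Wiener integral against $B$, hence centered, and so $\e\,\eta(t) = \sum_i a_i \e X_{H_i}^{(n_i)}(t) = 0$.

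For stationary increments the plan is to prove the following pathwise shift identity. Fix $h>0$ and define the shifted Brownian motion $\tilde B(u) := B(u+h) - B(h)$, which is itself a standard two-sided Brownian motion with $(\tilde B(u))_{u\in\R} \eqd (B(u))_{u\in\R}$. I will show that almost surely, for every $t\geq 0$ and every $i\in\{1,\dots,k\}$,
\[
X^{(n_i)}_{H_i}(t+h) - X^{(n_i)}_{H_i}(h) = \int_{-\infty}^0 f^{(n_i)}_{H_i}(t,u)\,d\tilde B(u) + \int_0^t g^{(n_i)}_{H_i}(t,u)\,d\tilde B(u).
\]
The same $\tilde B$ drives all indices $i$ simultaneously, so the right-hand side, viewed jointly in $(i,t)$, has the same finite-dimensional law as $(X^{(n_i)}_{H_i}(t))_{i,t}$. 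Since $\eta(0) = 0$ (because $f^{(n)}_H(0,s) \equiv 0$), taking the linear combination $\sum_i a_i(\cdot)$ then yields $\{\eta(t+h) - \eta(h)\}_{t\geq 0} \eqd \{\eta(t)\}_{t\geq 0}$, which is precisely the statement of stationary increments.

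To verify the shift identity, I will substitute $s = u+h$ in each of the four Wiener integrals appearing in $X^{(n_i)}_{H_i}(t+h) - X^{(n_i)}_{H_i}(h)$, and split $\int_0^{t+h} = \int_0^h + \int_h^{t+h}$ and $\int_{-\infty}^0 = \int_{-\infty}^{-h} + \int_{-h}^0$ after the substitution. Checking each region: on $u\in(-\infty,-h)$, the $\log^n(-u-h)(-u-h)^{H-1/2}$ contributions coming from $f^{(n_i)}_{H_i}(t+h,\cdot)$ and $f^{(n_i)}_{H_i}(h,\cdot)$ cancel, leaving $f^{(n_i)}_{H_i}(t,u)$; on $u\in(-h,0)$, the integral $-\int_0^h g^{(n_i)}_{H_i}(h,s)\,dB(s)$ provides exactly the missing $\log^n(-u)(-u)^{H-1/2}$ piece needed to reconstruct $f^{(n_i)}_{H_i}(t,u)$; and on $u\in(0,t)$, the remainder $\int_h^{t+h} g^{(n_i)}_{H_i}(t+h,s)\,dB(s)$ becomes $g^{(n_i)}_{H_i}(t,u)$.

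The main obstacle is purely bookkeeping --- matching the five sub-integrals across the substitution so that the claimed cancellations are transparent. One also has to note that each sub-integral is individually well-defined in the $\mathcal L^2$ sense (the integrands inherit square-integrability on the restricted domains from \eqref{eq:MvN_derivative_field}), so that the partitions are legitimate; this follows from standard properties of Wiener integrals together with the explicit form of $f^{(n)}_H$ and $g^{(n)}_H$.
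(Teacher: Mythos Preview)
Your proof is correct. The paper proceeds differently: it computes the second moment $\e(\eta(t)-\eta(s))^2$ as a double sum of cross-terms $\e\big[(X^{(n_i)}_{H_i}(t)-X^{(n_i)}_{H_i}(s))(X^{(n_j)}_{H_j}(t)-X^{(n_j)}_{H_j}(s))\big]$, applies the It\^o isometry to each, and then substitutes $z=w-s$ in the resulting deterministic integrals to see that each cross-term depends only on $t-s$. Since $\eta$ is centered Gaussian, this determines the law of the increments and yields stationarity. Your argument is the stochastic-integral avatar of the same substitution: instead of shifting the variable inside the deterministic It\^o-isometry integrals, you shift the integrator $B\mapsto\tilde B$ and invoke the translation invariance of two-sided Brownian motion. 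The advantage of your route is that it delivers the joint equality in law of all $X^{(n_i)}_{H_i}$ simultaneously, so the stationarity of $\eta$ follows by a single linear combination without having to expand cross-terms or appeal to the polarization identity; the paper's route is slightly more explicit about the covariance structure but requires the implicit step that, for a centered Gaussian process with $\eta(0)=0$, knowing $\var(\eta(t)-\eta(s))$ for all $t,s$ suffices for stationary increments.
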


For $N\in\Z_+$, $H\in(0,1)$ we define the Taylor sum remainder
\begin{equation}\label{def:R_H(t;N)}
R_H(t;N) := X_H(t) - \sum_{n=0}^{N-1}\frac{X^\n(t)}{n!}\cdot (H-\tfrac{1}{2})^{n}.
\end{equation}
It is noted that $R_H(t;0) = X_H(t)$ and, according to Proposition~\ref{prop:stationarity_of_linear_combination}, $\{R_H(t;N);t\in\R_+\}$ is a centered Gaussian process with stationary increments. Its variance function can also be proven to be H\"{o}lder continuous; see Lemma~\ref{lem:Var(R_H(t,N))} below.
\begin{lemma}\label{lem:Var(R_H(t,N))}
Let $0<\underline H<\overline H<1$ be such that $\tfrac{1}{2}\in\Hspan$. Then, for any $\ep>0$ and $N\in\Z_+$, there exists a constant $C$ such that
\begin{align*}
\e |R_H(t;N) - R_H(s;N)|^2 \leq C(H-\tfrac{1}{2})^{2N} \cdot \left(|t-s|^{2\underline H-\ep} + |t-s|^{2\overline H+\ep}\right)
\end{align*}
for all $t>0$ and $H\in\Hspan$.
\end{lemma}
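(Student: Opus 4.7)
The plan is to apply Taylor's theorem with integral remainder pathwise to the map $H \mapsto X_H(t)$. By Propositions~\ref{prop:continuous_modification}, \ref{prop:PWZ_continuous}, and~\ref{prop:nth_derivative_limit_PWZ}, this map is almost surely $C^\infty$ on $(0,1)$ with $n$-th derivative equal to $X^{(n)}_H(t)$. Hence, for $N \geq 1$, almost surely,
\begin{equation*}
R_H(t;N) = \frac{1}{(N-1)!}\int_{1/2}^H (H-u)^{N-1}\, X^{(N)}_u(t)\, du.
\end{equation*}
Subtracting the same identity at $s$ and applying the Cauchy--Schwarz inequality yields
\begin{equation*}
|R_H(t;N)-R_H(s;N)|^2 \leq \frac{|H-\tfrac{1}{2}|^{2N-1}}{((N-1)!)^2(2N-1)}\left|\int_{1/2}^H \big(X^{(N)}_u(t)-X^{(N)}_u(s)\big)^2\,du\right|.
\end{equation*}
Taking expectation (the integrand is nonnegative, so Tonelli applies) reduces the problem to a uniform estimate of $\e(X^{(N)}_u(t)-X^{(N)}_u(s))^2$ for $u\in\Hspan$. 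By Proposition~\ref{prop:stationarity_of_linear_combination}, specialised to $k=1$, $a_1=1$, $n_1=N$, $H_1=u$, the process $X^{(N)}_u$ has stationary increments, so it suffices to control $\sigma_N^2(u,h):=\var(X^{(N)}_u(h))$ uniformly in $u\in\Hspan$, where $h:=|t-s|$.

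The main technical step is to establish the scaling estimate
\begin{equation*}
\sigma_N^2(u,h) \leq C_\ep \big(h^{2\underline H - \ep} + h^{2\overline H + \ep}\big), \quad h>0,\ u\in\Hspan.
\end{equation*}
For this, I would use the explicit representation~\eqref{eq:MvN_derivative_field}--\eqref{def:f_H^\n}, substituting $s=-hr$ in the integral over $(-\infty,0)$ and $s=h(1-r)$ in the integral over $(0,h)$, and expanding $\log^N(h\,\cdot)=(\log h + \log\,\cdot)^N$ by the binomial theorem. This produces the exact scaling identity
\begin{equation*}
\sigma_N^2(u,h) = h^{2u}\, P_N(u,\log h),
\end{equation*}
in which $P_N(u,x)$ is a polynomial of degree at most $2N$ in $x$ whose coefficients are integrals built from $\phi_k(r,u):=\log^k(1+r)(1+r)^{u-1/2}-\log^k(r)\,r^{u-1/2}$. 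A direct check that $\phi_k(r,u)\sim -\log^k(r)\,r^{u-1/2}$ as $r\to 0$ and $\phi_k(r,u) = O(\log^k(r)\,r^{u-3/2})$ as $r\to\infty$, the latter via the mean-value theorem applied to $x\mapsto \log^k(x)\,x^{u-1/2}$, shows these coefficients are finite and continuous on $(0,1)$, hence uniformly bounded on $\Hspan$. The scaling estimate then follows by splitting cases: for $h\leq 1$, use $h^{2u}\leq h^{2\underline H}$ and $|\log h|^{2N}\leq C_\ep h^{-\ep}$; for $h\geq 1$, use $h^{2u}\leq h^{2\overline H}$ and $|\log h|^{2N}\leq C_\ep h^{\ep}$.

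Combining the scaling estimate with the Cauchy--Schwarz bound yields the lemma for $N\geq 1$, since integrating a uniform bound against $du$ over $[\tfrac{1}{2},H]$ produces an additional factor of $|H-\tfrac{1}{2}|$, which combines with the $|H-\tfrac{1}{2}|^{2N-1}$ prefactor to give $(H-\tfrac{1}{2})^{2N}$. The boundary case $N=0$, where $R_H(t;0)=X_H(t)$, follows directly from $\var(X_H(t)-X_H(s)) = V(H)|t-s|^{2H}$ and continuity of $V$ on $\Hspan$, using the same splitting of $|t-s|$ around $1$. The principal obstacle is the scaling estimate itself: uniform boundedness of the polynomial coefficients hinges on square-integrability of $\phi_k$ at both endpoints, which requires $u>0$ near $r=0$ and $u<1$ near $r=\infty$, so the assumption $\Hspan\subset(0,1)$ is essential.
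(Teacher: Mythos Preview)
Your argument is correct and reaches the same conclusion, but the route differs from the paper's. The paper applies Taylor's theorem with Lagrange remainder directly to the deterministic kernels $f_H(t,s)$ and $g_H(t,s)$ as functions of $H$: writing $\var R_H(t;N)=I_1+I_2$ with
\[
I_1=\int_0^\infty\Bigl[f_H(t,s)-\sum_{n=0}^{N-1}\tfrac{f^{(n)}_{1/2}(t,s)}{n!}(H-\tfrac12)^n\Bigr]^2 ds,
\]
it bounds each bracket by $(H-\tfrac12)^N\sup_{\tilde H\in\Hspan}|f^{(N)}_{\tilde H}(t,s)|/N!$ pointwise in $s$, then estimates the resulting integrals piece by piece using Lemma~\ref{lem:log_behavior}. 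You instead apply Taylor's theorem with integral remainder pathwise to $H\mapsto X_H(t)$, invoke Cauchy--Schwarz in the $u$-integral, and reduce to a uniform bound on $\sigma_N^2(u,h)=\var X^{(N)}_u(h)$, for which you exploit the exact scaling identity $\sigma_N^2(u,h)=h^{2u}P_N(u,\log h)$.

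Each approach has its merits. The paper's argument is self-contained at the $\mathcal L^2$ level and does not require Propositions~\ref{prop:PWZ_continuous} and~\ref{prop:nth_derivative_limit_PWZ}; Lemma~\ref{lem:Var(R_H(t,N))} is in fact used later in the proofs of the main theorems alongside those propositions, so keeping it logically independent is tidy. Your approach is more conceptual: the scaling identity isolates the $h$-dependence in a single polynomial in $\log h$, after which the estimate is immediate, and the Cauchy--Schwarz step produces the factor $(H-\tfrac12)^{2N}$ without having to carry a supremum over $\tilde H$ through integral estimates. The cost is the dependence on pathwise smoothness of the PWZ field; since those propositions are proven independently in the paper, this is not circular, only a different ordering of the logic.
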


\begin{remark}[Extension to normalized MvN field]
Let $B_H(t) = B_H^{(0)}(t) = D(H)\tilde X_H(t)$ and $B^\n_H(t) := \frac{\partial}{\partial H^n}B_H(t)$. Noticing that $D(H)$ is a smooth function, for all $n\in\Z_+$ we have
\begin{align*}
\p\left(\lim_{\Delta\to0} \frac{B^\n_{H+\Delta}(t)- B^\n_{H}(t)}{\Delta} = B^{(n+1)}_{H}(t) \text{ for all } (H,t)\in(0,1)\times\R_+\right) = 1
\end{align*}
Moreover, by a simple application of Leibniz formula for the $n$th derivative of product of functions we find that
\begin{equation}\label{eq:Bn_linear_combination}
B_H^\n(t) = \sum_{k=0}^n \binom{n}{k} D^{(n-k)}(H) \cdot \tilde{X}_H^{(k)}(t).
\end{equation}
The value of $D^\n(H)$ at $H=\tfrac{1}{2}$ for all $n\in\Z_+$ can be found by direct calculation; for example
\begin{equation}\label{eq:values_of_D^\n(1/2)}
D^{(0)}(\tfrac{1}{2}) = 1, \quad D^{(1)}(\tfrac{1}{2}) = 1, \quad D^{(2)}(\tfrac{1}{2}) = -1-\frac{\pi^2}{3}, \quad D^{(3)}(\tfrac{1}{2}) = 3-\pi^2-6\,\zeta(3),
\end{equation}
where $\zeta(\cdot)$ is the Riemann-zeta function.
\end{remark}


\section{Proof of Theorem~\ref{thm:supremum_derivative_limit} and Theorem~\ref{thm:supremum_derivative_pickands_limit}}\label{s:proofs_main_theorems}

Before proving Theorem~\ref{thm:supremum_derivative_limit} and Theorem~\ref{thm:supremum_derivative_pickands_limit}
we need to develop some preliminary results.

Recall the definition of $\tau_H(T,a)$ and $\tau_H^*(T,a)$ from the beginning of Section~\ref{s:functionals_supremum}. In the following proposition we establish that the all-time suprema locations $\tau_H(\infty,a)$ and $\tau^*_H(\infty,a)$ on the MvN field are uniformly bounded with probability growing to 1.
\begin{proposition}\label{prop:tau_uniform_tightness}
Let $0 < \underline H < \overline H < 1$ and $a>0$. Then there exist $C, \gamma>0$ such that
\begin{itemize}
\item[(i)] $\p\Big(\sup_{H\in[\underline H,\overline H]}\tau_H(\infty,a) > T\Big) \leq Ce^{-\gamma T^{2-2\overline H}}$,
\item[(ii)] $\p\Big(\sup_{H\in[\underline H,\overline H]}\tau^*_H(\infty,a) > T\Big) \leq Ce^{-\gamma T^{2\underline H}}$
\end{itemize}
for all $T$ sufficiently large.
\end{proposition}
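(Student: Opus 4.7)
The plan is to reduce both parts to a tail bound for the supremum of the centered Gaussian field $(H,s) \mapsto B_H(s)$ on the compact set $[\underline H, \overline H] \times [1,2]$, via a dyadic decomposition in $t$ combined with the joint self-similarity \eqref{eq:self_similarity}.

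Since $Y_H(0;a) = 0 = Y_H^*(0;a)$ and the argmaxes are a.s.\ unique, the event $\{\tau_H(\infty, a) > T\}$ is contained (up to a null set) in $\{\sup_{t > T}(B_H(t) - at) > 0\}$, and similarly for $\tau_H^*$. Taking the union over $H$ and decomposing $(T, \infty) = \bigcup_{k \geq 0}[2^k T, 2^{k+1} T]$, the union bound yields
\begin{equation*}
\p\Big(\sup_{H \in [\underline H, \overline H]} \tau_H(\infty, a) > T\Big) \leq \sum_{k=0}^\infty \p\Big(\sup_{H,\, t \in [2^k T, 2^{k+1} T]} B_H(t) > a \cdot 2^k T\Big),
\end{equation*}
and similarly for part (ii) after bounding $a t^{2H} \geq a (2^k T)^{2H}$ on the $k$th piece.

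For each $k$, set $c := 2^k T$. By \eqref{eq:self_similarity} and sample-path continuity of the MvN field (Proposition~\ref{prop:PWZ_continuous} with $n = 0$) one has the equality in distribution of random variables
\begin{equation*}
\sup_{H,\, t \in [c, 2c]} B_H(t) \eqd \sup_{H,\, s \in [1,2]} c^H B_H(s).
\end{equation*}
For part (i), the elementary inequality $c^H B_H(s) \leq c^{\overline H} B_H(s)^+$ (valid for $c \geq 1$, since $c^H \leq c^{\overline H}$ when $B_H(s) \geq 0$ and the left-hand side is nonpositive otherwise) reduces the $k$th summand to $\p(M > a (2^k T)^{1-\overline H})$, where $M := \sup_{H, s \in [1,2]} B_H(s)^+$. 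Since the restricted field is a.s.\ continuous on a compact set one has $\e M < \infty$, and $\sup_{H, s \in [1,2]} \var B_H(s) = 4^{\overline H}$, so Borell--TIS gives $\p(M > x) \leq C e^{-c_0 x^2}$ for large $x$. Summing the resulting geometric-type series in $k$ (whose exponents grow like $2^{k(2 - 2\overline H)}$) yields (i).

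For part (ii), the same self-similarity rewrites the $k$th event as $\{\exists H : \sup_{s \in [1,2]} B_H(s) > a c^H\}$; since $c \geq 1$ forces $c^H \geq c^{\underline H}$, this is contained in $\{\sup_{H, s \in [1,2]} B_H(s) > a c^{\underline H}\}$, to which Borell--TIS applies, producing a bound $C e^{-c_0 a^2 (2^k T)^{2\underline H}}$; summation then gives (ii). The key conceptual point, and the only real subtlety, is the handling of the \emph{joint} supremum in $H$: a pointwise-in-$H$ argument would only control each marginal $\tau_H$. The joint self-similarity of the entire random surface $(H,t) \mapsto B_H(t)$ is precisely what allows one to compress the growing time window $[c, 2c]$ into the fixed compact window $[1,2]$ while retaining $H$ as an index, after which Gaussian concentration on a fixed compact domain finishes the argument.
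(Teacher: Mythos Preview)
Your proposal is correct and follows essentially the same route as the paper: dyadic decomposition of $(T,\infty)$, joint self-similarity \eqref{eq:self_similarity} to collapse each block onto $[\underline H,\overline H]\times[1,2]$, and Borell--TIS on that compact index set. The only cosmetic difference is that the paper divides through by the drift term (working with $B_H(t)/t^{2H}$) before rescaling, whereas you bound the drift by its infimum on each dyadic block and rescale $B_H$ directly; the resulting thresholds and exponents coincide.
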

\begin{proof}
Since the proofs of (i) and (ii) follow by the same idea, we focus only on (ii). Observe that, for $T\ge 1$,
\begin{eqnarray}
\p\Big(\sup_{H\in[\underline H,\overline H]}\tau^*_H(\infty,a) > T\Big)
&\le&
\p\Big(\sup_{H\in[\underline H,\overline H], t\ge T} B_H(t)-a t^{2H}>0\Big)\nonumber\\
&=&
\p\Big(\sup_{H\in[\underline H,\overline H], t\ge T} \frac{B_H(t)}{t^{2H}}>a\Big)\nonumber\\
&\le&
\sum_{k=0}^\infty
\p\Big(\sup_{H\in[\underline H,\overline H], t\in [2^kT,2^{k+1}T]} \frac{B_H(t)}{t^{2H}}>a\Big)\nonumber\\
&=&
\sum_{k=0}^\infty
\p\Big(\sup_{H\in[\underline H,\overline H], t\in [1,2]} \frac{B_H(2^kTt)}{(2^kTt)^{2H}}>a\Big)\nonumber\\
&=&
\sum_{k=0}^\infty
\p\Big(\sup_{H\in[\underline H,\overline H], t\in [1,2]} \frac{2^{Hk} T^H}{2^{2Hk}T^{2H}}\frac{B_H(t)}{t^{2H}}>a\Big)\label{ss}\\
&\le&
\sum_{k=0}^\infty
\p\Big(\sup_{H\in[\underline H,\overline H], t\in [1,2]} \frac{B_H(t)}{t^{2H}}>a2^{\underline H k} T^{\underline H}\Big),\nonumber
\end{eqnarray}
where (\ref{ss}) follows from self-similarity of the MvN field, cf.~\eqref{eq:self_similarity}.
By the continuity of the MvN field, in view of Borell inequality (see, e.g. \cite[Theorem~2.1]{Adl90}),
we know that
\[
\e \left(\sup_{H\in[\underline H,\overline H], t\in [1,2]} \frac{B_H(t)}{t^{2H}}\right)<\infty
\]
and for sufficiently large $T$, using that
$\sup_{H\in[\underline H,\overline H], t\in [1,2]}\var\left( \frac{B_H(t)}{t^{2H}}\right)=1$,
\[
\p\Big(\sup_{H\in[\underline H,\overline H], t\in [1,2]} \frac{B_H(t)}{t^{2H}}>a2^{\underline H k} T^{\underline H}\Big)
\le
2\exp\left( -\frac{ a^2    2^{2\underline H k} T^{2\underline H}}{8}   \right).
\]
Thus, there exists $C>0$ such that for sufficiently large $T$
\begin{eqnarray*}
\sum_{k=0}^\infty
\p\Big(\sup_{H\in[\underline H,\overline H], t\in [1,2]} \frac{B_H(t)}{t^{2H}}>a2^{\underline H k} T^{\underline H}\Big)
\le C \exp\left( -\frac{ a^2}{8} T^{2\underline H}  \right).
\end{eqnarray*}
This completes the proof.
\end{proof}

\begin{proposition}\label{prop:tau_continuity}
If $a\in\R$ and $T>0$ or $a>0$ and $T=\infty$, then
\begin{itemize}
\item[(i)] $\lim_{H\to H'} \tau_H(T,a) = \tau_{H'}(T,a)$, a.s.
\item[(ii)] $\lim_{H\to H'} \tau^*_H(T,a) = \tau_{H'}^*(T,a)$, a.s.
\end{itemize}
for any $H' \in(0,1)$.
\end{proposition}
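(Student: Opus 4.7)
The plan is to reduce both (i) and (ii) to the following standard argmax continuity principle: if $f_n \to f$ uniformly on a compact set $K$ and $f$ attains its maximum on $K$ at a unique point $t^*$, then $\argmax_K f_n \to t^*$. Combining this with the joint continuity of the MvN field in $(H,t)$ (Proposition~\ref{prop:PWZ_continuous}, together with smoothness of $D(H)$) and the a.s.\ uniqueness of $\tau_{H'}$ and $\tau^*_{H'}$ cited from \cite{Ferger1999uniqueness,Lifshits1982absolute}, the proposition follows once we confine the argmax to a compact time interval.

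First, I would handle the case $T < \infty$. Since $(H,t) \mapsto B_H(t)$ is a.s.\ continuous on $(0,1)\times[0,T]$, so are the maps $(H,t)\mapsto Y_H(t;a) = B_H(t) - at$ and $(H,t)\mapsto Y_H^*(t;a) = B_H(t) - at^{2H}$. On any compact $[\underline H,\overline H] \ni H'$ the continuity of these maps on the compact set $[\underline H,\overline H]\times[0,T]$ upgrades to uniform continuity, so $Y_H(\cdot;a) \to Y_{H'}(\cdot;a)$ and $Y_H^*(\cdot;a) \to Y_{H'}^*(\cdot;a)$ uniformly on $[0,T]$ as $H \to H'$. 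Combined with almost sure uniqueness of $\tau_{H'}(T,a)$ and $\tau^*_{H'}(T,a)$, the argmax continuity principle yields (i) and (ii) in this regime.

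For $T = \infty$ (with $a>0$) I would localize to a random compact interval using Proposition~\ref{prop:tau_uniform_tightness}. Fix $H' \in (0,1)$ and choose $0 < \underline H < H' < \overline H < 1$. For part (i), the bound $\p(\sup_{H\in[\underline H,\overline H]} \tau_H(\infty,a) > T) \le Ce^{-\gamma T^{2-2\overline H}}$ tends to $0$ as $T \to \infty$; since the event in question is monotone decreasing in $T$, continuity from above gives
\begin{equation*}
\p\Big(\sup_{H\in[\underline H,\overline H]} \tau_H(\infty,a) < \infty\Big) = 1.
\end{equation*}
On this full-probability event there exists a (random) $T^\star < \infty$ with $\tau_H(\infty,a) = \tau_H(T^\star,a)$ for every $H \in [\underline H,\overline H]$, so the previous paragraph applied with the deterministic horizon $T^\star$ (on the conditioning event) yields $\tau_H(\infty,a) \to \tau_{H'}(\infty,a)$ a.s. Part (ii) is identical, using part (ii) of Proposition~\ref{prop:tau_uniform_tightness} in place of part (i).

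The only non-trivial step is the argmax continuity principle itself, and it is essentially immediate: if $t_H := \argmax_{[0,T]} Y_H$, then any subsequential limit $t^\dagger$ of $t_H$ along $H \to H'$ lies in $[0,T]$ and satisfies $Y_{H'}(t^\dagger) = \lim Y_H(t_H) \ge \lim Y_H(s) = Y_{H'}(s)$ for every $s\in[0,T]$ (by uniform convergence), hence $t^\dagger = \tau_{H'}$ by uniqueness; compactness of $[0,T]$ then forces $t_H \to \tau_{H'}$. I anticipate the main bookkeeping obstacle to be the $T=\infty$ reduction, specifically verifying that the uniform tightness in Proposition~\ref{prop:tau_uniform_tightness} really does promote to an a.s.\ finite envelope for the argmaxes on the full Hurst neighborhood of $H'$, which is why I formulated it with continuity from above on a monotone event.
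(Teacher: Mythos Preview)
Your proposal is correct and follows essentially the same route as the paper: joint continuity of the field plus uniqueness of the argmax handles $T<\infty$ via an argmax continuity principle (the paper cites \cite[Lemma~2.9]{Seijo2011A} while you prove it directly), and for $T=\infty$ both you and the paper localize using Proposition~\ref{prop:tau_uniform_tightness} to reduce to the finite-horizon case. One cosmetic remark: your $T^\star$ is random, not deterministic, but since the argmax continuity principle is a purely analytic statement you may apply it pathwise on the full-probability event, which is what you intend; the paper instead writes this out via the increasing events $A_n=\{\sup_H\tau_H<n\}$ and continuity of measure, which amounts to the same thing.
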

\begin{proof}
In view of Proposition~\ref{prop:PWZ_continuous}, the random bivariate function $(H,t) \mapsto X_H(t)$ is continuous almost surely, this implies that also, for any fixed $a\in\R$, the functions $(H,t) \mapsto Y_H(t;a)$, and $(H,t) \mapsto Y^*_H(t;a)$ are continuous. Now, consider the case $a\in\R, T<\infty$ - in this case, (i) and (ii) follow from the fact that $Y_H(t;a)\to Y_{H'}(t;a)$, $Y^*_H(t;a)\to Y^*_{H'}(t;a)$ converge uniformly on $t\in[0,T]$, so the argmax functionals must also converge, see e.g. \cite[Lemma~2.9]{Seijo2011A}.

We  now show that (i) holds also when $a<0$ and $T=\infty$. Let $\mathcal A\subset(0,1)$ be any compact interval containing $H'$ and for $n\in\N$ let $A_n := \{\sup_{H\in\mathcal A} \tau_H(a,\infty) < n\}$. Moreover, we have $\{\cup_n A_n\} = \{\sup_{H\in\mathcal A} \tau_H(\infty,a) < \infty\}$, which according to Proposition~\ref{prop:tau_uniform_tightness}, is a set of full measure. We thus can write
\begin{align*}
\p(\lim_{H\to1/2} \tau_H(\infty,a) = \tau_{H'}(\infty,a)) & = \p\big( \lim_{H\to H'} \tau_H(\infty,a) = \tau_{H'}(\infty,a); \cup_{n=1}^\infty A_n\big) \\
& = \p\left(\bigcup_{n=1}^\infty \left\{\lim_{H\to H'} \tau_H(\infty,a) = \tau_{H'}(\infty,a); A_n\right\}\right) \\
& = \lim_{n\to\infty}\p\left(\lim_{H\to H'} \tau_H(\infty,a) = \tau_{H'}(\infty,a); A_n\right),
\end{align*}
where in the last line we used the continuity property of  probability measures for increasing sets. Finally, we notice that on the event $A_n$ we have $\tau_H(\infty,a) = \tau_H(n,a)$ for all $H\in\mathcal A$, thus
\begin{align*}
\p\left(\lim_{H\to H'} \tau_H(\infty,a) = \tau_{H'}(\infty,a); A_n\right) & = \p\left(\lim_{H\to H'} \tau_H(n,a) = \tau_{H'}(n,a); A_n\right) = \p(A_n),
\end{align*}
because we have already established that $\tau_H(T,a) \to \tau_{H'}(T,a)$ a.s. for any fixed $T<\infty$. This concludes the proof of (i) because $\p(A_n) \to 1$, as $n\to\infty$. The proof of (ii) is analogous.
\end{proof}

\begin{corollary}\label{coro:J3}
Let $0 < \underline H < \overline H < 1$,  $N\in\Z_+$. If $a\in\R$ and $T>0$ or $a<0$ and $T=\infty$, then for any $n\in\N$,
\begin{itemize}
\item[(i)] $\displaystyle\sup_{H\in\Hspan}\e \left|\frac{R_H(\tau_H(T,a); N)}{(H-\tfrac{1}{2})^{N/2}}\right|^n < \infty$, and
\item[(ii)] $\displaystyle\sup_{H\in\Hspan}\e \left|\frac{R_H(\tau^*_H(T,a); N)|}{(H-\tfrac{1}{2})^{N/2}}\right|^n < \infty$.
\end{itemize}
\begin{proof}[Proof of Corollary~\ref{coro:J3}]
From Lemma~\ref{lem:Var(R_H(t,N))} we know that for every $\ep>0$ there exists $C>0$ such that
\begin{align*}
\sup_{H\in\Hspan} \var R_H(1;N) \leq C (H-\tfrac{1}{2})^2\cdot t^{2(\underline H - \ep)},
\end{align*}
where $\ep>0$ is taken small enough so that $\underline H - \ep > 0$. Now, again, applying Lemma~\ref{lem:Var(R_H(t,N))} to Lemma~\ref{lem:sup_moments} we find that for every $\ep>0$ and $n\in\N$ there exists a constant $C'>0$ such that
\begin{align*}
\sup_{H\in\Hspan} \e \sup_{t\in[0,1]} |R_H(t;N)|^n \leq C (H-\tfrac{1}{2})^n(1 + \mu^{n-1}(\underline H-\ep) + \mu^n(\underline H-\ep)),
\end{align*}
where $\mu(\underline H-\ep) < \infty$. Finally, applying these two bounds and the result in Proposition~\ref{prop:tau_uniform_tightness}(i) to Lemma~\ref{lem:moments_sup_random_time} we find that there exists $C''>0$ such that
\begin{align*}
\sup_{H\in\Hspan} \e|R_H(\tau_H;N)|^n \leq C''(H-\tfrac{1}{2})^{nN/2}.
\end{align*}
The same reasoning applies to $\tau_H^*$, which completes the proof.
\end{proof}
\end{corollary}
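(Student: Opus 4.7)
The corollary asks for a uniform-in-$H$ bound on a particular moment of the Taylor remainder $R_H$ evaluated at the random times $\tau_H$ or $\tau_H^*$. The natural plan is to assemble four ingredients already developed in the paper: (1) the sharp $(H-\tfrac12)^{2N}$ scaling of the variance of the increments of $R_H(\cdot;N)$ provided by Lemma~\ref{lem:Var(R_H(t,N))}; (2) a Dudley/Borell-type bound converting Gaussian variance control into moment bounds for suprema over compact intervals (Lemma~\ref{lem:sup_moments}); (3) the uniform Gaussian-type tail of $\sup_{H\in\Hspan}\tau_H(\infty,a)$ and $\sup_{H\in\Hspan}\tau_H^*(\infty,a)$ from Proposition~\ref{prop:tau_uniform_tightness}; and (4) the generic inequality bounding moments of a Gaussian process sampled at a random time (Lemma~\ref{lem:moments_sup_random_time}).

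Concretely, I would first fix $\epsilon>0$ small enough that $\underline H - \epsilon > 0$. Specializing Lemma~\ref{lem:Var(R_H(t,N))} at $s=0$ (so that $R_H(0;N)=0$) extracts a clean variance bound $\var R_H(t;N) \leq C(H-\tfrac12)^{2N}\,(t^{2\underline H - \epsilon}+t^{2\overline H+\epsilon})$, valid uniformly in $H\in\Hspan$. Combined with the companion H\"older increment estimate of the same lemma, this would feed directly into Lemma~\ref{lem:sup_moments} and yield a bound of the form $\e\sup_{t\in[0,\Lambda]}|R_H(t;N)|^n \leq C'(H-\tfrac12)^{nN}\,P(\Lambda)$ for some polynomial $P$, uniformly in $H\in\Hspan$.

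The finite-horizon case $T<\infty$ is then immediate: since $\tau_H(T,a)\in[0,T]$ almost surely, taking $\Lambda = T$ gives $\e|R_H(\tau_H;N)|^n \leq C'(H-\tfrac12)^{nN}\,P(T)$, which after dividing by $(H-\tfrac12)^{nN/2}$ establishes (i); part (ii) is identical. For the infinite-horizon case I would invoke Lemma~\ref{lem:moments_sup_random_time}, plugging in the polynomial moment bound above together with the exponential tail $\p(\sup_{H\in\Hspan}\tau_H(\infty,a) > \Lambda)\leq Ce^{-\gamma \Lambda^{2-2\overline H}}$ from Proposition~\ref{prop:tau_uniform_tightness}(i). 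The super-polynomial decay dominates the polynomial growth of $P(\Lambda)$, so the resulting integral converges uniformly and yields $\e|R_H(\tau_H;N)|^n \leq C''(H-\tfrac12)^{nN}$. Part (ii) follows identically using Proposition~\ref{prop:tau_uniform_tightness}(ii) and the analogous tail $e^{-\gamma \Lambda^{2\underline H}}$.

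The main obstacle is bookkeeping rather than ideas: one must verify that every constant appearing in Lemma~\ref{lem:Var(R_H(t,N))}, Lemma~\ref{lem:sup_moments}, Proposition~\ref{prop:tau_uniform_tightness}, and Lemma~\ref{lem:moments_sup_random_time} is genuinely uniform in $H\in\Hspan$, and that the polynomial-in-$\Lambda$ supremum moment is comfortably killed by the Gaussian tail of $\sup_H\tau_H$. Once this is arranged, the exponent $nN$ we obtain is in fact stronger than the exponent $nN/2$ claimed; the weaker form stated in the corollary is what is needed downstream in the proofs of Theorem~\ref{thm:supremum_derivative_limit} and Theorem~\ref{thm:supremum_derivative_pickands_limit}, so no further sharpening is pursued.
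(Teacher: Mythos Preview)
Your proposal is correct and follows essentially the same route as the paper: both arguments combine the $(H-\tfrac12)^{2N}$ increment bound from Lemma~\ref{lem:Var(R_H(t,N))}, the supremum-moment estimate of Lemma~\ref{lem:sup_moments}, the uniform tail bound on $\tau_H$, $\tau_H^*$ from Proposition~\ref{prop:tau_uniform_tightness}, and the random-time moment bound of Lemma~\ref{lem:moments_sup_random_time}. Your explicit separation of the finite- and infinite-horizon cases and your remark that the argument actually delivers the sharper exponent $nN$ are both accurate refinements, but the underlying chain of lemmas is identical to the paper's.
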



\subsection{Proof of Theorem~\ref{thm:supremum_derivative_limit}}
In the following, for breviety let $Y_H(t) := Y_H(t;a)$, $\tau_H := \tau_H(T,a)$, and $Y(t) := Y_{1/2}(t)$, $\tau := \tau_{1/2}$. We have
\begin{align*}
Y(\tau_H) -  Y(\tau) = (Y_H(\tau_H) - Y_H(\tau)) + (Y_H(\tau)-Y(\tau)),
\end{align*}
We split the proof into three parts. In the first part of the proof we show that
\begin{equation}\label{eq:lem1:conv_0}
\lim_{H\to1/2} \e \left(\frac{Y_H(\tau_H) - Y_H(\tau)}{H-\tfrac{1}{2}}\right) = 0.
\end{equation}
In the second part of the proof we show that
\begin{equation}\label{eq:lem1:conv_EB_prim}
\lim_{H\to1/2} \e \left(\frac{Y_H(\tau) - Y(\tau)}{H-\tfrac{1}{2}}\right) = \e\left(\frac{\partial}{\partial H}Y_H(\tau)\Big|_{H=1/2}\right).
\end{equation}
Finally, in the last part of the proof we show that the claim in Theorem~\ref{thm:supremum_derivative_limit} holds.

{\bf Proof that Eq.~\eqref{eq:lem1:conv_0} holds.}
By definition, $Y_H(t) = D(H)X_H(t)+at$, so for any $t>0$, we have
\begin{align*}
Y_H(t) = at(D(H)-1) + D(H)Y(t) + D(H)X^{(1)}(t)(H-\tfrac{1}{2}) + D(H)R_H(t;2),
\end{align*}
where $R_H(t;2) := X_H(t)-\big[X(t)+X^{(1)}(t)(H-\tfrac{1}{2})\big]$; see \eqref{def:R_H(t;N)}. Furthermore,
\begin{align*}
Y_H(\tau_H)-Y_H(\tau) & = a(D(H)-1)(\tau_H-\tau) + D(H)\big[Y(\tau_H)-Y(\tau)\big] \\
& + D(H)(H-\tfrac{1}{2})\big[X^{(1)}(\tau_H)-X^{(1)}(\tau)\big] + D(H)\big[R_H(\tau_H;2)-R_H(\tau;2)\big].
\end{align*}
Since $Y(\tau) - Y(\tau_H) \geq 0$ and $Y_H(\tau_H)-Y_H(\tau)\geq 0$, then
\begin{eqnarray}
\nonumber\lefteqn{0 \leq \frac{Y_H(\tau_H)-Y_H(\tau)}{H-\tfrac{1}{2}} = a(\tau_H-\tau) \cdot \frac{D(H)-1}{H-\tfrac{1}{2}} + D(H)\big[X^{(1)}(\tau_H)-X^{(1)}(\tau)\big]} \\
\label{eq:J1J2J3}&& + D(H)\cdot\frac{R_H(\tau_H;2)-R_H(\tau;2)}{H-\tfrac{1}{2}} =: J_1(H) + J_2(H) + J_3(H).
\end{eqnarray}
So we need to show that $\e J_i(H) \to 0$, as $H\to0$ for $i\in\{1,2,3\}$. According to Proposition~\ref{prop:tau_continuity}(i) we have $J_1(H)\to0$ a.s. In view of de la Vallée-Poussin theorem, in order to show $\e J_1(H)\to0$, it suffices to show that $\sup_{H\in[1/3,2/3]}\e J_1^2 <\infty$. According to Proposition~\ref{prop:tau_uniform_tightness}, for a compact neighborhood of $H=\tfrac{1}{2}$, for example $H\in[\tfrac{1}{3},\tfrac{2}{3}]$, there exist positive constants $C,\gamma,\beta$ such that $\p(\tau_H >T) \leq C e^{-\gamma T^\beta}$, so for all $H\in[1/3,2/3]$:
\begin{align*}
\e\tau_H^2 = \int_0^\infty 2t\,\p(\tau_H>t){\rm d}t \leq 2C\int_0^\infty te^{-\gamma t^\beta} {\rm d}t.
\end{align*}
Since the right-hand-side is finite and independent of $H$, then $\sup_{H\in[1/3,2/3]} \e \tau_H^2 < \infty$ and $\e J_1(H) \to 0$.

Now, according to Proposition~\ref{prop:PWZ_continuous} and Proposition~\ref{prop:tau_continuity}(i) we have $J_2(H)\to0$ a.s. Moreover, combining Proposition~\ref{prop:tau_uniform_tightness}(i) with Lemma~\ref{lem:moments_sup_random_time}, and the fact that all moments of the supremum $\sup_{t\in[0,1]}X^{(1)}(t)$ exist, we find that the second moments, $\e[X^{(1)}(\tau_H)]^2$, $\e[X^{(1)}(\tau)]^2$ are uniformly bounded for all $H$ close enough to $\tfrac{1}{2}$; again, de la Vallée-Poussin theorem it implies that $\e J_2(H)\to0$.

Finally, we observe that
\begin{align*}
\e |J_3(H)| \leq \frac{\e|R_H(\tau_H;2)|}{H-\tfrac{1}{2}}+\frac{\e|R_H(\tau;2)|}{H-\tfrac{1}{2}}.
\end{align*}
Due to Corollary~\ref{coro:J3}, both terms above tend to $0$, as $H\to\tfrac{1}{2}$, therefore $\e J_3(H) \to 0$.

{\bf Proof that Eq.~\eqref{eq:lem1:conv_EB_prim} holds.} By virtue of Proposition~\ref{prop:nth_derivative_limit_PWZ} and Proposition~\ref{prop:tau_continuity}(i) combined, it is clear that
\begin{equation*}
\frac{Y_H(\tau) - Y(\tau)}{H-\tfrac{1}{2}} \to \frac{\partial}{\partial H} Y_H(\tau)\Big|_{H=1/2} \text{ a.s.}
\end{equation*}
Moreover, we recognize that $Y_H(\tau) - Y(\tau) = R_H(\tau;1)$, see the definition in \eqref{def:R_H(t;N)}, so in view of de la Vallée-Poussin theorem, in order to show that \eqref{eq:lem1:conv_EB_prim} holds, it is enough to show that
\begin{align*}
\sup_{H\in[1/3,2/3]}\,\e\left(\frac{R_H(\tau;1)}{H-\tfrac{1}{2}}\right)^2 < \infty,
\end{align*}
which follows from Corollary~\ref{coro:J3}(i).

{\bf Proof that Eq.~\eqref{eq:supremum_derivative_limit_integral} holds.} Notice that for any $t\in\R_+$ it holds that
\begin{equation*}
\frac{\partial}{\partial H}Y_H(t) = \frac{\partial}{\partial H} (B_H(t)-at) = B_H^{(1)}(t).
\end{equation*}
Combining the results in \eqref{eq:lem1:conv_0} and  \eqref{eq:lem1:conv_EB_prim}, the Leibniz formula derived in \eqref{eq:Bn_linear_combination}, and the fact that $D'(1/2)=1$ (cf.~\eqref{eq:values_of_D^\n(1/2)}) we obtain
\begin{equation}\label{eq:supremum_derivative_limit}
\mathscr M'_{1/2}(T,a) = \e \left[X(\tau) + X^{(1)}(\tau)\right].
\end{equation}
Now, using the PWZ representation for $X^{(1)}$ in \eqref{def:PWZ} we find that
\begin{equation*}
X^{(1)}(\tau) = \int_{-\infty}^\tau\left((\tau-s)^{-1}-(-s)^{-1}\right)B(s){\rm d}s + \log(\tau) B(\tau) - \int_0^\tau \frac{B(\tau)-B(s)}{\tau-s}{\rm d}s.
\end{equation*}
Since $\tau$ is independent of $\{B(s):s<0\}$ then the expected value of the first integral is $0$, and
\begin{align*}
\e X^{(1)}(\tau) & = \e \left[\log(\tau) B(\tau) - \int_0^\tau \frac{B(\tau)-B(s)}{\tau-s}{\rm d}s \right] \\
& = \e \left[\log(\tau) (B(\tau)-a\tau) + a\tau\log(\tau) - \int_0^\tau \frac{(B(\tau)-a\tau)-(B(s)-as)}{\tau-s}{\rm d}s - a\tau \right] \\
& = \e \left[\log(\tau) Y(\tau) + a\tau(\log(\tau)-1) - \int_0^\tau \frac{Y(\tau)-Y(s)}{\tau-s}{\rm d}s\right].
\end{align*}
Now, we recognize that
\begin{align*}
\e \left(\int_0^\tau \frac{Y(\tau)-Y(s)}{\tau-s}{\rm d}s\right) & = \e\left(\e\left(\int_0^\tau \frac{Y(t)-Y(s)}{t-s}{\rm d}s\,\Big\vert\, \tau = t, Y(\tau)=y \right)\right)\\
& = \e\left( I(\tau,Y(\tau))\right),
\end{align*}
with $I(t,y)$ defined in \eqref{def:I}. Since $\e X(\tau) = \e(Y(\tau)+a\tau)$, then from \eqref{eq:supremum_derivative_limit} we obtain
\begin{align*}
\mathscr M'_{1/2}(T,a) = \e\Big[Y(\tau)(1+\log(\tau)) + a\tau\log(\tau) - I(\tau,Y(\tau))\Big].
\end{align*}
The claim now follows because $(\tau,Y(\tau))$ has joint density $p(t,y;T,a)$,
cf. Eq.~\eqref{eq:density:T}.
\QED


\subsection{Proof of Theorem~\ref{thm:supremum_derivative_pickands_limit}}
In the following, for brevity let $Z_H(t) := \sqrt{2}B_H(t) - at^{2H}$, $\tau^*_H := \tau^*_H(T,\frac{a}{\sqrt{2}})$, so that $\sup_{t\in[0,T]} Z_H(t) = Z_H(\tau^*_H)$. Additionally we denote $Z(t) := Z_{1/2}(t)$, $\tau^* := \tau^*_{1/2}$. Notice that
\begin{align*}
e^{Z_H(\tau^*_H)} -  e^{Z(\tau^*)} = \left(e^{Z_H(\tau^*_H)} - e^{Z_H(\tau^*)}\right) + \left(e^{Z_H(\tau^*)}-e^{Z(\tau^*)}\right).
\end{align*}
We split the proof into three parts. First part of the proof is to show that
\begin{equation}\label{eq:lem1:conv_0_pickands}
\lim_{H\to1/2} \e \left(\frac{e^{Z_H(\tau^*_H)} - e^{Z_H(\tau^*)}}{H-\tfrac{1}{2}}\right) = 0.
\end{equation}
In the second part of the proof we show that
\begin{equation}\label{eq:lem1:conv_pickands_prim}
\lim_{H\to1/2} \e \left(\frac{e^{Z_H(\tau^*)} - e^{Z(\tau^*)}}{H-\tfrac{1}{2}}\right) = \e \left(\frac{\partial}{\partial H} e^{Z_H(\tau^*)}\Big|_{H=1/2}\right).
\end{equation}
Finally, in the last part of the proof we  show that the claim of Theorem~\ref{thm:supremum_derivative_pickands_limit} holds.

{\bf Proof that Eq.~\eqref{eq:lem1:conv_0_pickands} holds.}
Since $Z_H(\tau^*_H) - Z_H(\tau^*) \geq 0$, then, using the mean value theorem we find that
\begin{equation}\label{eq:mvt_exp}
\e\left[e^{Z_H(\tau^*_H)} - e^{Z_H(\tau^*)}\right] \leq \e\left[\Big(Z_H(\tau^*_H)-Z_H(\tau^*)\Big) \cdot e^{Z_H(\tau^*_H)}\right],
\end{equation}
so it suffices to show that the bound above converges to $0$.

By definition $Z_H(t) = \sqrt{2}D(H)X_H(t)-at^{2H}$, so for any $t>0$, we have
\begin{align*}
Z_H(t) = a(D(H)t-t^{2H}) + D(H)Z(t) + \sqrt{2}D(H)\left(X^{(1)}(t)(H-\tfrac{1}{2}) + R_H(t;2)\right),
\end{align*}
where $R_H(t;2) := X_H(t)-\big[X(t)+X^{(1)}(t)(H-\tfrac{1}{2})\big]$ was defined in \eqref{def:R_H(t;N)}. Furthermore,
\begin{align*}
Z_H(\tau^*_H)-Z_H(\tau^*) & = a\left[D(H)(\tau_H^*-\tau^*)-\left((\tau_H^*)^{2H}-(\tau^*)^{2H}\right)\right] + D(H)\big[Z(\tau^*_H)-Z(\tau^*)\big] \\
& + D(H)(H-\tfrac{1}{2})\big[X^{(1)}(\tau^*_H)-X^{(1)}(\tau^*)\big] + D(H)\big[R_H(\tau^*_H;2)-R_H(\tau^*;2)\big].
\end{align*}
Now, for $H\in(0,1)$, $t\in\R_+$ we define
\begin{align*}
U_H(t) := D(H) \cdot \frac{t - t^{2H}}{H-\tfrac{1}{2}} + t^{2H}\cdot\frac{D(H)-1}{H-\tfrac{1}{2}}.
\end{align*}
Since $Z(\tau^*)-Z(\tau_H^*)\geq 0$ and $Z_H(\tau_H)-Z_H(\tau)\geq 0$, then
\begin{align*}
0 \leq \frac{Z_H(\tau_H^*)-Z_H(\tau^*)}{H-\tfrac{1}{2}} & \leq aJ_1(H) + D(H)\Big(J_2(H)+J_3(H)\Big),
\end{align*}
where
\begin{align*}
J_1(H) := U_H(\tau_H^*)-U_H(\tau^*), \quad J_2(H) := X^{(1)}(\tau^*_H)-X^{(1)}(\tau^*), \quad J_3(H) := \frac{R_H(\tau^*_H;2)-R_H(\tau^*;2)}{H-\tfrac{1}{2}}.
\end{align*}
The proofs that $\e J_i(H)\to0$, $i\in\{2,3\}$, as $H\to\tfrac{1}{2}$ are analogous to the corresponding statements in the proof of Theorem~\ref{thm:supremum_derivative_limit}, cf.~Eq.~\eqref{eq:J1J2J3}. Thus, we show only that $\e J_1(H)\to0$, as $H\to\tfrac{1}{2}$. We have
\begin{align*}
\lim_{H\to1/2} U_H(t) = -2t\log(t) + t =: U(t)
\end{align*}
and the convergence is uniform for any compact subset of $\R_+$. Moreover,
\begin{align*}
|U_H(\tau_H^*) - U_H(\tau^*)| \leq |U_H(\tau_H^*) - U(\tau_H^*)| + |U(\tau_H^*)-U(\tau_H)| + |U_H(\tau^*)-U(\tau^*)|,
\end{align*}
where each of the terms of the sum above converges to $0$ due to uniform convergence $U_H(t)\to U_H(t)$, continuity of $U$ and the fact that $\tau_H^*\to\tau^*$ a.s. (see Proposition~\ref{prop:tau_continuity}(ii)), so we have $J_1(H)\to 0$ a.s. Using the mean value theorem we find that, with $\underline H := \min\{\tfrac{1}{2},H\}$, $\overline H := \max\{\tfrac{1}{2},H\}$,
\begin{equation}\label{eq:bound_with_mvt}
\left|\frac{t - t^{2H}}{H-\tfrac{1}{2}}\right| \leq \sup_{\tilde H\in\Hspan}|2\log(t)t^{2\tilde H}| \leq 2|\log(t)|(1+t^{2\overline H}).
\end{equation}
Now, we have that
\begin{equation*}
U_H^2(t) \leq 2D^2(H)\left(\frac{t - t^{2H}}{H-\tfrac{1}{2}}\right)^2 + 2t^{4H}\left(\frac{D(H)-1)}{H-\tfrac{1}{2}}\right)^2,
\end{equation*}
so using the bound in \eqref{eq:bound_with_mvt} and Proposition~\ref{prop:tau_uniform_tightness}(ii), we find that the second moments $\e [U_H(\tau_H^*)]^2, \e [U_H(\tau^*)]^2 < \infty$ are uniformly bounded for all $H$ close enough to $\tfrac{1}{2}$. In view of de la Vallée-Poussin theorem, this observation combined with the fact that $J_1(H)\to0$ a.s. imply that $\e J_1(H)\to0$.

{\bf Proof that Eq.~\eqref{eq:lem1:conv_pickands_prim} holds.} By virtue of Proposition~\ref{prop:nth_derivative_limit_PWZ} and \eqref{prop:tau_continuity} combined, it is clear that
\begin{align*}
\frac{e^{Z_H(\tau^*)} - e^{Z(\tau^*)}}{H-\tfrac{1}{2}} \to \frac{\partial}{\partial H} e^{Z_H(\tau^*)}\Big|_{H=1/2} \quad \text{a.s.}
\end{align*}
In view of de la Vallée-Poussin theorem, it suffices to show that some $1+\ep$ with $\ep>0$, the absolute moment of the pre-limit above is bounded for all $H$ close enough to $1/2$. Using the mean value theorem as in \eqref{eq:mvt_exp} and applying H\"{o}lder's inequality afterwards, we find that
\begin{align*}
\e\left(\frac{e^{Z_H(\tau^*)} - e^{Z(\tau^*)}}{H-\tfrac{1}{2}}\right)^{1+\ep} \leq \sup_{\tilde H\in\Hspan} \e\bigg(|Z_H(\tau^*)-Z(\tau^*)|\cdot \exp\{Z_{\tilde H}(\tau_{\tilde H}^*)\}\bigg)^{1+\ep} \\
\leq \left(\sup_{\tilde H\in\Hspan} \e|Z_H(\tau^*)-Z(\tau^*)|^{(1+\ep)q}\right)^{1/q} \cdot \left(\sup_{\tilde H\in\Hspan} \e\,\exp\{(1+\ep)pZ_{\tilde H}(\tau_{\tilde H}^*)\}\right)^{1/p},
\end{align*}
where $p,q>1$, and $1/p + 1/q = 1$. Now, according to Proposition~\ref{pit.ine}, if we take  $\ep>0, p>1$ small enough to satisfy $(1+\ep)p < a$, then the second term remains bounded, as $H\to1/2$. For the boundedness of the first term, see that $Z_H(\tau^*) - Z(\tau^*) = \sqrt{2}R_H(\tau^*;1)$ and
\begin{align*}
\sup_{\tilde H\in\Hspan}\e|R_H(\tau^*;1)|^{(1+\ep)q} < \infty
\end{align*}
for arbitrarily large $q>1$ due to Corollary~\ref{coro:J3}.

{\bf Proof that Eq.~\eqref{eq:supremum_derivative_pickands_limit_integral} holds.} Notice that for any $t\in\R_+$ it holds that
\begin{align*}
\frac{\partial}{\partial H} e^{Z_H(t)} & = \sqrt{2}\left[\frac{\partial}{\partial H} (B_H(t)-at^{2H})\right]e^{Z_H(t)} \\
& = \sqrt{2}\left(B^{(1)}_H(t) - \frac{2a}{\sqrt{2}}\cdot t^{2H}\log(t)\right)\exp\left(\sqrt{2}B_H(t)-at^{2H}\right).
\end{align*}
Combining the results in \eqref{eq:lem1:conv_0_pickands} and  \eqref{eq:lem1:conv_pickands_prim}, the Leibniz formula derived in \eqref{eq:Bn_linear_combination}, and the fact that $D'(\tfrac{1}{2})=1$, cf.~\eqref{eq:values_of_D^\n(1/2)}, we obtain
\begin{equation*}
\mathscr P'_{1/2}(T,a) = \e \left[\sqrt{2}\left(X(\tau) + X^{(1)}(\tau) - \frac{2a}{\sqrt{2}} \cdot \tau\log(\tau)\right)\exp\left(\sqrt{2}X(\tau)-a\tau\right)\right].
\end{equation*}
The rest of the proof is analogous to the third part of the proof of Theorem~\ref{thm:supremum_derivative_limit}.
\QED


\appendix
\section{Paley-Wiener-Zygmund representation of ${\mathcal L}^2$ stochastic integrals}\label{appendixA}
One of the core tools used in this contribution is the equivalence between MvN and PWZ representation of the fractional Brownian field and its derivatives introduced in Section~\ref{s:MvN}. In this appendix we explain how to find the Paley-Wiener-Zygmund representation of ${\mathcal L}^2$ stochastic integral for a general class of processes.

Define formally a stochastic process $\{\xi(t):t\in\R_+\}$
\begin{equation}\label{def:xi}
\xi(t):= P_1(t)+P_2(t), \quad P_1(t):=\int_{-\infty}^0 \psi(t,s)\,{\rm d}B(s), \ P_2(t):=\int_0^t\phi(t,s)\,{\rm d}B(s).
\end{equation}
For a moment we may think about the integrals in  ${\mathcal L}^2$ sense.
The processes $P_1$ and $P_2$ are well defined if
\begin{equation}\label{eq:warunki}
\int_{-\infty}^0\psi(t,s)^2\,{\rm d}s<\infty,\quad \int_{-\infty}^t\phi(t,s)^2\,ds<\infty, \quad \text{for all} \ t\in\R_+.
\end{equation}

Suppose   $ \{B (t): t \in \R \} $
 is a Brownian motion  on a probability space $(\Omega,\mathcalF,\p)$.
 Let $T<0$. For $k=1,2,\ldots$ let  $M_k:=|\lceil T2^k\rceil|$. Define integral sums for $ P_1(t,T):=\int_{T}^0 \psi(t,s)\,{\rm d}B(s)$ by
\begin{equation}\label{eq:Pk1}
P_{k,1}(t,T):=\sum_{i=1}^{M_k-1}\psi\big(t,-\tfrac{i}{2^k}\big)\Big(B(-\tfrac{i}{2^k}\big)-B\big(-\tfrac{i+1}{2^k}\big)\Big).
\end{equation}
Similarly, set $N_k=\lfloor t2^k\rfloor$ and define  integral sums for $P_2(t)$ by
\begin{equation}\label{eq:Pk2}
P_{k,2}(t)=\sum_{i=0}^{N_k-1}\phi\big(t,\tfrac{i+1}{2^k}\big)\Big(B\big(\tfrac{i+1}{2^k}\big)-B\big(\tfrac{i}{2^k}\big)\Big).
\end{equation}
Clearly we have
\begin{equation}\label{eq:convergence:Pk1Pk2}
P_{k,1}(t,T)\convLs P_1(t,T)=\int_{T}^0\psi(t,s)\,{\rm d}B(s), \quad \text{and} \quad P_{k,2}(t)\convLs P_2(t)=\int_0^t\phi(t,s)\,{\rm d}B(s).
\end{equation}
We denote $\psi'(t,s)=\frac{\partial}{\partial s}\psi(t,s)$
and $\phi'(t,s)=\frac{\partial}{\partial s}\phi(t,s)$. In what follows, we introduce the following assumptions
\begin{itemize}
  \item[(A)] For each $t>0$, the function $s\mapsto\psi(t,s)$ belongs to {$C^1(-\infty,0)$} and there exist constants $C,\ep>0$ such that
  \begin{itemize}
  \item[$\bullet$] $|\psi(t,s)|\le C|s|^{-1/2+\epsilon}$, $|\psi'(t,s)|\le C|s|^{-3/2+\epsilon}$, \ as $s\uparrow 0$,
  \item[$\bullet$] $|\psi(t,T)|\le C|T|^{-1/2-\ep}$, $|\psi'(t,T)|\le C|T|^{-3/2-\ep}$, \ as $T\downarrow-\infty$.
  \end{itemize}
  \item[(B)] For each $t>0$, the function $s\mapsto\phi(t,s)$ belongs to $C^1[0,t)$ and there exist constants $C,\ep>0$ such that
   \begin{itemize}
  \item[$\bullet$] $|\phi(t,s)|\le C|t-s|^{-1/2+\ep}$, $|\phi'(t,s)|\le C|t-s|^{-3/2+\epsilon}$, as $s\uparrow t$.
  \end{itemize}
\end{itemize}
 Note that  conditions (A) and (B) imply \eqref{eq:warunki}.
Conditions (A) and (B) were formulated having in mind 
  $\psi=f_H^{(n)}$ and $\phi=g_H^{(n)}$, which were defined in \eqref{def:f_H^\n}.
\begin{lemma}\label{lem:PWZ_lemma}
  Let functions $\psi(t,s)$, $\phi(t,s)$ satisfy the assumptions (A) and (B)
  respectively.
   Then, with $\xi(t)$ defined in \eqref{def:xi}, for each $t\geq0$,
   $\xi(t)=\tilde{P}_1(t)+\tilde{P}_2(t)$ a.s., where
   $\tilde{P}_1(t)=-\int_{-\infty}^0\psi'(t,s)B(s)\,ds$ and
   $\tilde{P}_2(t)=\phi(t,0)B(t)+\int_0^t\phi'(t,s)(B(t)-B(s))\,ds$.
\end{lemma}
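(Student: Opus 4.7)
The plan is to work with the dyadic Riemann sum approximations $P_{k,1}(t,T)$ and $P_{k,2}(t)$ from \eqref{eq:Pk1}--\eqref{eq:Pk2}, apply summation by parts (Abel's transformation) to each of them, identify their a.s.\ pointwise limits as $k\to\infty$ with the PWZ expressions, and then match these with the $\mathcal L^2$ limits in \eqref{eq:convergence:Pk1Pk2} via uniqueness of limits. A final step sends $T\downarrow -\infty$ to turn the localized identity into the full statement.

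Concretely, setting $s_i:=-i/2^k$, Abel's transformation together with the mean value theorem applied to $\psi(t,\cdot)$ on each cell $[s_i,s_{i-1}]$ yields
\[
P_{k,1}(t,T) = \psi(t,s_1)B(s_1) - \psi(t,s_{M_k-1})B(s_{M_k}) - \sum_{i=2}^{M_k-1}\frac{\psi'(t,\xi_i)}{2^k}\,B(s_i),
\]
with $\xi_i\in(s_i,s_{i-1})$. For $P_{k,2}$ I would first replace $B(\tilde s_i)$ by $B(\tilde s_i)-B(t)$ inside the Brownian increments (this is legitimate since telescoping a constant yields zero), so that the endpoint term near the singularity $s=t$ carries the small quantity $B(\tilde s_{N_k})-B(t)$ rather than $B(t)$ itself. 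Writing $\tilde s_j:=j/2^k$, summation by parts then gives
\[
P_{k,2}(t) = \phi(t,\tilde s_{N_k})\bigl[B(\tilde s_{N_k})-B(t)\bigr] + \phi(t,\tilde s_1)B(t) - \sum_{j=1}^{N_k-1}\frac{\phi'(t,\eta_j)}{2^k}\bigl[B(\tilde s_j)-B(t)\bigr],
\]
with $\eta_j\in(\tilde s_j,\tilde s_{j+1})$.

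Next, fixing $\delta\in(0,\epsilon)$ and invoking the a.s.\ local $(1/2-\delta)$-Hölder continuity of Brownian motion, assumptions (A) and (B) yield three facts. First, the singular boundary contributions vanish, since $|\psi(t,s_1)B(s_1)|=O(|s_1|^{\epsilon-\delta})\to 0$ and $|\phi(t,\tilde s_{N_k})[B(\tilde s_{N_k})-B(t)]|=O(|t-\tilde s_{N_k}|^{\epsilon-\delta})\to 0$. Second, the regular boundary terms converge to $\psi(t,T)B(T)$ and $\phi(t,0)B(t)$ by continuity of $\psi(t,\cdot)$ at $T$ and of $\phi(t,\cdot)$ at $0$. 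Third, the remaining sums are Riemann sums converging to the improper Riemann integrals $\int_T^0\psi'(t,s)B(s)\,{\rm d}s$ and $\int_0^t\phi'(t,s)[B(t)-B(s)]\,{\rm d}s$, whose integrands are a.s.\ $O(|s|^{-1+\epsilon-\delta})$ near $s=0$ and $O(|t-s|^{-1+\epsilon-\delta})$ near $s=t$, hence absolutely integrable. Combining these with the $\mathcal L^2$ convergences \eqref{eq:convergence:Pk1Pk2} (passing to an a.s.~convergent subsequence and using uniqueness of limits) gives
\[
P_1(t,T) = -\psi(t,T)B(T) - \int_T^0\psi'(t,s)B(s)\,{\rm d}s \qquad\text{and}\qquad P_2(t) = \tilde P_2(t) \quad \text{a.s.}
\]
Letting $T\downarrow-\infty$: the left-hand side of the first identity converges in $\mathcal L^2$ to $P_1(t)$ by the tail bound in (A), while the law of the iterated logarithm $|B(T)|=O(|T|^{1/2}\sqrt{\log\log|T|})$ together with the same tail bound forces $\psi(t,T)B(T)\to 0$ a.s.\ and guarantees that $\psi'(t,s)B(s)$ is a.s.\ integrable on $(-\infty,0)$. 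Hence $P_1(t)=\tilde P_1(t)$ a.s., and adding $P_2(t)=\tilde P_2(t)$ completes the proof.

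The main obstacle is the simultaneous control near the singular endpoints: the blow-ups of $\psi,\psi'$ at $s=0$ and of $\phi,\phi'$ at $s=t$ must be compensated by the Hölder regularity of $B$, both to kill the boundary contributions produced by summation by parts and to make the limiting improper Riemann integrals absolutely convergent. This balance is exactly what the positive margin $\epsilon>0$ in assumptions (A) and (B) is tailored to provide.
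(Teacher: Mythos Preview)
Your proposal is correct and follows essentially the same route as the paper: Abel summation on the dyadic Riemann sums, mean value theorem to convert coefficient differences into $\psi'$ and $\phi'$, a.s.\ H\"older continuity of $B$ to kill the singular boundary terms and to justify convergence of the resulting Riemann sums to improper integrals, identification with the $\mathcal L^2$ limits via uniqueness, and finally $T\downarrow-\infty$ via the growth of $B$ at infinity. The only cosmetic differences are that you invoke the subsequence argument explicitly (the paper leaves this implicit) and you use the sharp LIL bound where the paper is content with $|B(T)|\le C|T|^{1/2+\epsilon}$; neither changes the substance.
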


\begin{proof}
  Set $N=\lfloor 2^kt\rfloor$. Then, the partial sum $P_{k,2}(t)$
 in  \eqref{eq:Pk2} can be rewritten as follows
  \begin{eqnarray*}
    \lefteqn{\sum_{i=0}^{N-1}\phi\big(t,\tfrac{i+1}{2^k}\big)\Big(B\big(\tfrac{i+1}{2^k}\big)-B\big(\tfrac{i}{2^k}\big)\Big)}\\
    &&=\sum_{i=1}^{N-1}\Big(\phi\big(t,\tfrac{i}{2^{k}}\big)-\phi\big(t,\tfrac{i+1}{2^{k}}\big)\Big)B\big(\tfrac{i}{2^k}\big)+\phi\big(t,\tfrac{N}{2^k}\big)B\big(\tfrac{N}{2^k}\big)\\
    &&=\sum_{i=1}^{N-1}\Big(\phi\big(t,\tfrac{i}{2^{k}}\big)-\phi\big(t,\tfrac{i+1}{2^{k}}\big)\Big)\Big(B\big(\tfrac{i}{2^k}\big)-B(t)\Big)+\phi\big(t,\tfrac{N}{2^k}\big)B\big(\tfrac{N}{2^k}\big)\\
    &&\qquad +B(t)\sum_{i=1}^{N-1}\Big(\phi\big(t,\tfrac{i}{2^{k}}\big)-\phi\big(t,\tfrac{i+1}{2^{k}}\big)\Big).
  \end{eqnarray*}
  We now use the mean value theorem
  $\phi(t,\frac{i}{2^k})-\phi(t,\frac{i+1}{2^k})=-\frac{1}{2^k}\phi'(t,\theta_{k,i})$ for $\theta_{k,i}\in (\frac{i}{2^k},\frac{i+1}{2^k})$ and hence
   the above equals to
   $$ \frac{1}{2^k}\sum_{i=1}^{N-1}\phi'(t,\theta_{k,i})\Big(B(t)-B\big(\tfrac{i}{2^k}\big)\Big)
   +B(t)\phi\big(t,\tfrac{1}{2^k}\big)+
   \phi\big(t,\tfrac{N}{2^k}\big)\Big(B\big(\tfrac{N}{2^k}\big)-B(t)\Big)=
\tilde{P}_{k,2}(t).$$
Finally, under condition (B), as $k\to\infty$ we have
   $$\tilde{P}_{k,2}(t) \to \tilde{P}_2(t)=\int_0^t\phi'(t,s)(B(t)-B(s))\,ds+\phi(t,0)B(t) \quad \text{a.s.}$$
On the way we use that $B(t)$ is a realization of the Brownian motion, whose trajectories are almost surely H{\"o}lder continuous, i.e. for every $\ep>0$ and compact set $K\subset\R$, there exists some $C>0$ such that
\begin{equation}\label{eq:appendixA_holder}
|B(t)-B(s)|\le C |t-s|^{1/2-\ep} \text{ for all } t,s\in K \quad \text{a.s.}
\end{equation}
 Now, since the integral sums $\tilde{P}_{k,2}(t)$ converge in probability (also in ${\mathcal L}^2$) to $\tilde{P}_2(t)$, cf.~\eqref{eq:convergence:Pk1Pk2}, therefore ${P}_2(t) = \int_0^t\phi'(t,s)(B(t)-B(s))\,ds+\phi(t,0)B(t)$ a.s., which completes the proof for the positive part.

  Now let $T <0$ and set $M=|\lceil T2^k\rceil|$. The integral sums
  for $P_1(t,T)$ in Eq.~\eqref{eq:Pk1}
  can be transformed to
  \begin{eqnarray*}
  \lefteqn{\sum_{i=1}^{M-1}\psi\big(t,-\tfrac{i}{2^k}\big)
    \Big(B\big(-\tfrac{i}{2^k}\big)-B\big(-\tfrac{i+1}{2^k}\big)\Big)}\\
  &&=
\psi\big(t,-\tfrac{1}{2^k}\big)B\big(-\tfrac{1}{2^k}\big)+
  \sum_{i=1}^{M-2}\Big(\psi\big(t,-\tfrac{i+1}{2^k}\big)-\psi\big(t,-\tfrac{i}{2^k}\big)\Big)B\big(-\tfrac{i}{2^k}\big)
  -\psi\big(t,\tfrac{M-1}{2^k}\big)B\big(-\tfrac{M}{2^k}\big)\\
  &&=
\psi\big(t,-\tfrac{1}{2^k}\big)B\big(-\tfrac{1}{2^k}\big)-
  \frac{1}{2^k}\sum_{i=1}^{M-2}\psi'(t,\theta_{k,i})
  B\big(-\frac{i}{2^k}\big)-\psi\big(t,-\tfrac{M-1}{2^k}\big)B\big(-\tfrac{M}{2^k}\big)=\tilde{P}_{k,1}(t,T),
\end{eqnarray*}
for some $\theta_{k,i}\in (-\tfrac{i+1}{2^k},-\tfrac{i}{2^k})$. Now, under condition (A), and due to the H\"{o}lder continuity of Brownian motion, cf.~\eqref{eq:appendixA_holder} we find that, as $k\to\infty$:
\begin{itemize}
\item[$\bullet$] $\psi(t,-\frac{1}{2^k})B(-\frac{1}{2^k})\to0$ \ a.s.,
\item[$\bullet$] $\psi(t,-\frac{M-1}{2^k})B(-\frac{M}{2^k})\to \psi(t,T)B(T)$ \ a.s.,
\item[$\bullet$] $\sum_{i=1}^{M-2}\frac{1}{2^k}\psi'(t,\theta_{k,i})
  B(-\frac{i}{2^k})\to \int_T^0\psi'(t,s)B(s)\,{\rm d}s$ \ a.s.
\end{itemize}
Therefore, as $k\to\infty$ we have
$$P_{k,1}(t,T) \to \tilde{P}_1(t,T)=-\int_T^0\psi'(t,s)B(s)\,{\rm d}s+\psi(t,T)B(T)\quad \text{a.s.}$$
We emphasize that the condition (A) ensures the convergence of the integral around $s=0$. Finally, the condition (A) together with the property for Brownian motion (which is implied by the law of iterated logarithm) that for every $\ep>0$ there exists $C>0$ such that $|B(T)|\le C|T|^{1/2+\epsilon}$ for $T\downarrow -\infty$ almost surely, we have $\psi(t,T)B(T)\to 0$ a.s. for $T\to-\infty$.
Similarly it ensures the convergence of the integral around $s=-\infty$.
The proof is completed.
\end{proof}

\begin{remark}
It follows from the proof of Lemma~\ref{lem:PWZ_lemma} that the integrals $\int \psi(t,s)\,dB(s)$ and $\int \phi(t,s)\,dB(s)$ can be understood in the pointwise sense (as well as in the mean square sense). This is because $P_{k,1}(t,T)=\tilde{P}_{k,1}(t,T)$, $P_{k,2}(t)=\tilde{P}_{k,2}(t)$ and almost surely we have $\tilde{P}_{k,2}(t) \to \tilde{P}_{2}(t)$, and $\tilde{P}_{k,1}(t,T)\to \tilde{P}_{1}(t,T)$. Hence $P_{2}(t)=\tilde{P}_2(t)$, which shows that $P_{2}$ is defined pointwise. Similar argument is valid for $P_1$. 
   Notice that at the endpoints  $s=-\infty$ and $s=t$ there are singularities of $\psi(t,s)$ and $\phi(t,s)$, and therefore we approach to these integrals in the manner of the theory of improper Riemann integrals.
Now let
\begin{equation*}
    \tilde{\xi}(t)=\tilde{P}_1(t)+\tilde{P}_2(t),
\end{equation*}
where
  $$\tilde{P}_1(t)=-\int_{-\infty}^0\psi'(t,s)B(s)\,ds, \quad \tilde{P}_2(t)=\phi(t,0)B(t)+\int_0^t\phi'(t,s)(B(t)-B(s))\,ds.$$
  From the proof of Lemma~\ref{lem:PWZ_lemma} we see that $\xi(t)=\tilde{\xi}(t)$ a.s. for each $t\geq0$.
  Supposing that realizations of processes $\xi$ and $\tilde{\xi}$ are continuous a.s., from  \cite[page 57]{kallenberg2002} we gain that processes $\xi$ and $\tilde\xi$ are indistinguishable. 
  \end{remark}


\section{Proofs}\label{appendix:proofs}

\begin{proof}[Proof of Proposition~\ref{prop:I(t,y)}]
Notice that we may change the order of expectation and integration because the underlying functions are non-negative, and obtain
\begin{align*}
\e \left(\int_0^t \frac{Y(t)-Y(t-s)}{s}{\rm d}s \mid \tau(T) = t, \overline Y(T) = y\right) = \int_0^t \frac{\e\left(Y(t)-Y(t-s)\mid \tau(T) = t, \overline Y(T) = y\right)}{s}{\rm d}s.
\end{align*}
Furthermore, using the formula for the density of generalized Bessel bridge in \eqref{eq:W_dens} we find that
\begin{align*}
I(t,y) & = \int_0^t\int_0^\infty s^{-1}x\cdot g(x,s;t,y)\,{\rm d}x{\rm d}s \\
& = \frac{t^{3/2}}{y} \int_0^t\int_0^\infty\frac{x^2}{s^{5/2}\sqrt{2\pi(t-s)}}e^{-\frac{x^2}{2s}+\frac{y^2}{2t}}\cdot\left[e^{-\frac{(x-y)^2}{2(t-s)}} - e^{-\frac{(x+y)^2}{2(t-s)}}\right]\,{\rm d}x{\rm d}s \\
& = \frac{t^{3/2}}{\sqrt{2\pi}y} \int_0^t\int_0^\infty\frac{x^2}{s^{5/2}\sqrt{t-s}}\cdot\left[e^{-\tfrac{1}{2}\big(\tfrac{x}{\sigma_0}-y\mu_0\big)^2} - e^{-\tfrac{1}{2}\big(\tfrac{x}{\sigma_0}+y\mu_0\big)^2}\right]\,{\rm d}x{\rm d}s,
\end{align*}
where $\mu_0^2 := \frac{s}{(t-s)t}$, and $\sigma_0^2 := \frac{(t-s)s}{t}$. Applying substitution $x := \sigma_0 x$, and $s := tw$ afterwards yields
\begin{align*}
I(t,y) = \frac{t}{\sqrt{2\pi}y} \int_0^1\int_0^\infty\frac{(1-s)x^2}{s}\cdot\left[e^{-\tfrac{1}{2}(x-yq(s)/\sqrt{t})^2} - e^{-\tfrac{1}{2}(x+yq(s)/\sqrt{t})^2}\right]\,{\rm d}x{\rm d}s,
\end{align*}
where $q(s) := \sqrt{\frac{s}{1-s}}$. Substituting $q(s)$ yields the result.
\end{proof}

We introduce a simple lemma, which we give without proof.
\begin{lemma}\label{lem:log_behavior}
For every $n\in\Z_+$ and $\ep>0$ there exists $C>0$ such that $|\log^n(x)| \leq C(x^{\ep} + x^{-\ep})$ for all $x>0$.
\end{lemma}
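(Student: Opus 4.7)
The plan is to split the domain $x \in (0,\infty)$ into the two regions $[1,\infty)$ and $(0,1]$, handle them separately by standard asymptotics of $\log$ versus power functions, and then combine the two bounds.

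First I would handle $x \geq 1$. On this region $|\log^n(x)| = (\log x)^n \geq 0$ and $x^{\ep} \geq 1$, so it suffices to show $\sup_{x \geq 1} (\log x)^n / x^{\ep} < \infty$. By the standard fact that $\log x = o(x^\delta)$ for every $\delta > 0$ as $x \to \infty$, raised to the $n$-th power, we get $(\log x)^n / x^{\ep} \to 0$ as $x \to \infty$, and this ratio is continuous on $[1,\infty)$ (equal to $0$ at $x=1$), so it attains a finite maximum $C_1 := \sup_{x \geq 1}(\log x)^n / x^{\ep}$. Concretely, one can make this explicit by noting that $\log x \leq \tfrac{n}{e\ep} x^{\ep/n}$ for $x \geq 1$, so that $(\log x)^n \leq (n/(e\ep))^n x^{\ep}$, which gives an admissible $C_1$.

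Next I would handle $0 < x \leq 1$ by the substitution $y = 1/x$, under which $|\log^n(x)| = (\log y)^n$ and $x^{-\ep} = y^{\ep}$. Applying the bound from the first step to $y \geq 1$ gives $(\log y)^n \leq C_1 y^{\ep}$, i.e.\ $|\log^n(x)| \leq C_1 x^{-\ep}$ on $(0,1]$. Combining the two cases and using the obvious estimate $x^{\ep} \leq x^{\ep} + x^{-\ep}$ and $x^{-\ep} \leq x^{\ep} + x^{-\ep}$ yields
\begin{equation*}
|\log^n(x)| \leq C_1\bigl(x^{\ep} + x^{-\ep}\bigr), \quad x > 0,
\end{equation*}
so the claim holds with $C = C_1$.

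There is no real obstacle here; the only thing to watch is that the constant depends on both $n$ and $\ep$ (which is allowed by the statement), and that the symmetry $x \leftrightarrow 1/x$ makes the two half-proofs identical, so one could also present the proof in a single line by noting that $|\log x| = \max(\log(1/x), \log x) \leq \log(x + 1/x)$ and then invoking the $\log y \leq (n/(e\ep)) y^{\ep/n}$ bound once.
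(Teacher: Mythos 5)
Your proof is correct. Note that the paper states this lemma explicitly without proof (``a simple lemma, which we give without proof''), so there is no argument in the paper to compare against; your two-region split at $x=1$, the explicit bound $\log x \le \tfrac{n}{e\ep}x^{\ep/n}$ for $x\ge 1$, and the symmetry $x\mapsto 1/x$ constitute exactly the standard argument the authors presumably had in mind, and the constant's dependence on $n$ and $\ep$ is permitted by the statement. The only microscopic caveat is the degenerate case $n=0$ (allowed since $\Z_+$ here denotes the non-negative integers), where your explicit constant formula is vacuous but the claim is trivial because $x^{\ep}+x^{-\ep}\ge 2$.
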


\begin{proof}[Proof of Proposition~\ref{prop:PWZ_continuous}]
The law of iterated logarithm for Brownian motion implies that
\begin{equation}\label{eq:iterated_logarithm}
\limsup_{t\to\pm \infty} \frac{|B(t)|}{|t|^{1/2+\ep}} =0 \quad \text{ a.s.}
\end{equation}
for every $\ep>0$. Moreover, it is also known that paths of Brownian motion are H\"{o}lder continuous almost surely on any bounded time interval, that is, for any $\ep>0$ there exist finite random numbers $A^\ep_k, n\in\Z_+$ such that
\begin{equation}\label{eq:holder_continuity}
|B(t)-B(s)| \leq A^\ep_k|t-s|^{1/2-\ep} \text{ for all } t,s\in[-n,n] \ \text{ a.s.}
\end{equation}
From now on, let us fix a single trajectory of Brownian motion, which satisfies \eqref{eq:iterated_logarithm} and \eqref{eq:holder_continuity} for all $k\in\Z_+$ and for some $\ep>0$ chosen at the end. Let us fix $n\in\N$, $H'\in(0,1)$ and $t'\in \R$. Given the representation \eqref{def:PWZ}, it suffices to show that
\begin{align}
\label{eq:show_continuity_I}
& \lim_{(t,H)\to(t',H')} \int_{-\infty}^0 \Big[\frac{\partial}{\partial s}f^\n_H(t,s)\Big]B(s){\rm d}s = \int_{-\infty}^0 \Big[\frac{\partial}{\partial s}f^\n_{H'}(t',s)\Big]B(s){\rm d}s,\\
\label{eq:show_continuity_II}
& \lim_{(t,H)\to(t',H')} g_H^\n(t,0)B(t) = g_{H'}^\n(t',0)B(t'),\\
\label{eq:show_continuity_III}
& \lim_{(t,H)\to(t',H')} \int_0^t \Big[\frac{\partial}{\partial s}g_H^\n(t,s)\Big](B(t)-B(s)){\rm d}s = \int_0^{t'} \Big[\frac{\partial}{\partial s}g_{H'}^\n(t',s)\Big](B(t')-B(s)){\rm d}s.
\end{align}
Clearly, for each $s\in\R$ we have
\begin{align*}
\lim_{(t,H)\to(t',H')} \frac{\partial}{\partial s}f^\n_H(t,s) = \frac{\partial}{\partial s}f^\n_{H'}(t',s), \quad \text{and} \quad \lim_{(t,H)\to(t',H')} \frac{\partial}{\partial s}g^\n_H(t,s) = \frac{\partial}{\partial s}g^\n_{H'}(t',s),
\end{align*}
so \eqref{eq:show_continuity_II} is satisfied.

We will now show that an integrable majorant for \eqref{eq:show_continuity_I} exists in two domains: (i) $s\in[-s^*,0)$, and (ii) $s<-s^*$, where $s^* := \max\{1,t\}$. Consider case (i) $s\in[-s^*,0)$; we have
\begin{align*}
\left|\frac{\partial}{\partial s}f^\n_H(t,s)\right| & \leq n|\log^{n-1}(t-s)(t-s)^{H-\tfrac{3}{2}}| + |H-\tfrac{1}{2}|\cdot|\log^{n}(t-s)(t-s)^{H-\tfrac{3}{2}}| \\
& +  n|\log^{n-1}(-s)(-s)^{H-\tfrac{3}{2}}| + |H-\tfrac{1}{2}|\cdot|\log^{n}(-s)(-s)^{H-\tfrac{3}{2}}|.
\end{align*}
According to Lemma~\ref{lem:log_behavior}, for every $\delta>0$ and $(H,t)$ close enough to $(H',t')$, there exists $C_1,C_2>0$ such that
\begin{align*}
\left|\frac{\partial}{\partial s}f^\n_H(t,s)\right| \leq C_1 + C_2\left(|s|^{H'-3/2-\delta}\right)
\end{align*}
for all $-s^*<s<0$. Moreover, using \eqref{eq:holder_continuity} with $n=\lceil s^* \rceil$ we find that
\begin{align*}
\left|\frac{\partial}{\partial s}f^\n_H(t,s)B(s)\right| \leq \left(C_1+C_2|s|^{H'-3/2-\delta}\right)A^\ep_n|s|^{1/2-\ep}.
\end{align*}
The upper bound above is integrable over $s\in[-s^*,0)$ when $H'-\delta-\ep > 0$, which completes the proof of case (i). We now consider the case (ii) $s<-s^*$; we have
\begin{equation*}
\left|\frac{\partial}{\partial s}f^\n_H(t,s)\right| \leq n|F_H(t,s;n-1)| + (H-\tfrac{1}{2})|F_H(t,s;n)|,
\end{equation*}
where $F_H(t,s;k) := \log^k(t-s)(t-s)^{H-\tfrac{3}{2}}-\log^k(-s)(-s)^{H-\tfrac{3}{2}}$. Using the mean value theorem we find that for $s<-s^*\leq -t$ we have
\begin{align*}
|F_H(t,s;k)| & \leq t\sup_{\tilde s\in[-s,t-s]}\bigg|k\log^{k-1}(\tilde s)(\tilde s)^{H-5/2} + (H-\tfrac{3}{2})\log^k(\tilde s)(\tilde s)^{H-5/2}\bigg| \\
& \leq t \cdot \left(k\log^{k-1}(-2s)(-s)^{H-5/2} + |H-\tfrac{3}{2}|\cdot\log^k(-2s)(-s)^{H-5/2}\right).
\end{align*}
According to Lemma~\ref{lem:log_behavior}, for any $\delta>0$ and $H$ close enough to $H'$, the above is upper bounded by $C|s|^{H'+\delta-5/2}$ for some $C>0$ and all $|s|\geq s^*$. Using \eqref{eq:iterated_logarithm} we thus find that there exists some $C'>0$ such that
\begin{align*}
\left|\frac{\partial}{\partial s}f^\n_H(t,s)B(s)\right| \leq C'|s|^{H'-\delta-5/2}|s|^{1/2+\ep}.
\end{align*}
The upper bound above is an integrable function over $s<-s^*$ when $H'+\delta +\ep < 1$, which completes  the proof of case (ii).

It is left to show \eqref{eq:show_continuity_III}. Let $\eta>0$ and let $(H,t)$ be close enough to $(H',t')$ such that $|t'-t| < \eta/2$. We split the pre-limit integral \eqref{eq:show_continuity_II} into two domains (i) $s\in[0,t'-\eta]$, and (ii) $s\in[t'-\eta,t]$. It is clear that
\begin{align*}
\lim_{(t,H)\to(t',H')} \int_0^{t'-\eta} \Big[\frac{\partial}{\partial s}g_H^\n(t,s)\Big](B(t)-B(s)){\rm d}s = \int_0^{t'-\eta} \Big[\frac{\partial}{\partial s}g_{H'}^\n(t',s)\Big](B(t')-B(s)){\rm d}s
\end{align*}
for all $\eta$ small enough by the Lebesgue dominated convergence theorem with dominant being the supremum of $[\frac{\partial}{\partial s}g_H^\n(t,s)](B(t)-B(s))$ over $s\in[0,t'-\eta]$, which is uniformly bounded for all $(t,H)$ sufficiently close to $(t',H')$. It thus suffices to show that
\begin{align*}
\lim_{\eta\to 0}\lim_{(t,H)\to(t',H')} \int_{t'-\eta}^t \Big[\frac{\partial}{\partial s}g_H^\n(t,s)\Big](B(t)-B(s)){\rm d}s = 0.
\end{align*}
Since we have
\begin{align*}
\left|\frac{\partial}{\partial s}g_H^\n(t,s)\right| \leq n|\log^{n-1}(t-s)(t-s)^{H-\tfrac{3}{2}}| + |H-\tfrac{1}{2}|\cdot|\log^{n}(t-s)(t-s)^{H-\tfrac{3}{2}}|,
\end{align*}
then according to Lemma~\ref{lem:log_behavior}, for any $\ep>0$ we can find a constant $C$ such that
\begin{align*}
\left|\frac{\partial}{\partial s}g_H^\n(t,s)\right| \leq C|t-s|^{H-\tfrac{3}{2}-\ep}.
\end{align*}
The above combined with the H\"{o}lder continuity of the Brownian trajectory, with $n=\lceil t'\rceil+1$, for any $\delta>0$ there exists some constant $C'>0$ such that
\begin{align*}
\left|\Big[\frac{\partial}{\partial s}g_H^\n(t,s)\Big](B(t)-B(s))\right| \leq C'|t-s|^{H-\tfrac{3}{2}-\ep+1/2-\delta}.
\end{align*}
This gives us
\begin{align*}
\left|\int_{t'-\eta}^t \Big[\frac{\partial}{\partial s}g_H^\n(t,s)\Big](B(t)-B(s)){\rm d}s\right| \leq C'\int_0^{|t-t'|+\eta}s^{H-1-\ep-\delta}{\rm d}s = \frac{C'}{H-\ep-\delta}(2\eta)^{H-\ep-\delta}
\end{align*}
so after taking $\ep+\delta$ small enough, we find that the above converges to $0$, as $\eta\to0$.
\end{proof}

\begin{proof}[Proof of Proposition~\ref{prop:nth_derivative_limit_PWZ}]
Let us fix $n\in\N$, $H'\in(0,1)$, and $t\in \R_+$. Given the representation \eqref{def:PWZ}, it suffices to show that
\begin{align}
\label{eq:show_derivative_I}
& \lim_{\Delta\to0} \int_{-\infty}^0 \frac{\frac{\partial}{\partial s}f^\n_{H+\Delta}(t,s)-\frac{\partial}{\partial s}f^\n_H(t,s)}{\Delta} \cdot B(s){\rm d}s = \int_{-\infty}^0 \Big[\frac{\partial}{\partial s}f^{(n+1)}_{H'}(t',s)\Big]B(s){\rm d}s,\\
\label{eq:show_derivative_II}
& \lim_{\Delta\to0} \frac{\frac{\partial}{\partial s}g^\n_{H+\Delta}(t,0)-\frac{\partial}{\partial s}g^\n_H(t,0)}{\Delta} = g_{H}^{(n+1)}(t,0),\\
\label{eq:show_derivative_III}
\begin{split}
& \lim_{\Delta\to0} \int_0^t \frac{\frac{\partial}{\partial s}g^\n_{H+\Delta}(t,s)-\frac{\partial}{\partial s}g^\n_H(t,s)}{\Delta} \cdot (B(t)-B(s)){\rm d}s = \int_0^{t} \Big[\textstyle\frac{\partial}{\partial s}g_{H}^{(n+1)}(t,s)\Big](B(t)-B(s)){\rm d}s.
\end{split}
\end{align}
Clearly for each $s\in\R$ we have
\begin{align*}
& \lim_{\Delta\to0} \frac{\frac{\partial}{\partial s}f^\n_{H+\Delta}(t,s)-\frac{\partial}{\partial s}f^\n_H(t,s)}{\Delta} = \frac{\partial}{\partial s}f^{(n+1)}_{H}(t,s),\\
& \lim_{\Delta\to0} \frac{\frac{\partial}{\partial s}g^\n_{H+\Delta}(t,s)-\frac{\partial}{\partial s}g^\n_H(t,s)}{\Delta} = \frac{\partial}{\partial s}g^{(n+1)}_{H}(t,s),
\end{align*}
so \eqref{eq:show_derivative_II} is satisfied. Using the mean value theorem we find that, for every $s<0$ there exists $\tilde H_s\in[H,H']\cup[H',H]$ and for every $s>0$ there exists $\hat H_s\in[H,H']\cup[H',H]$ such that
\begin{equation}\label{eq:fg_mvt_H}
\begin{split}
& \frac{\big|\frac{\partial}{\partial s}f^\n_{H+\Delta}(t,s)-\frac{\partial}{\partial s}f^\n_H(t,s)\big|}{\Delta} = f^{(n+1)}_{\tilde H_s}(t,s), \frac{\big|\frac{\partial}{\partial s}g^\n_{H+\Delta}(t,s)-\frac{\partial}{\partial s}g^\n_H(t,s)\big|}{\Delta} = g^{(n+1)}_{\hat H_s}(t,s).
\end{split}
\end{equation}
Using the representation in \eqref{eq:fg_mvt_H}, the proofs of \eqref{eq:show_derivative_I} and \eqref{eq:show_derivative_III} are now analogous the the proofs of \eqref{eq:show_continuity_I} and  \eqref{eq:show_continuity_III} of Proposition~\ref{prop:PWZ_continuous}.
\end{proof}

\begin{proof}[Proof of Lemma~\ref{lem:Var(R_H(t,N))}]
Since $R_H(t;N)$ has stationary increments; cf. Proposition~\ref{prop:stationarity_of_linear_combination}, then $\e |R_H(t;N) - R_H(s;N)|^2 = \var R_H(|t-s|;N)$, so it suffices to bound the variance $\var R_H(t;N)$ for all $t>0$. We have
\begin{eqnarray*}
\lefteqn{R_H(t;N) = \int_{-\infty}^0 \left( f_H(1,s) - \sum_{n=0}^N \frac{f_{1/2}^\n(t,s)}{n!}(H-\tfrac{1}{2})^n \right) {\rm d}B(s)} \\
&&+ \int_0^t \left( g_H(t,s) - \sum_{n=0}^N \frac{g_{1/2}^\n(t,s)}{n!}(H-\tfrac{1}{2})^n \right) {\rm d}B(s).
\end{eqnarray*}
Thus, $\var \big(R_H(t;N)\big) = I_1 + I_2$, where
\begin{equation}\label{eq:I1I2_show_bound}
\begin{split}
I_1 & := \int_0^\infty \left[ f_H^\n(t,s) - \sum_{n=0}^{N-1} \frac{f_{1/2}^\n(t,s)}{n!}(H-\tfrac{1}{2})^n \right]^2 {\rm d}s\\
I_2 & := \int_0^1 \left[ g_H^\n(t,s) - \sum_{n=0}^{N-1} \frac{g_{1/2}^\n(t,s)}{n!}(H-\tfrac{1}{2})^n \right]^2 {\rm d}s.
\end{split}
\end{equation}
Now, let $\underline H := \min\{\tfrac{1}{2},H\}$, $\overline H := \max\{\tfrac{1}{2},H\}$. Using Taylor's theorem with Lagrange remainder we find that
\begin{equation*}
\begin{split}
I_1 \leq (H-\tfrac{1}{2})^{2N}\frac{1}{(N!)^2}\int_0^\infty \sup_{\tilde H\in\Hspan}\left(f_{\tilde H_s}^{(N)}(t,s)\right)^2{\rm d}s \\
I_2 \leq (H-\tfrac{1}{2})^{2N}\frac{1}{(N!)^2}\int_0^t \sup_{\tilde H\in\Hspan} \left(g_{\tilde H}^{(N)}(t,s)\right)^2{\rm d}s.
\end{split}
\end{equation*}
Consider the integral in the upper bound for $I_1$ above on two different domains: (i) $s\in(0,t]$, and (ii) $s\in(t,\infty)$. In case (i) we have
\begin{eqnarray*}
\lefteqn{
\sup_{\tilde H\in\Hspan}\left(f_{\tilde H_s}^{(N)}(1,s)\right)^2 = \sup_{\tilde H\in\Hspan}\left(\log(t+s)(t+s)^{\tilde H-\tfrac{1}{2}}-\log(s)s^{\tilde H-\tfrac{1}{2}}\right)^2{\rm d}s} \\
&& \leq 2\left(\sup_{\tilde H\in\Hspan}\log^{2N}(t+s)(t+s)^{2\overline H-1} + 2\sup_{\tilde H\in\Hspan} \log^{2N}(s)s^{2\underline H-1}\right).
\end{eqnarray*}
Now, using Lemma~\ref{lem:log_behavior} and the bound above, we find that for every $\ep>0$ there exists $C>0$ such that
\begin{eqnarray*}
\sup_{\tilde H\in\Hspan}\left(f_{\tilde H_s}^{(N)}(1,s)\right)^2 &\leq& C\left(\sup_{\tilde H\in\Hspan}s^{2H-1-\ep} + \sup_{\tilde H\in\Hspan}s^{2H-1+\ep}\right) \\
&\leq & 2C\left(s^{2\underline H-1-\ep} + s^{2\overline H-1+\ep}\right).
\end{eqnarray*}
This gives us that there exists some constant $C_*>0$ such that
\begin{align*}
\int_0^t \sup_{\tilde H\in\Hspan}\left(f_{\tilde H_s}^{(N)}(1,s)\right)^2{\rm d}s \leq C_* \left(t^{2\underline H-\ep} + t^{2\overline H+\ep}\right),
\end{align*}
which completes case (i). For case (ii), using the mean value theorem we find that
\begin{eqnarray*}
\lefteqn{f_{H}^{(N)}(t,s) \leq t\cdot \sup_{\tilde s\in[s,t+s]}\left|\frac{\partial}{\partial s}\log^N(s)s^{H-\tfrac{1}{2}}\right|_{s=\tilde s}} \\
&& \leq t\cdot \sup_{\tilde s\in[s,t+s]}|N\log^{N-1}(\tilde s)(\tilde s)^{H-\tfrac{3}{2}}| + t\cdot \sup_{\tilde s\in[s,t+s]}|(H-\tfrac{1}{2})\log^{N}(\tilde s)(\tilde s)^{H-\tfrac{3}{2}}|.
\end{eqnarray*}
Again, using Lemma~\ref{lem:log_behavior} and the bound above, we find that for every $\ep>0$ there exists $C>0$ such that
\begin{align*}
f_{H}^{(N)}(t,s) \leq Ct\cdot \left(\sup_{\tilde s\in[s,t+s]}(\tilde s)^{H-\tfrac{3}{2}-\tfrac{\ep}{2}} + \sup_{\tilde s\in[s,t+s]}(\tilde s)^{H-\tfrac{3}{2}+\tfrac{\ep}{2}}\right),
\end{align*}
so for sufficiently small $\ep$, the suprema
above are attained at $\tilde s = s$ and thus
\begin{align*}
f_{H}^{(N)}(t,s) \leq Ct(s^{H-\tfrac{3}{2}-\tfrac{\ep}{2}}+s^{H-\tfrac{3}{2}+\tfrac{\ep}{2}}).
\end{align*}
Furthermore, this gives us that there exists some constant $C_*>0$ such that
\begin{align*}
\int_{t}^\infty \sup_{\tilde H\in\Hspan}\left(f_{\tilde H_s}^{(N)}(1,s)\right)^2{\rm d}s & \leq C_* \sup_{\tilde H\in\Hspan} t^2(t^{2\tilde H-2-\ep} + t^{2\tilde H-2+\ep})\\
& \leq  2C_*(t^{2\underline H-\ep} + t^{2\overline H+\ep}),
\end{align*}
which concludes the proof of case (ii) and shows that there exists $C>0$ such that
\begin{align*}
I_1 \leq (H-\tfrac{1}{2})^{2N}\frac{C}{(N!)^2}(t^{2\underline H-\ep} + t^{2\overline H+\ep})
\end{align*}
for all $t>0$, $H\in\Hspan$. The upper bound for $I_2$ from \eqref{eq:I1I2_show_bound} can be found using analogous method, as the upper bound for $I_1$ in case (i) above.
\end{proof}

\begin{proof}[Proof of Proposition~\ref{prop:stationarity_of_linear_combination}]
Let $t,s\in\R_+$, then
\begin{align*}
\e (\eta(t)-\eta(s))^2 = \sum_{i=1}^k\sum_{j=1}^k a_i a_j\e \Big(X^{(n_i)}_{H_i}(t) - X^{(n_i)}_{H_i}(s)\Big)\Big(X^{(n_j)}_{H_j}(t) - X^{(n_j)}_{H_j}(s)\Big).
\end{align*}
Without the loss of generality, assume that $t>s\geq 0$. It suffices to show that each term of the double sum above depends only on $t-s$. Now, notice that for any $H\in(0,1)$, $n\in\Z_+$ and $t>s>0$ we have
\begin{equation*}
\begin{split}
X^\n_H(t)-X^\n_H(s) & = \int_{-\infty}^s \Big[g^\n_H(t,w) - g^\n_H(s,w) \Big]{\rm d}B(w) + \int_s^t g^\n_H(t,w) {\rm d}B(w),
\end{split}
\end{equation*}
thus, for any $H,H'\in(0,1)$, $n,n'\in\Z_+$, and $t>s$ we obtain
\begin{eqnarray*}
\lefteqn{\e \Big(X^{(n)}_{H}(t) - X^{(n)}_{H}(s)\Big)\Big(X^{(n')}_{H'}(t) - X^{(n')}_{H'}(s)\Big)} \\
&&= \int_{-\infty}^s \Big(g^\n_H(t,w)-g^\n_H(s,w)\Big)\Big(g^{(n')}_{H'}(t,w)-g^{(n')}_{H'}(s,w)\Big)\,{\rm d}w + \int_s^t g^\n_H(t,w)g^{(n')}_{H'}(t,w)\,{\rm d}w,
\end{eqnarray*}
After applying the substitution $z := w - s$ to both integrals above we find that
\begin{eqnarray*}
\lefteqn{\e \Big(X^{(n)}_{H}(t) - X^{(n)}_{H}(s)\Big)\Big(X^{(n')}_{H'}(t) - X^{(n')}_{H'}(s)\Big)} \\
&&= \int_{-\infty}^0 f^\n_H(t-s,z)f^{(n')}_{H'}(0,z)\,{\rm d}z + \int_0^{t-s} g^\n_H(t-s,z)g^{(n')}_{H'}(t-s,z)\,{\rm d}z,
\end{eqnarray*}
whose value depends only on $t-s$.
\end{proof}

\section{Auxiliary results on extremes of Gaussian processes}\label{appendix:auxiliary_results}

We present  some useful properties of extremes of Gaussian processes, which are also of independent interest.
\begin{proposition}\label{pit.ine}
Suppose that $0<H_1<H_2\le 1$.
\\ (i) For $T>0$, $a\in \R$ and $\alpha\in \R$
\[
\e \left( \exp\left( \alpha \left(\sup_{t\in [0,T]}  \sqrt{2}B_{H_2}(t)-at^{2H_2} \right)   \right) \right)
\le
\e \left( \exp\left( \alpha \left( \sup_{t\in [0,T^{H_2/H_1}]}  \sqrt{2}B_{H_1}(t)-at^{2H_1}   \right) \right) \right)
< \infty.
\]
(ii) For $a>1$ and $\alpha< a$
\[
\e \left( \exp\left( \alpha \left(\sup_{t\in [0,\infty)}  \sqrt{2}B_{H_2}(t)-at^{2H_2} \right)   \right) \right)
\le
\e \left( \exp\left( \alpha \left( \sup_{t\in [0,\infty)}  \sqrt{2}B_{H_1}(t)-at^{2H_1}   \right) \right) \right)
< \infty.
\]
\end{proposition}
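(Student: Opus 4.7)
The plan is to reduce both parts to a Slepian-type comparison after a deterministic time change. Set $p := H_1/H_2 \in (0, 1]$ and substitute $t = s^p$ inside the supremum on the left-hand side. Since $t^{2H_2} = s^{2H_1}$ and the substitution maps $s \in [0, T^{H_2/H_1}]$ bijectively onto $t \in [0, T]$, we obtain the pathwise identity
\begin{equation*}
\sup_{t \in [0, T]} \sqrt{2} B_{H_2}(t) - a t^{2H_2} = \sup_{s \in [0, T^{H_2/H_1}]} \sqrt{2} B_{H_2}(s^p) - a s^{2H_1},
\end{equation*}
and its analogue for $T = \infty$. The problem thus reduces to comparing the Gaussian processes $X(s) := \sqrt{2} B_{H_2}(s^p) - a s^{2H_1}$ and $Y(s) := \sqrt{2} B_{H_1}(s) - a s^{2H_1}$, which share the same mean function $-a s^{2H_1}$ and the same variance function $2 s^{2H_1}$.

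Next, using \eqref{eq:fbm_cov}, a direct computation shows that for $0 \le s \le t$,
\begin{equation*}
\mathrm{cov}(X(s), X(t)) - \mathrm{cov}(Y(s), Y(t)) = (t - s)^{2H_1} - (t^p - s^p)^{2H_2} \ge 0,
\end{equation*}
where the final inequality follows from the subadditivity $t^p \le s^p + (t - s)^p$ (a consequence of the concavity of $x \mapsto x^p$ on $[0, \infty)$ for $p \in (0, 1]$) after raising to the power $2H_2 = 2H_1/p$. I would then apply Slepian's inequality to any finite collection of sample points $s_1 < \dots < s_n$ (admissible because $X$ and $Y$ have matching means and variances, with $X$ having uniformly larger covariances) and promote it to the supremum by taking a countable dense subset of the time interval, using a.s.\ sample-path continuity of the PWZ modification (Proposition~\ref{prop:PWZ_continuous}) together with a.s.\ finiteness of the supremum (Proposition~\ref{prop:tau_uniform_tightness}). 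This yields the stochastic ordering $\sup_s X(s) \preceq_{\mathrm{st}} \sup_s Y(s)$. Since both suprema are non-negative (both processes start at $0$), the monotone function $u \mapsto e^{\alpha u}$ translates this ordering into the claimed inequality on exponential moments.

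Finiteness of the right-hand side closes the argument. In part (i) with $T^{H_2/H_1} < \infty$, Borell-TIS applied to the continuous Gaussian process $\sqrt{2} B_{H_1}$ on $[0, T^{H_2/H_1}]$ gives a Gaussian tail for the supremum, so all exponential moments are finite. The main obstacle lies in part (ii): verifying $\mathbb{E} \exp(\alpha \sup_{s \ge 0} Y(s)) < \infty$ for $\alpha < a$. By the self-similarity relation \eqref{eq:self_similarity} applied with scaling $c = a^{-1/H_1}$, this reduces to showing $\mathbb{E} \exp(\beta \sup_{s \ge 0}[\sqrt{2} B_{H_1}(s) - s^{2H_1}]) < \infty$ for $\beta = \alpha/a < 1$, i.e.\ the classical Piterbarg-constant integrability threshold. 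The sharp exponential tail needed here is obtained by a dyadic decomposition of $[0, \infty)$ (analogous to the proof of Proposition~\ref{prop:tau_uniform_tightness}) combined with Borell-TIS on each block, where the super-linear drift $-s^{2H_1}$ must be carefully balanced against the Gaussian fluctuations to yield exactly the rate that matches the hypothesis $\alpha < a$.
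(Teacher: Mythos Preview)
Your approach is essentially identical to the paper's: both perform the deterministic time change $t\mapsto t^{H_2/H_1}$ (you do it before comparing, the paper after), establish the covariance inequality via convexity/subadditivity of power functions (your subadditivity of $x^p$ for $p=H_1/H_2\le 1$ is the same fact as the paper's convexity of $x^{H_2/H_1}$), and invoke Slepian. For finiteness in part~(ii), the paper simply cites the H\"usler--Piterbarg large-deviation asymptotic
\[
\lim_{u\to\infty}\frac{1}{u}\log\p\Big(\sup_{t\ge 0}\sqrt{2}B_H(t)-at^{2H}>u\Big)=-a,
\]
which yields $\e e^{\alpha M}<\infty$ for all $\alpha<a$ immediately; your dyadic Borell--TIS route can be made to work but requires precisely the delicate optimization you flag, and is strictly harder than quoting the known result.

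One genuine gap: your sentence ``the monotone function $u\mapsto e^{\alpha u}$ translates this ordering into the claimed inequality on exponential moments'' only works for $\alpha\ge 0$. Slepian gives the stochastic dominance $\sup_s X(s)\preceq_{\rm st}\sup_s Y(s)$, which implies $\e f(\sup X)\le \e f(\sup Y)$ for \emph{nondecreasing} $f$; for $\alpha<0$ the function $u\mapsto e^{\alpha u}$ is decreasing and the inequality reverses. The paper's proof has exactly the same lacuna---it only derives the tail comparison $\p(\sup X>u)\le\p(\sup Y>u)$ and does not address negative $\alpha$. Since the proposition is applied in the paper only with $\alpha>0$ (in the proof of Theorem~\ref{thm:supremum_derivative_pickands_limit} and Corollary~\ref{pit.in}), the stated range ``$\alpha\in\R$'' in part~(i) appears to be an oversight; in any case you should restrict your claim to $\alpha\ge 0$.
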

\begin{proof}
We note that
$\var(B_{H_2}(t))=\var\left(B_{H_1}\left(t^{H_2/H_1} \right)\right)$
and
\begin{eqnarray}
\cov(B_{H_2}(s),B_{H_2}(t))
&=&
\frac{1}{2}\left( s^{2H_2}+t^{2H_2}-|s-t|^{2H_2}\right)\nonumber\\
&\ge&
\frac{1}{2}\left( s^{2H_2}+t^{2H_2}-|s^{H_2/H_1}-t^{H_2/H_1}|^{2H_1}\right)\label{i.conv}\\
&=&
\cov\left(B_{H_1}(s^{H_2/H_1}),B_{H_2}(t^{H_2/H_1})\right),\nonumber
\end{eqnarray}
where inequality (\ref{i.conv}) is due to the convexity of function $f(x)=x^{H_2/H_1}$
(recall that $H_1<H_2$).

{\underline{ Ad (i)}}. From Slepian inequality, we get for each $u\in \R$
\begin{eqnarray*}
\p\left(\sup_{t\in[0,T]} \sqrt{2}B_{H_2}(t)-at^{2H_2}>u\right)
&\le&
\p\left(\sup_{t\in[0,T]} \sqrt{2}B_{H_1}(t^{H_2/H_1})-at^{2H_2}>u\right)\\
&=&
\p\left(\sup_{t\in[0,T^{H_2/H_1}]} \sqrt{2}B_{H_1}(t)-at^{2H_1}>u\right).
\end{eqnarray*}
Using that by (2.3) in \cite{Adl90}, for $H=H_1,H_2$ and any $T>0$
\[
\lim_{u\to\infty}\frac{\log\left(    \p\left(\sup_{t\in[0,T]} \sqrt{2}B_{H}(t)-at^{2H}>u\right)   \right)}{u^2}=-\frac{1}{2T^{2H}},
\]
and hence for each $a\in \R$
\[
\e \left( \exp\left( \alpha \left(\sup_{t\in [0,T]}  \sqrt{2}B_{H}(t)-at^{2H} \right)   \right) \right)
<\infty,
\]
completes the proof of case {\it (i)}.

{\underline{ Ad (ii)}}.
In view
of Theorem 1 in \cite{HuP99}, for any $H\in(0,1)$
\[
\lim_{u\to\infty}\frac{\log\left(    \p\left(\sup_{t\in[0,T]} \sqrt{2}B_{H}(t)-at^{2H}>u\right)   \right)}{u}=-a.
\]
Thus, for all $\alpha < a$
\[
\e \left( \exp\left( \alpha \left( \sup_{t\in [0,\infty)}  \sqrt{2}B_{H_1}(t)-at^{2H_1}   \right) \right) \right)
< \infty.
\]
The rest of the proof follows by the same argument as the proof of {\it (i)}.
The proof is completed.
\end{proof}


\begin{lemma}\label{lem:moments_sup_stationary}
Let $W(t)$ be a centered Gaussian process with stationary increments, bounded sample paths a.s.
and $W(0)=0$ a.s. Then, for any $n\in\N$ there exists $C(n)>0$ such that
\[\e \sup_{t\in[0,T]} |W(t)|^n \leq
C(n) \cdot \left(T^{n+2}(\var W(1))^{n/2} + T^2\e\Big(\sup_{t\in[0,1]} W(t)\Big)^n\right).
\]
\end{lemma}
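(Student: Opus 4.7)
The plan is to partition $[0,T]$ into unit-length intervals and use stationary increments to reduce each piece to an $[0,1]$-estimate. Assume for the moment that $T\geq 1$ and set $N:=\lceil T\rceil$. Writing any $t\in[0,N]$ as $t=k+s$ with $k\in\{0,\ldots,N-1\}$ and $s\in[0,1]$, the triangle inequality yields
\[
\sup_{t\in[0,T]}|W(t)| \;\leq\; \max_{0\leq k\leq N-1}|W(k)| \;+\; \max_{0\leq k\leq N-1}\sup_{s\in[0,1]}\big|W(k+s)-W(k)\big|.
\]
Applying $(a+b)^n\leq 2^{n-1}(a^n+b^n)$ and bounding the maximum over $k$ by the sum over $k$ then reduces the problem to estimating $\sum_{k=0}^{N-1}\e|W(k)|^n$ and $\sum_{k=0}^{N-1}\e\sup_{s\in[0,1]}|W(k+s)-W(k)|^n$ separately.

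For the first sum I would exploit the telescoping decomposition $W(k)=\sum_{j=0}^{k-1}(W(j+1)-W(j))$. By stationary increments each summand is distributed as $W(1)$, so Minkowski's inequality in $L^2$ gives $\sqrt{\var W(k)}\leq k\sqrt{\var W(1)}$, i.e.\ $\var W(k)\leq k^2\var W(1)$. Since $W(k)$ is centered Gaussian, $\e|W(k)|^n = c_n(\var W(k))^{n/2}\leq c_n k^n(\var W(1))^{n/2}$ for a purely numerical constant $c_n$. Summing and using $\sum_{k=0}^{N-1}k^n = O(N^{n+1})$ together with $N\leq 2T$ for $T\geq 1$ bounds this sum by $C_1(n)\, T^{n+1}(\var W(1))^{n/2}\leq C_1(n)\, T^{n+2}(\var W(1))^{n/2}$.

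For the second sum, stationary increments give the distributional identity $\{W(k+s)-W(k)\}_{s\in[0,1]}\eqd\{W(s)\}_{s\in[0,1]}$, so each summand equals $\e\sup_{s\in[0,1]}|W(s)|^n$. The symmetry $\{-W(t)\}\eqd\{W(t)\}$ combined with the pointwise bound $\sup|W|\leq \sup W+\sup(-W)$ and $(a+b)^n\leq 2^{n-1}(a^n+b^n)$ yields
\[
\e\sup_{s\in[0,1]}|W(s)|^n \;\leq\; 2^{n}\,\e\Big(\sup_{s\in[0,1]}W(s)\Big)^n.
\]
Summing over $k<N$ produces a contribution of order $N\,\e(\sup W)^n \leq 2T\,\e(\sup W)^n \leq 2T^2\,\e(\sup W)^n$ since $T\geq 1$. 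Combining the two bounds gives the claim with a constant $C(n)$ depending only on $n$; the case $T<1$ is not really needed in the paper (the lemma is invoked at $T=1$ in the proof of Corollary~\ref{coro:J3}), but it may be handled by monotonicity in $T$.

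There is no genuine obstacle here: the argument is a standard ``chaining over integer grid points plus stationary increments'' trick. The only mild point worth noting is that the natural powers of $T$ produced by this approach are $T^{n+1}$ and $T^1$, which are dominated by the stated $T^{n+2}$ and $T^2$ precisely in the regime $T\geq 1$ used in the applications.
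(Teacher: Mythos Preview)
Your argument is correct and follows essentially the same route as the paper: partition $[0,T]$ into unit intervals, split $W(t)$ into $W(k)$ plus an increment, control $\var W(k)\le k^2\var W(1)$ via telescoping and the Gaussian moment formula, and use stationary increments plus symmetry to reduce the increment term to $\e(\sup_{[0,1]}W)^n$. The only cosmetic difference is the order in which you apply ``max $\le$ sum'' and the split $W(t)=W(k)+(W(t)-W(k))$; your observation that the natural powers are $T^{n+1}$ and $T$ (rather than $T^{n+2}$ and $T^2$) is also correct and matches what the paper's computation actually produces before the final, slightly wasteful, bound.
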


\begin{proof}[Proof of Lemma~\ref{lem:moments_sup_stationary}]
Without loss of generality, we suppose that $T\in \N$.
We note that
\begin{eqnarray*}
\e \sup_{t\in[0,T]} |W(t)|^n \leq T \sum_{i=0}^{T-1} \e \sup_{t\in[i,i+1]} |W(t)|^n.
\end{eqnarray*}
Then,  for $i=0,...,T-1$
\begin{eqnarray*}
\e \sup_{t\in[i,i+1]} |W(t)|^n
&\le&
2 \e \left( \sup_{t\in[i,i+1]} (W(t)-W(i))+W(i)\right) ^n\nonumber\\
&\le&
2\max\{1,2^{n-1}\} \e |W(i)|^n+ 2\max\{1,2^{n-1}\}\e \Big(\sup_{t\in[0,1]} W(t)\Big)^n.
\end{eqnarray*}
Moreover, for $i\ge 1$
\begin{eqnarray*}
\e (W(i))^2
&=&
\e \left(     \sum_{k=0}^{i-1}(W(k+1)-W(k))  \right)^2\nonumber\\
&\le&
i \sum_{k=0}^{i-1} \e (W(1))^2 =  \e (W(1))^2  i^2
\end{eqnarray*}
and since $\e |W(i)|^n = C_n \cdot (\e|W(i)|^2)^{n/2}$, where $C_n := \e|\mathcal N(0,1))|^n = \frac{2^{n/2}\Gamma((n+1)/2)}{\sqrt{\pi}}$, then
\begin{align*}
\e |W(i)|^n \leq C_n(\var W(1))^{n/2}i^n.
\end{align*}
Finally, we have
\begin{align*}
\e \sup_{t\in[0,T]} |W(t)|^n \leq 2\max\{1,2^{n-1}\}T \left(\sum_{i=1}^{T-1}\left( C_n(\var W(1))^{n/2}i^n+\e \Big(\sup_{t\in[0,1]} W(t)\Big)^n\right)\right),
\end{align*}
which completes the proof.
\end{proof}

\begin{lemma}\label{lem:sup_moments}
Let $\{W(t); t\in[0,1]\}$ be a centered Gaussian process. If there exist $A>0$, $H\in(0,1]$ such that $\e |W(t)-W(s)|^2\leq A|t-s|^{2H}$ for all $t,s\in[0,T]$, then for every $n\in\Z_+$ there exists a constant $C(n)>0$ such that
\begin{align*}
\e \sup_{t\in[0,1]} |W(t)|^n \leq  C(n) \cdot A^n(1 + \mu^{n-1}(H) + \mu^n(H))
\end{align*}
where $\mu(H) := \e \sup_{t\in[0,1]} B_H(t)$.
\end{lemma}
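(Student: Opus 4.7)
My approach chains two classical Gaussian tools: a Sudakov--Fernique comparison to dominate $\e \sup_t W(t)$ by its fractional-Brownian-motion counterpart, followed by Borell--TIS concentration to upgrade from the first moment to the general $n$-th moment. The natural comparator is $V(t) := \sqrt{A}\, B_H(t)$ on $[0,1]$; since
\[
\e(V(t)-V(s))^2 = A|t-s|^{2H} \geq \e(W(t)-W(s))^2,
\]
Sudakov--Fernique (see, e.g., \cite[Thm.~2.2.3]{Adl90}) yields
\[
\e \sup_{t \in [0,1]} W(t) \leq \e \sup_{t \in [0,1]} V(t) = \sqrt{A}\,\mu(H),
\]
and the same bound applies to $-W$. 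I work under the natural normalization $W(0)=0$ a.s., which is the setting in which the lemma is applied in Corollary~\ref{coro:J3} (where $W=R_H(\cdot;N)$ vanishes at $t=0$).

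Under this normalization, the pointwise variance bound $\var W(t) \leq A\,t^{2H} \leq A$ on $[0,1]$ supplies, via Borell--TIS,
\[
\p\Big(\big|\sup_{t\in[0,1]} W(t) - \e \sup_{t\in[0,1]} W(t)\big| > u\Big) \leq 2 e^{-u^2/(2A)},
\]
whose integration in $\e |Z|^n = n\int_0^\infty u^{n-1}\p(|Z|>u)\,{\rm d}u$ with $Z := \sup_t W - \e \sup_t W$ gives a centered $n$-th moment bound of order $c_n A^{n/2}$. Combining with Step~1 via $(\sup_t W)^n \leq 2^{n-1}(|Z|^n + (\e \sup_t W)^n)$, and repeating for $-W$ while using $\sup_t |W(t)| \leq \sup_t W(t) + \sup_t(-W(t))$, one arrives at a bound of the form $C_n A^{n/2}\big(1+\mu^n(H)\big)$. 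The packaging into the stated right-hand side $A^n(1+\mu^{n-1}(H)+\mu^n(H))$ is then a matter of elementary inequalities such as $(1+x)^n \leq 2^{n-1}(1+x^n)$ and absorbing the remaining power of $A$ into the constant in the regime in which the lemma is actually used (where $A$ is bounded on any compact subset of $\Hspan$, cf.~Lemma~\ref{lem:Var(R_H(t,N))}).

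\textbf{Main obstacle.} Neither Gaussian tool is deep; the genuine care is in the bookkeeping of constants and in the implicit hypothesis $W(0)=0$. Without such a normalization an additive constant random variable can be added to $W$ without altering its increments, which would make the stated bound false as written --- so one must either read the lemma as implicitly requiring $W(0)=0$ a.s.\ (as in the fBm-based applications of interest) or carry an additional $\e|W(0)|^n$ term on the right-hand side. A secondary subtlety is reconciling the $A^{n/2}$ produced by Sudakov--Fernique and Borell--TIS with the $A^n$ stated in the lemma; this is harmless provided the lemma is invoked in a regime where $A$ is bounded, which is indeed the case in Corollary~\ref{coro:J3}.
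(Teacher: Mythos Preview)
Your proposal is correct and follows essentially the same route as the paper: Sudakov--Fernique to control the first moment of the supremum by $\sqrt{A}\,\mu(H)$, then Borell--TIS concentration (with $\sigma^2 \le A$) to upgrade to higher moments, and finally elementary inequalities to package the result. The only cosmetic difference is that the paper applies the layer-cake formula $\e\sup|W|^n=\int_0^\infty \p(\sup|W|>u^{1/n})\,{\rm d}u$ directly and splits at $u=m^n$, whereas you center first and bound $\e|Z|^n$; both yield the same three-term structure $\sigma^n,\ \sigma m^{n-1},\ m^n$.

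Your two caveats are well taken and in fact expose genuine sloppiness in the paper. The implicit hypothesis $W(0)=0$ is indeed needed (and holds in the application to $R_H(\cdot;N)$), and the natural output of the argument is $A^{n/2}(1+\mu^{n-1}(H)+\mu^n(H))$, not $A^n$; the paper's own proof, with $\sigma\le\sqrt{A}$ and $m\le 2\sqrt{A}\,\mu(H)$, produces exactly $A^{n/2}$ as you observe. Since the lemma is only invoked in Corollary~\ref{coro:J3} where $A$ ranges over a bounded set, the discrepancy is immaterial for the paper's purposes.
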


\begin{proof}[Proof of Lemma~\ref{lem:sup_moments}]
When $n=1$ then the above holds by virtue of Sudakov-Fernique inequality with $C(1) = 2$, i.e. we have
\begin{align}\label{eq:sudakov-fernique_bound}
m := \e \sup_{t\in[0,1]} |W(t)| \leq \e \sup_{t\in[0,1]}\{W(t)\} + \e \sup_{t\in[0,1]}\{ -W(t)\} \leq 2C\mu(H).
\end{align}
Now, using Borell's inequality, with $\sigma^2 := \sup_{t\in[0,1]}\e W^2(t)$ we have
\begin{align*}
\e \sup_{t\in[0,1]} |W(t)|^n & = \int_0^\infty \p\left(\sup_{t\in[0,1]} |W(t)|^n > u\right){\rm d}u \\
& = \int_0^{m^n} \p\left(\sup_{t\in[0,1]} |W(t)| > u^{1/n}\right){\rm d}u + \int_{m^n}^\infty \p\left(\sup_{t\in[0,1]} |W(t)| > u^{1/n}\right){\rm d}u,
\end{align*}
where we used the substitution $x := (u^{1/p}-m)/\sigma$. The first term above is bounded by $m^n$. Using Borell inequality, we can bound the second term above with
\begin{align*}
\int_{m^p}^\infty \exp\left\{-\frac{(u^{1/n} - m)^2}{2\sigma^2}\right\}{\rm d}u & = n\sigma \int_0^\infty(\sigma x + m)^{n-1}e^{-x^2/2}{\rm d}u \\
& \leq  n\sigma \int_0^\infty(\max\{1,2^{n-2}\}((\sigma x)^{n-1} + m^{n-1})e^{-x^2/2}{\rm d}u \\
& \leq n(\sigma^n+\sigma m^{n-1})\int_0^\infty(x^{n-1} + 1)e^{-x^2/2}{\rm d}u.
\end{align*}
Since the integral above is finite and depends only on $n$, then the claim follows from \eqref{eq:sudakov-fernique_bound} and the fact that $\sigma \leq C$.
\end{proof}

\begin{lemma}\label{lem:moments_sup_random_time}
Let $W(t)$ be a centered Gaussian process with stationary increments and $\tau\geq0$ be a random time. If there exist $\gamma,\beta,C>0$ satisfying $\p(\tau > T) \leq Ce^{-\gamma T^\beta}$ for all $T>0$, then for each $n\in\N$ there exists $C'>0$ depending only on $n, \gamma,\beta,C$ such that
\[\e |W(\tau)|^n \leq C'\left((\var W(1))^{n/2} + \e\Big(\sup_{t\in[0,1]} W(t)\Big)^n\right).\]
\end{lemma}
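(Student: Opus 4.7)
The plan is to decompose $\e|W(\tau)|^n$ along the integer-indexed unit intervals of $\tau$, invoke the uniform tail bound on $\tau$ via Cauchy--Schwarz, and reduce the problem to Lemma~\ref{lem:moments_sup_stationary}.

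I start with
\begin{equation*}
\e|W(\tau)|^n \;=\; \sum_{k=0}^\infty \e\bigl[|W(\tau)|^n\,\ind{k\le \tau<k+1}\bigr] \;\le\; \sum_{k=0}^\infty \e\!\left[\sup_{t\in[0,k+1]}|W(t)|^n\cdot\ind{\tau\ge k}\right],
\end{equation*}
since $|W(\tau)|\le \sup_{t\in[0,k+1]}|W(t)|$ whenever $\tau<k+1$. Because the supremum and $\tau$ need not be independent, I then apply Cauchy--Schwarz:
\begin{equation*}
\e\!\left[\sup_{t\in[0,k+1]}|W(t)|^n\cdot\ind{\tau\ge k}\right] \;\le\; \Bigl(\e\sup_{t\in[0,k+1]}|W(t)|^{2n}\Bigr)^{\!1/2}\bigl(\p(\tau\ge k)\bigr)^{1/2}.
\end{equation*}

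Next, Lemma~\ref{lem:moments_sup_stationary} applied with exponent $2n$ and $T=k+1$, combined with the elementary inequality $\sqrt{a+b}\le\sqrt a+\sqrt b$, bounds the first factor by
\begin{equation*}
\tilde C\Bigl[(k+1)^{n+1}(\var W(1))^{n/2}+(k+1)\Bigl(\e\bigl(\sup_{t\in[0,1]}W(t)\bigr)^{2n}\Bigr)^{\!1/2}\Bigr],
\end{equation*}
while the hypothesis on $\tau$ gives $\p(\tau\ge k)^{1/2}\le \sqrt{C}\,e^{-\gamma k^\beta/2}$. Each summand is then at most (polynomial in $k$) times $e^{-\gamma k^\beta/2}$; since $\beta>0$ the series converges, yielding
\begin{equation*}
\e|W(\tau)|^n \;\le\; C_1(\var W(1))^{n/2}+C_2\Bigl(\e\bigl(\sup_{t\in[0,1]}W(t)\bigr)^{2n}\Bigr)^{\!1/2}.
\end{equation*}

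The main obstacle is the final step: converting the $L^{2n}$-norm of $M:=\sup_{t\in[0,1]}W(t)$ to its $L^n$-norm, as appears in the statement. By Borell--Sudakov--Tsirelson concentration of the Gaussian supremum, with $\sigma^2:=\sup_{t\in[0,1]}\var W(t)$ one has $(\e M^{2n})^{1/2}\le C_n\bigl((\e M)^n+\sigma^n\bigr)$. Jensen's inequality gives $(\e M)^n\le \e M^n$ (using $M\ge W(0)=0$), while $\sigma^n$ is controlled by $(\var W(1))^{n/2}$ — e.g.\ directly when $\var W(\cdot)$ is non-decreasing on $[0,1]$, as is the case for the stationary-increment processes to which the lemma is applied in the main proofs. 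Assembling these observations completes the proof.
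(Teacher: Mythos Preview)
Your argument follows the paper's proof essentially line by line: decompose over unit intervals, bound $|W(\tau)|$ by $\sup_{[0,k+1]}|W|$, apply Cauchy--Schwarz, invoke Lemma~\ref{lem:moments_sup_stationary} with exponent $2n$, and sum against the stretched-exponential tail. The paper stops at
\[
\e |W(\tau)|^n \leq C'\left((\var W(1))^{n/2} + \e\Big(\sup_{t\in[0,1]} W(t)\Big)^n\right)\cdot \sum_{k=0}^\infty (k+1)^{n+2}e^{-\gamma k^\beta},
\]
without commenting on how $\bigl(\e M^{2n}\bigr)^{1/2}$ becomes $\e M^{n}$; you are right to flag this as a genuine step and to supply it via Borell--TIS concentration plus Jensen. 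Your observation that this requires $\sup_{t\in[0,1]}\var W(t)\le C\,\var W(1)$ (which holds for all processes to which the lemma is applied, in particular $R_H(\cdot;N)$ via Lemma~\ref{lem:Var(R_H(t,N))}) is accurate; the paper's statement that $C'$ depends only on $n,\gamma,\beta,C$ tacitly assumes this. In short, your proof is the same as the paper's, only more scrupulous about the last reduction.
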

\begin{proof}
See that
\begin{align*}
\e |W(\tau)|^n & = \sum_{n=0}^\infty \e\Big(|W(\tau)|^n; \tau\in[k,k+1])\Big) \\
& \leq \sum_{k=0}^\infty \e\Big(\sup_{t\in[0,k+1]} |W(t)|^n; \tau\geq k)\Big) \\
& \leq \sum_{k=0}^\infty \Big[\e\Big(\sup_{t\in[0,k+1]} |W(t)|^{2n}\Big)\p\big(\tau\geq k)\big)\Big]^{1/2},
\end{align*}
where in the last line we used H\"{o}lder's inequality. Using Lemma~\ref{lem:moments_sup_stationary} we further find that there exists a constant $C'>0$ such that
\begin{align*}
\e |W(\tau)|^n \leq C'\left((\var W(1))^{n/2} + \e\Big(\sup_{t\in[0,1]} W(t)\Big)^n\right)\cdot \sum_{k=0}^\infty (k+1)^{n+2}e^{-\gamma k^\beta},
\end{align*}
which completes the proof because the sum above is finite.
\end{proof}


\section{Calculation of integrals in Corollary~\ref{cor:explicit_expressions}}\label{appendix:calculations}
In the proof of Corollary~\ref{cor:explicit_expressions} one needs to handle complicated integrals. For convenience we present all these calculations here.
\begin{proof}[Proof of Corollary~\ref{cor:explicit_expressions}(i).]
Since
\begin{align*}
p(t,y;T,0) = \frac{y \exp\{-y^2/(2t)\}}{\pi t^{3/2}\sqrt{T-t}},
\end{align*}
cf.~\eqref{eq:density:T} (see also \cite[2.1.13.4]{borodin2002handbook}), then according to Theorem~\ref{thm:supremum_derivative_limit} we have $\mathscr M'_{1/2}(T,0) = \mathcal I_1 - \mathcal I_2$, where
\begin{align*}
\mathcal I_1 := \int_0^T\int_0^\infty y(1+\log(t))\frac{y \exp\{-\frac{y^2}{2t}\}}{\pi t^{3/2}\sqrt{T-t}}{\rm d}y{\rm d}t, \quad \mathcal I_2 :=  \int_0^T\int_0^\infty I(t,y) \frac{y \exp\{-\frac{y^2}{2t}\}}{\pi t^{3/2}\sqrt{T-t}}{\rm d}y{\rm d}t.
\end{align*}
Till this end we show that
\begin{equation}\label{eq:I1I2_to_show_1}
\mathcal I_1 = \sqrt{\frac{2T}{\pi}} \cdot (\log(T) + 2\log(2) - 1), \quad \mathcal I_2 = \sqrt{\frac{2T}{\pi}} \left(1 + 2\log(2)\right),
\end{equation}
which completes the proof.
We first calculate $\mathcal I_1$. After substitution $y = \sqrt{t}z$ we have
\begin{align*}
\mathcal I_1 = \sqrt{\frac{2}{\pi}}\int_0^T \frac{1+\log(t)}{\sqrt{T-t}} {\rm d}t\int_0^\infty \frac{z^2}{\sqrt{2\pi}}e^{-\frac{z^2}{2}}{\rm d}z =\sqrt{\frac{2}{\pi}} \cdot \left(2\sqrt{T} + \int_0^T \frac{\log(t)}{\sqrt{T-t}} {\rm d}t\right) \cdot \frac{1}{2}.
\end{align*}
Now, using substitution $t = Ts$ we obtain
\begin{eqnarray*}
  \lefteqn{\int_0^T \frac{\log(t)}{\sqrt{T-t}} {\rm d}t}\\
  &&= \sqrt{T}\int_0^1\frac{\log(sT)}{\sqrt{1-t}}{\rm d}s = 2\sqrt{T}\log(T) + \sqrt{T}\int_0^1\frac{\log(s)}{\sqrt{1-s}}{\rm d}s = \sqrt{T}(2\log(T) + 4\log(2)-4),
\end{eqnarray*}
where we used that $\int_0^1\frac{x\log(x)}{\sqrt{1-x^2}}{\rm d}x = \log(2)-1$; see formula 4.241-2 in \cite{integral_table}. This leads to the formula for $\mathcal I_1$ in \eqref{eq:I1I2_to_show_1}.

We now calculate $\mathcal I_2$. Using e.g. \cite[5-(A6)]{ng1969table} we find that for any $b\in\R$ we have
\begin{align*}
\int_0^\infty x^2{\rm e}^{-\frac{(x+b)^2}{2}}{\rm d}x = -be^{-b^2/2} + (1+b^2)\sqrt{\pi/2}\cdot \erfc(b/\sqrt{2})
\end{align*}
so, noting that for any $x\in\R$ we have $\erfc(-x)-\erfc(x) = 2\erf(x)$ and any $b\in\R$
\begin{equation}\label{eq:integral_b}
\int_0^\infty x^2\left(e^{-\frac{(x-b)^2}{2}}-e^{-\frac{(x+b)^2}{2}}\right){\rm d}x = 2be^{-b^2/2} + (1+b^2)\sqrt{2\pi}\cdot \erf(b/\sqrt{2}).
\end{equation}
Now, see that
\begin{align*}
\mathcal I_2 :=  \int\limits_0^T\int\limits_0^\infty \int\limits_0^\infty\int\limits_0^\infty \frac{\sqrt{2}tx^2}{yq(1+q^2)^2\sqrt{\pi}}\left(e^{-(x-yq/\sqrt{t})^2/2} - e^{-(x+yq/\sqrt{t})^2/2}\right) \frac{y \exp\{-\frac{y^2}{2t}\}}{\pi t^{3/2}\sqrt{T-t}}{\rm d}x{\rm d}q{\rm d}y{\rm d}t.
\end{align*}
The substitution $y = \sqrt{t}z$ yields
\begin{align*}
\mathcal I_2 :=  \frac{\sqrt{2}}{\pi^{3/2}}\int\limits_0^T \frac{1}{\sqrt{T-t}}{\rm d}t \cdot \int\limits_0^\infty \int\limits_0^\infty\int\limits_0^\infty \frac{x^2}{q(1+q^2)^2}\left(e^{-\frac{(x-zq)^2}{2}} - e^{-\frac{(x+zq)^2}{2}}\right) e^{-\frac{z^2}{2}}{\rm d}x{\rm d}q{\rm d}z.
\end{align*}
Using \eqref{eq:integral_b} we obtain
\begin{align*}
\int_0^\infty x^2\left(e^{-\frac{(x-zq)^2}{2}} - e^{-\frac{(x+zq)^2}{2}}\right) {\rm d}x = 2zqe^{-z^2q/2} + (1+z^2q^2)\sqrt{2\pi}\cdot \erf(zq/\sqrt{2})
\end{align*}
and thus
\begin{align*}
\mathcal I_2 :=  \sqrt{\frac{2T}{\pi}}\cdot \frac{2}{\pi}\int\limits_0^\infty \frac{1}{q(1+q^2)^2}(J_1(q) + J_2(q) + J_3(q)){\rm d}q,
\end{align*}
where using the tables of integrals \cite[4.3.4 and 4.3.6]{ng1969table} and the fact that $\pi - 2\arctan(1/x) = 2\arctan(x)$, we find that
\begin{align*}
J_1(q) & := 2q\int_0^\infty ze^{-z^2(1+q^2)/2}{\rm d}z = \frac{2q}{1+q^2},\\
J_2(q) & := \sqrt{2\pi}\int_0^\infty\erf(zq/\sqrt{2})e^{-z^2/2}{\rm d}z = 2\arctan(q),\\
J_3(q) & := q^2\sqrt{2\pi}\int_0^\infty z^2\erf(zq/\sqrt{2}) e^{-\frac{z^2}{2}}{\rm d}z = 2q^2\left(\arctan(q) + \frac{q}{1+q^2}\right).
\end{align*}
Finally, we have
\begin{align*}
\int_0^\infty \frac{J_1(q)}{q(1+q^2)^2}{\rm d}q & = 2\int_0^\infty \frac{1}{(1+q^2)^3}{\rm d}q \\
\int_0^\infty \frac{J_2(q)}{q(1+q^2)^2}{\rm d}q & = 2\int_0^\infty \frac{\arctan(q)}{q(1+q^2)^2}{\rm d}q\\
\int_0^\infty \frac{J_3(q)}{q(1+q^2)^2}{\rm d}q & = 2\int_0^\infty \frac{q\arctan(q)}{(1+q^2)^2}{\rm d}q + 2\int_0^\infty \frac{q^2}{(1+q^2)^3}{\rm d}q.
\end{align*}
We now use $\int_0^\infty \frac{q\arctan(q)}{(1+q^2)^2}{\rm d}q = \frac{\pi}{8}$, see e.g. formula 4.535-11 in \cite{integral_table}
and the identity
\begin{equation}\label{eq:rational_function_identity}
\frac{1}{q(1+q^2)^2} = \frac{1}{q(1+q^2)} - \frac{q}{(1+q^2)^2},
\end{equation}
to working out integral  $\int_0^\infty \frac{\arctan(q)}{q(1+q^2)^2}{\rm d}q$ with the use of formulas 4.531-7, and 4.535-11 in \cite{integral_table}. After the above calculations, we arrive at the formula for $\mathcal I_2$ in \eqref{eq:I1I2_to_show_1}. This completes the proof.
\end{proof}

\begin{proof}[Proof of Corollary~\ref{cor:explicit_expressions}(ii).]
Since, for $a>0$ we have
\begin{align*}
p(t,y,\infty,a) = \frac{\sqrt{2}\,ay\exp\left\{-\tfrac{(y+ta)^2}{2t}\right\}}{t^{3/2}\sqrt{\pi}},
\end{align*}
see \eqref{eq:density:infty}, then according to Theorem~\ref{thm:supremum_derivative_limit} we have $\mathscr M'_{1/2}(T,0) = \mathcal I_1 - \mathcal I_2$, where
\begin{align*}
\mathcal I_1 &:= \int_0^\infty\int_0^\infty (y(1+\log(t))+at\log(t))\frac{\sqrt{2}\,ay\exp\left\{-\tfrac{(y+ta)^2}{2t}\right\}}{t^{3/2}\sqrt{\pi}}{\rm d}y{\rm d}t, \\
\mathcal I_2 &:=  \int_0^\infty\int_0^\infty I(t,y) \frac{\sqrt{2}\,ay\exp\left\{-\tfrac{(y+ta)^2}{2t}\right\}}{t^{3/2}\sqrt{\pi}}{\rm d}y{\rm d}t.
\end{align*}
At the end, we show that
\begin{equation}\label{eq:I1I2_to_show_2}
\mathcal I_1 = \frac{3}{2a} - \frac{1}{a}\left(\gamma_{\rm E} - \log(2a^2)\right), \quad \mathcal I_2 = \frac{3}{2a},
\end{equation}
which yields the result. First we calculate $\mathcal I_1$. After substitution $y = tz$, we obtain
\begin{align*}
\mathcal I_1 &:= \sqrt{\frac{2}{\pi}} \cdot a \int_0^\infty\int_0^\infty (z+(z+a)\log(t))\cdot z t^{3/2}\exp\left\{-\tfrac{(z+a)^2}{2} \cdot t\right\}{\rm d}t{\rm d}z \\
& = \sqrt{\frac{2}{\pi}} \cdot a \int_0^\infty \left(z^2 J_1(z) + z(z+a)J_2(z)\right){\rm d}z,
\end{align*}
where
\begin{align*}
 J_1(z) := \int_0^\infty t^{3/2}\exp\left\{-t\cdot \tfrac{(z+a)^2}{2}\right\}{\rm d}t, \quad J_2(z) := \int_0^\infty \log(t)\cdot t^{3/2}\exp\left\{-t\cdot \tfrac{(z+a)^2}{2}\right\}{\rm d}t.
\end{align*}
The integrals $J_1$ and $J_2$ can be calculated explicitly using formulas 3.381-4 and 4.352-1 in \cite{integral_table} respectively and after some algebraic manipulations we arrive at
\begin{align*}
\mathcal I_1 = \sqrt{\frac{2}{\pi}} \cdot a \int_0^\infty \left(\frac{3z^2\sqrt{2\pi}}{(a+z)^5} -\frac{\sqrt{2\pi}z}{(a + z)^4} \cdot \left(-8 + 3\gamma_{\rm E} + \log(8) + 6\log(a+z)\right)\right){\rm d}z.
\end{align*}
The calculation of the integrals of rational functions above is standard. It is left to find the integral of the product of rational function and logarithm, i.e.
\begin{align*}
\int_0^\infty \frac{z}{(a+z)^4}\log(a+z){\rm d}z = a^{-2}\int_0^\infty \frac{y}{(1+y)^4}\left(\log(a) + \log(1+y)\right){\rm d}y
\end{align*}
where we used substitution $z = ay$. Again, the integral of the rational function can be found, in particular, we have $\int_0^\infty \frac{y}{(1+y)^4}{\rm d}y = \tfrac{1}{6}$, so it is left to find
\begin{align*}
\int_0^\infty \frac{y\log(1+y)}{(1+y)^4}{\rm d}y = \int_0^1 w(1-w)\log(w){\rm d}w = -\frac{5}{36},
\end{align*}
where we substituted $w = (1+y)^{-1}$ and used formula 4.253-1 in \cite{integral_table}. Finally, simple algebraic manipulations yield the value of $\mathcal I_1$ in \eqref{eq:I1I2_to_show_2}.

We now calculate the value of $\mathcal I_2$. After substitution $y = tz$ and $x=\sqrt{t}w$ we obtain:
\begin{align*}
\mathcal I_2 = \frac{2a}{\pi}\int_0^\infty\int_0^\infty\int_0^\infty \frac{w^2(J_{1}(w,z,q)-J_{2}(w,z,q))}{q(1+q^2)^2} {\rm d}w{\rm d}z{\rm d}q,
\end{align*}
where
\begin{align*}
J_{1}(w,z,q) = \int_0^\infty t^2 \cdot e^{-t\cdot\frac{(a+z)^2 + (w-qz)^2}{2}}{\rm d}t, \quad J_{2}(w,z,q) = \int_0^\infty t^2 \cdot e^{-t\cdot\frac{(a+z)^2 + (w+qz)^2}{2}}{\rm d}t.
\end{align*}
Since $\int_0^\infty t^2e^{-\mu t} = \mu^{-3}\Gamma(3)$, then we obtain
\begin{align*}
\mathcal I_2 =\frac{32a}{\pi} \int_0^\infty\int_0^\infty\int_0^\infty \frac{J_1'(z,q) - J_2'(z,q)}{q(1+q^2)^2}{\rm d}z{\rm d}q,
\end{align*}
where
\begin{align*}
J'_{1}(z,q) = \int_0^\infty \frac{w^2}{((a+z)^2+(w-qz)^2)^3}{\rm d}w,  \quad J'_{2}(z,q) = \int_0^\infty \frac{w^2}{((a+z)^2+(w+qz)^2)^3}{\rm d}w.
\end{align*}
Functions $J_1'(z,q)$ and $J_2'(z,q)$ are definite integrals of rational functions, so it is well-known how to find them. Below, we write the explicit formula for $\mathcal I_2$
\begin{align*}
\mathcal I_2 = \frac{8a}{\pi}\int_0^\infty\int_0^\infty\frac{3qz(a+z) + ((a+z)^2 + 3q^2z^2)\arctan(\tfrac{qz}{a+z})}{q(1+q^2)^2 (a+z)^5}{\rm d}q{\rm d}z.
\end{align*}
After substitution $z = ay$ we obtain
\begin{align*}
\mathcal I_2 = \frac{8}{\pi a}\int_0^\infty\int_0^\infty\frac{3qy(1+y) + ((1+y)^2 + 3q^2y^2)\arctan(\tfrac{qy}{1+y})}{q(1+q^2)^2(1+y)^5}{\rm d}q{\rm d}y,
\end{align*}
and further substituting $z = \frac{y}{1+y}$ we find that
\begin{align*}
\mathcal I_2 = \frac{8}{\pi a}\int_0^1(1-z)\cdot \left(I_1(z) + I_2(z) + I_3(z)\right){\rm d}z,
\end{align*}
where
\begin{align*}
I_1(z) & := 3z\int_0^\infty \frac{1}{(1+q^2)^2}{\rm d}q = \frac{\pi}{4} \cdot 3z,\\
I_2(z) & := \int_0^\infty \frac{\arctan(qz)}{q(1+q^2)^2}{\rm d}q = \frac{\pi}{4} \cdot \left(2\log(1+z) - \frac{z}{1+z}\right),\\
I_3(z) & := 3z^2\int_0^\infty \frac{q\arctan(qz)}{(1+q^2)^2}{\rm d}q = \frac{\pi}{4} \cdot \frac{3z^3}{1+z},
\end{align*}
where the formula for $I_2(z)$ was found using \eqref{eq:rational_function_identity} and formulas 4.535-7 and 4.535-11 in \cite{integral_table}, while $I_3(z)$ follows from 4.535-11 in \cite{integral_table}. This gives us
\begin{align*}
\mathcal I_2 & = \frac{2}{a} \int_0^1 \left(3z(1-z) + 2(1-z)\log(1+z) + \frac{z(1-z)(3z^2-1)}{1+z}\right){\rm d}z\\
& = \frac{2}{a} \cdot \left(\frac{1}{2} +\log(16)-\frac{5}{2} + \frac{11}{4}-\log(16)\right) = \frac{3}{2a},
\end{align*}
which completes the proof.
\end{proof}

\begin{proof}[Proof of Corollary~\ref{cor:explicit_expressions}(iii).]
The following calculations are very similar to the ones did in the proof of Corollary~\ref{cor:explicit_expressions}(ii). For $a>1$ we have
\begin{align*}
p(t,y,\infty,a/\sqrt{2}) = \frac{ay\exp\left\{-\tfrac{(y+ta/\sqrt{2})^2}{2t}\right\}}{t^{3/2}\sqrt{\pi}},
\end{align*}
see \eqref{eq:density:infty}. Therefore, according to Theorem~\ref{thm:supremum_derivative_limit} we have $\mathscr P'_{1/2}(T,0) = \mathcal I_1 - \mathcal I_2$, where
\begin{align*}
\mathcal I_1 &:=\sqrt{\frac{2}{\pi}} \cdot a\int_0^\infty\int_0^\infty (y(1+\log(t))-\tfrac{a}{\sqrt{2}}t\log(t))e^{\sqrt{2}y}\cdot\frac{y\exp\left\{-\tfrac{(y+ta/\sqrt{2})^2}{2t}\right\}}{t^{3/2}\sqrt{\pi}}{\rm d}y{\rm d}t, \\
\mathcal I_2 &:=  \sqrt{\frac{2}{\pi}} \cdot a\int_0^\infty\int_0^\infty I(t,y)e^{\sqrt{2}y}\cdot \frac{y\exp\left\{-\tfrac{(y+ta/\sqrt{2})^2}{2t}\right\}}{t^{3/2}}{\rm d}y{\rm d}t.
\end{align*}
In the following $\acoth(\cdot)$ is the inverse hyperbolic cotangent function, i.e. for $|z|>1$
\begin{align*}
\acoth(z) = \frac{1}{2}\Big(\log\big(1+\tfrac{1}{z}\big) - \log\big(1-\tfrac{1}{z}\big)\Big) = \frac{1}{2}\log\big(\tfrac{z+1}{z-1}\big).
\end{align*}
Till this end, we show that
\begin{equation}\label{eq:I1I2_to_show_3}
\mathcal I_1 = \frac{a\left(3 - 2a - 4(a-1)^2\acoth(1-2a)\right)}{(a-1)^2}, \quad \mathcal I_2 = \frac{a\left(1 - 4(a-1)\acoth(1-2a)\right)}{(a-1)^2}
\end{equation}
which yields the result. First we calculate $\mathcal I_1$. After substitution $y = tz$, we obtain
\begin{align*}
\mathcal I_1 &:=\sqrt{\frac{2}{\pi}} \cdot a \int_0^\infty\int_0^\infty (z+(z-\tfrac{a}{\sqrt{2}})\log(t))\cdot z t^{3/2}\exp\left\{-\tfrac{(z+a/\sqrt{2})^2-2\sqrt{2}z}{2} \cdot t\right\}{\rm d}t{\rm d}z.
\end{align*}
Define $z_1^2 := (z+\frac{a}{\sqrt{2}})^2 - 2\sqrt{2}z = (z+\frac{a-2}{\sqrt{2}})^2 + 2(a-1)$. We have
\begin{align*}
\mathcal I_1 =\sqrt{\frac{2}{\pi}} \cdot a \int_0^\infty \left(z^2 J_1(z) + z(z-\tfrac{a}{\sqrt{2}})J_2(z)\right){\rm d}z,
\end{align*}
where
\begin{align*}
 J_1(z) := \int_0^\infty t^{3/2}e^{-t\cdot z_1^2/2}{\rm d}t, \quad J_2(z) := \int_0^\infty \log(t)\cdot t^{3/2}e^{-t\cdot z_1^2/2}{\rm d}t.
\end{align*}
The integrals $J_1$ and $J_2$ can be calculated explicitly using the formulas 3.381-4 and 4.352-1 in \cite{integral_table}, which gives us
\begin{align*}
\mathcal I_1 = 2a\int_0^\infty \left(\frac{z^2(11-(3\gamma_{\rm E} + 3\log(2) + 6\log(z_1)))}{z_1^5} - \frac{az(8-(3\gamma_{\rm E} + 3\log(2) + 6\log(z_1)))}{\sqrt{2}z_1^5}\right){\rm d}z.
\end{align*}
Finally, we calculate the quantity above using the definite integrals below
\begin{align*}
\int_0^\infty \frac{z}{z_1^5}{\rm d}z & = \frac{1}{3\sqrt{2}(a-1)^2a} \\
\int_0^\infty \frac{z^2}{z_1^5}{\rm d}z & = \frac{1}{6(a-1)^2}\\
\int_0^\infty \frac{z\log(z_1)}{z_1^5}{\rm d}z & = \frac{8-3\log(2) - 3a+3(a-2)a\log\big(\tfrac{a}{a-1}\big)+6\log(a)}{18\sqrt{2}(a-1)^2a}\\
\int_0^\infty \frac{z^2\log(z_1)}{z_1^5}{\rm d}z & = \frac{4+6a-6\log(2)+6(2+(a-2)a)\log(a-1) - 6(a-2)a\log(a)}{72(a-1)^2},
\end{align*}
which, after algebraic manipulations yields the formula for $\mathcal I_1$ in \eqref{eq:I1I2_to_show_3}.

We now proceed to calculate the value of $\mathcal I_2$. After substitution $y = tz$ and $x=\sqrt{t}w$ we obtain:
\begin{align*}
\mathcal I_2 = \frac{2a}{\pi}\int_0^\infty\int_0^\infty\int_0^\infty \frac{w^2(J_{1}(w,z,q)-J_{2}(w,z,q))}{q(1+q^2)^2} {\rm d}w{\rm d}z{\rm d}q,
\end{align*}
where
\begin{align*}
J_{1}(w,z,q) = \int_0^\infty t^2 \cdot e^{-t\cdot\frac{\left(z+\tfrac{a}{\sqrt{2}}\right)^2 -2\sqrt{2}z + (w-qz)^2}{2}}{\rm d}t, \quad J_{2}(w,z,q) = \int_0^\infty t^2 \cdot e^{-t\cdot\frac{\left(z+\tfrac{a}{\sqrt{2}}\right)^2-2\sqrt{2}z + (w+qz)^2}{2}}{\rm d}t.
\end{align*}
Since $\int_0^\infty t^2e^{-\mu t} = \mu^{-3}\Gamma(3)$, then
\begin{align*}
\mathcal I_2 =\frac{32a}{\pi} \int_0^\infty\int_0^\infty\int_0^\infty \frac{J_1'(z,q) - J_2'(z,q)}{q(1+q^2)^2}{\rm d}z{\rm d}q,
\end{align*}
where
\begin{align*}
J'_{1}(z,q) = \int_0^\infty \frac{w^2}{(z_1^2+(w-qz)^2)^3}{\rm d}w,  \quad J'_{2}(z,q) = \int_0^\infty \frac{w^2}{(z_1^2+(w+qz)^2)^3}{\rm d}w,
\end{align*}
with $z_1^2 := (z+\frac{a}{\sqrt{2}})^2 - 2\sqrt{2}z = (z+\frac{a-2}{\sqrt{2}})^2 + 2(a-1)$. Functions $J_1'(z,q)$ and $J_2'(z,q)$ are definite integrals of rational functions, so it is well-known how to find them. Now, we write
\begin{align*}
\mathcal I_2 = \frac{8a}{\pi}\int_0^\infty\int_0^\infty\frac{3qz z_1 + (z_1^2 + 3q^2z^2)\arctan(\tfrac{qz}{z_1})}{q(1+q^2)^2 z_1^5}{\rm d}q{\rm d}z.
\end{align*}
We thus have
\begin{align*}
\mathcal I_2 = \frac{8a}{\pi}\int_0^\infty \frac{I_1(z)+I_2(z)+I_3(z)}{z_1^5}{\rm d}z,
\end{align*}
where
\begin{align*}
I_1(z) & := 3zz_1\int_0^\infty \frac{1}{(1+q^2)^2}{\rm d}q = \frac{\pi}{4} \cdot 3zz_1,\\
I_2(z) & := z_1^2\int_0^\infty \frac{\arctan(\tfrac{qz}{z_1})}{q(1+q^2)^2}{\rm d}q = \frac{\pi}{4} \cdot z_1^2\left(2\log(1+\tfrac{z}{z_1}) - \frac{z}{z+z_1}\right)\\
I_3(z) & := 3z^2\int_0^\infty \frac{q\arctan(\tfrac{qz}{z_1})}{(1+q^2)^2}{\rm d}q = \frac{\pi}{4} \cdot \frac{3z^3}{z+z_1}
\end{align*}
where the formula from $I_2(z)$ follows from \eqref{eq:rational_function_identity} and formulas 4.535-7 and 4.535-11 in \cite{integral_table}, while $I_3(z)$ was found using formula 4.535-11. This gives us
\begin{align*}
\mathcal I_2 & = 2a \cdot \left(3\int_0^\infty\frac{z}{z_1^4}{\rm d}z + 2\int_0^\infty \frac{\log(1+\tfrac{z}{z_1})}{z_1^3}{\rm d}z - \int_0^\infty \frac{z}{z_1^3(z+z_1)}{\rm d}z + 3\int_0^\infty  \frac{z^3}{z_1^5(z+z_1)}{\rm d}z \right).
\end{align*}
Finally, we calculate the quantity above using the definite integrals below
\begin{align*}
\int_0^\infty\frac{z}{z_1^4}{\rm d}z & = \frac{2\sqrt{a-1}+(a-2)\left(\atan\big(\tfrac{a-2}{2\sqrt{a-1}}\big)-\tfrac{\pi}{2}\right)}{8(a-1)^{3/2}}\\
\int_0^\infty \frac{\log(1+\tfrac{z}{z_1})}{z_1^3}{\rm d}z & = \frac{-a+4(a-1)\log(2) + (a-2)\sqrt{a-1}\left(\atan\big(\tfrac{a-2}{2\sqrt{a-1}}\big)-\tfrac{\pi}{2}\right)-(a-2)^2\acoth(1-2a)}{(a-1)a^2}\\
\int_0^\infty \frac{z}{z_1^3(z+z_1)}{\rm d}z & = \frac{a - (a-2)\sqrt{a-1}\left(\atan\big(\tfrac{a-2}{2\sqrt{a-1}}\big)-\tfrac{\pi}{2}\right) - 4(a-1)\Big(\acoth(1-2a)+\log(2)\Big)}{(a-1)a^2}\\
\int_0^\infty  \frac{z^3}{z_1^5(z+z_1)}{\rm d}z & = \frac{2a(-3a^2+17a-12) - 3(a-2)\sqrt{a-1}(a^2+8a-8)\left(\atan\big(\tfrac{a-2}{2\sqrt{a-1}}\big) - \tfrac{\pi}{2} \right)}{24(a-1)^{2}a^2} \\
& \quad\quad -\frac{4\Big(\acoth(1-2a) + \log(2)\Big)}{a^2},
\end{align*}
which, after some algebraic manipulations yields \eqref{eq:I1I2_to_show_3} for $\mathcal I_2$. This completes the proof.
\end{proof}


\section*{Acknowledgements}

KB's research was funded by SNSF Grant 200021-196888. KD and TR were partially supported by NCN Grant No 2018/31/B/ST1/00370
(2019-2022).

\medskip
\bibliographystyle{plain}

\end{document}